\newcolumntype{L}[1]{>{\raggedright\let\newline\\\arraybackslash\hspace{0pt}}m{#1}}
\newcolumntype{C}[1]{>{\centering\let\newline\\\arraybackslash\hspace{0pt}}m{#1}}
\newcolumntype{R}[1]{>{\raggedleft\let\newline\\\arraybackslash\hspace{0pt}}m{#1}}
\newtheorem{Theorem}{Theorem}[section]
\newtheorem{Proposition}[Theorem]{Proposition}
\newtheorem{Remark}[Theorem]{Remark}
\newtheorem{Lemma}[Theorem]{Lemma}
\newtheorem{Definition}[Theorem]{Definition}
\let\expandafter\oldproof\csname\string\proof\endcsname
\let\oldendproof\endproof
\renewenvironment{proof}[1][\proofname]{
\oldproof[\ttfamily\scshape \bf #1.]
}{\oldendproof}
\def\ve{\varepsilon}
\def\emp{\emptyset}
\def\dom{{\rm dom}\,}
\def\epi{{\rm epi\,}}
\def\min{\mbox{\rm minimize}}
\def\B{\mathbb B}
\def\ox{\overline{x}}
\def\oy{\overline{y}}
\def\oz{\overline{z}}
\def\disp{\displaystyle}
\def\tto{\rightrightarrows}
\def\Tilde{\widetilde}
\def\Bar{\overline}
\def\ra{\rangle}
\def\la{\langle}
\def\ve{\varepsilon}
\def\epsilon{\varepsilon}
\def\ox{\bar{x}}
\def\oy{\bar{y}}
\def\oz{\bar{z}}
\def\ov{\bar{v}}
\def\ou{\bar{u}}
\def\co{\mbox{\rm co}\,}
\def\gph{\mbox{\rm gph}\,}
\def\epi{\mbox{\rm epi}\,}
\def\dom{\mbox{\rm dom}\,}
\def\dn{\downarrow}
\def\O{\Omega}
\def\ph{\varphi}
\def\emp{\emptyset}
\def\st{\stackrel}
\def\oR{\Bar{\R}}
\def\gg{\gamma}
\def\al{\alpha}
\def \N{{\rm I\!N}}
\def \R{{\rm I\!R}}
\def\vt{\vartheta}
\def\Limsup{\mathop{{\rm Lim}\,{\rm sup}}}
\def\Limsup{\mathop{{\rm Lim}\,{\rm sup}}}
\numberwithin{equation}{section}
\title{\bf Globally Convergent Coderivative-Based Generalized Newton Methods in Nonsmooth Optimization}
\author{Pham Duy Khanh\footnote{Department of Mathematics, Ho Chi Minh City University of Education, Ho Chi Minh City, Vietnam. E-mails: pdkhanh182@gmail.com, khanhpd@hcmue.edu.vn.}\quad Boris S. Mordukhovich\footnote{Department of Mathematics, Wayne State University, Detroit, Michigan, USA. E-mail: aa1086@wayne.edu.}\quad Vo Thanh Phat\footnote{Department of Mathematics, Wayne State University, Detroit, Michigan, USA. E-mail: phatvt@wayne.edu.} \quad Dat Ba Tran\footnote{Department of Mathematics, Wayne State University, Detroit, Michigan, USA. E-mail: tranbadat@wayne.edu.}}
\begin{document}
\maketitle

\noindent
{\small{\bf Abstract}. This paper proposes and justifies two globally convergent Newton-type methods to solve unconstrained and constrained problems of nonsmooth optimization by using tools of variational analysis and generalized differentiation. Both methods are coderivative-based and employ generalized Hessians (coderivatives of subgradient mappings) associated with objective functions, which are either of class ${\cal C}^{1,1}$, or are represented in the form of convex composite optimization, where one of the terms may be extended-real-valued. The proposed globally convergent algorithms are of two types. The first one extends the damped Newton method and requires positive-definiteness of the generalized Hessians for its well-posedness and efficient performance, while the other algorithm is of  {the regularized Newton  type} being well-defined when the generalized Hessians are merely positive-semidefinite. The obtained convergence rates for both methods are at least linear, but become superlinear under the semismooth$^*$ property of subgradient mappings. Problems of convex composite optimization are investigated with and without the strong convexity assumption  {on  smooth parts} of objective functions by implementing the machinery of forward-backward envelopes. Numerical experiments are conducted for Lasso problems  and for box constrained quadratic programs   with providing performance comparisons of the new algorithms and some other first-order and second-order methods that are highly recognized in nonsmooth optimization.\\[1ex]
{\bf Key words}. Nonsmooth optimization, variational analysis, generalized Newton methods, global convergence, linear and superlinear convergence rates, convex composite optimization, Lasso problems}\\[1ex]
{\bf Mathematics Subject Classification (2000)} 90C31, 49J52, 49J53

\section{Introduction}\label{intro}

It has been well recognized that the classical Newton method furnishes a highly efficient algorithm to solve unconstrained optimization problems of the type
\begin{equation}\label{opproblem}
\min\;\varphi(x)\;\text{ subject to }\;x\in\R^n
\end{equation}
with ${\cal C}^2$-smooth objective functions $\ph$, provided that the Hessian matrix $\nabla^2\ph(\ox)$ is positive-definite at the reference solution $\ox$ and the starting point $x^0$ is chosen sufficiently close to $\ox$. In this case, the Newton iterations exhibit the local convergence with a quadratic rate; see, e.g., \cite{BeckAl,JPang,pol}.

{To achieve the global convergence of the Newton method, various line search algorithms are implemented by using iterative procedures given in the form}
\begin{equation}\label{linealgorithm}
x^{k+1}:=x^k+\tau_k d^k\quad\text{for all }\;k\in\N:=\{1,2,\ldots\}
\end{equation}
with a stepsize $\tau_k\ge 0$ and a search direction $d^k\ne 0$. For Newton-type methods, the search directions are chosen by solving the linear equations
\begin{equation}\label{Newtonequa}
-\nabla\varphi(x^k)=H_kd^k,
\end{equation}
where $H_k:=\nabla^2\varphi(x^k)$ in the classical case, while $H_k$ is an appropriate approximation of the Hessian for various {\em quasi-Newton methods}. An efficient way to choose $H_k$ is provided by the \textit{BFGS $($Broyden-Fletcher-Goldfarb-Shanno$)$ method}; see \cite{DM,JPang,Solo14} for more details on this and related algorithms. If $H_k=\nabla^2\varphi(x^k)$ is positive-definite, algorithm \eqref{linealgorithm} with the {\em backtracking line search} is called the {\em damped Newton method} \cite{BeckAl,Boyd} to distinguish it from the {\em pure Newton method}, which uses a fixed stepsize. When $\nabla^2\varphi(x^k)$ is merely positive-semidefinite, $H_k$ in \eqref{Newtonequa} is often taken as the regularized Hessian $H_k:=\nabla^2\varphi(x^k)+\mu_k I$ with the sequence $\{\mu_k\}$ being chosen as $\mu_k:= c\|\nabla\varphi(x^k)\|$ for some constant $c>0$. The corresponding algorithm is called {the \textit{regularized Newton method}}. We refer the reader to \cite{DYF,LFQY,YF} for many interesting results in this direction.

Among the most popular Newton-type methods to solve problems of nonsmooth optimization \eqref{opproblem} with objective functions of class ${\cal C}^{1,1}$ (i.e., continuously differentiable with Lipschitzian gradients) is the {\em semismooth Newton method}. The literature on this method and its modifications is enormous; the reader is referred to the books \cite{JPang,Solo14,Klatte} and the bibliographies therein for various developments and historical remarks. In fact, most of the known results address solving the equations $f(x)=0$ with Lipschitzian vector functions $f$, as well as their generalized versions, to which optimization problems are reduced via {stationarity conditions} (observe that this is not the case of our paper). The main idea behind the semismooth Newton method is the usage of Clarke's {\em generalized Jacobian} of Lipschitzian mappings. In this way, local convergence results, together with some globalization procedures, were obtained for this method under the {\em nonsingularity} of generalized Jacobians. The reader is referred to, e.g., \cite{hint,Ul} for infinite-dimensional versions of the semismooth Newton method with applications to optimization and control problems governed by partial differential equations. Other versions of Newton-type methods to solve nonsmooth equations, generalized equations, optimization and variational problems can be found in \cite{Bonnans,smir,Donchev09,JPang,hkmp,Solo14,Klatte,nest,rob} among other publications. We are not in a position here to review numerous contributions to Newtonian methods that are not directly related to our paper; see more commentaries below concerning publications related to our results.\vspace*{0.02in}

This paper develops two {\em globally convergent} generalized Newton algorithms to solve optimization problems \eqref{opproblem} starting with the case of ${\cal C}^{1,1}$ objective functions (i.e., those being second-order nonsmooth) and then considering problems of {\em convex composite optimization} with objectives represented as sums of two convex functions such that one of them is smooth, while the other one may be extended-real-valued, which allows us to include problems of constrained optimization. The developed algorithms constitute the coderivative-based {\em generalized damped Newton method} (GDNM) and the coderivative-based {\em  {generalized regularized Newton method}}  {(GRNM)} for the classes of problems under consideration.

Roughly speaking, the major feature of both generalized Newton methods developed here is the replacement of the classical Hessian $\nabla^2\ph$ of ${\cal C}^2$-smooth functions by the {\em generalized Hessian} (or {\em second-order subdifferential}) $\partial^2\ph$ of extended-real-valued, lower semicontinuous ones. This construction was introduced by Mordukhovich \cite{m92} as the {\em coderivative} of the subgradient mapping, while enjoying nowadays comprehensive {\em calculus rules} and constructive {\em computations} for major classes of functions that naturally appear in variational analysis, optimization, and optimal control; see Section~\ref{sec:pre} for more details and references.   Due to such massive developments in variational analysis and its applications, coderivative-based algorithms deserve a strong attention in numerical optimization. This largely motivates our current study.

 Note that coderivatives have been recently employed by Gfrerer and Outrata \cite{Helmut} to design a {\em pure} Newton-type algorithm of solving {\em generalized equations} and--very differently--by Mordukhovich and Sarabi \cite{BorisEbrahim} to find local minimizers of \eqref{opproblem} that were assumed to be {\em tilt-stable} in the sense of Poliquin and Rockafellar \cite{Poli}. The results of \cite{BorisEbrahim} were obtained first for ${\cal C}^{1,1}$ objectives and then were propagated to a general class of {\em prox-regular} functions by using Moreau envelopes. The {\em local} superlinear convergence of these Newtonian algorithms was established in \cite{Helmut,BorisEbrahim} under the {\em semismooth$^*$} assumption on the mapping in question, the property introduced in \cite{Helmut} as a less restrictive version of semismoothness. The  paper by Khanh et al. \cite{BorisKhanhPhat} developed a coderivative-based algorithm of the pure Newton type to solve {\em subgradient inclusions} defined by prox-regular functions with justifying the local superlinear convergence of iterates under the semismooth$^*$ property of the corresponding subgradient mapping. {The question about how to achieve the global convergence of the coderivative-based Newton methods has not been investigated in aforementioned papers and/or any other publication. In particular, the new GDNM algorithms answer in the affirmative the open question formulated in \cite{BorisEbrahim} on whether coderivative-based Newton methods can be globalized via a damping strategy.}\vspace{0.02in}

As mentioned, this paper  addresses not only  the design and justification of new globally convergent algorithms for unconstrained problems of ${\cal C}^{1,1}$ optimization,   but also develops such algorithms for generally {\em constrained} problems of convex composite optimization. For the latter class, we employ the {\em forward-backward envelope} (FBE), the construction that has been recently introduced in variational analysis and optimization, while has been already proved useful in constrained optimization; see, e.g., \cite{stp2} with the references therein. 

A central assumption in our {\em GDNM algorithm} for problems of {\em ${\cal C}^{1,1}$ optimization} is the {\em positive-definiteness} of the generalized Hessian $\partial^2\ph$, which is a direct extension of that for the classical Hessian $\nabla^2\ph$ in the damped Newton method. This assumption alone ensures that GDNM for such problems is {\em well-defined} and converges globally to a {\em tilt-stable minimizer} of \eqref{opproblem} with at least some {\em linear rate}. The $Q$-{\em superlinear} convergence rate of GDNM is guaranteed under the semismooth$^*$ property of the gradient mapping $\nabla\ph$ and some relationship between parameters of the algorithm and the problem data.

The proposed {\em {GRNM algorithm}} to solve problems of ${\cal C}^{1,1}$ optimization does not generally require the positive-definiteness of the generalized Hessian $\partial^2\ph$: we construct it and verify its {\em well-posedness} and {\em global convergence} to stationary points of $\ph$ under merely {\em positive-semidefiniteness} of the generalized Hessian. To establish results on the linear and superlinear {\em convergence rates} of  {GRNM}, the {\em metric regularity} of the gradient mappings is additionally imposed. The latter property has been well understood, characterized, and broadly applied in variational analysis, optimization, and related areas.

Considering further problems of {\em convex composite optimization} in the form
\begin{equation}\label{comp}
\min\;\varphi(x):=f(x)+g(x)\;\text{ subject to }\;x\in\R^n,
\end{equation}
where $f$ is a convex smooth function and $g\colon\R^n\to\oR:=(-\infty,\infty]$ is a lower semicontinuous (l.s.c.) extended-real-valued convex function, we reduce them to ${\cal C}^{1,1}$ optimization by using {\em FBE}. Employing {\em second-order calculus rules} allows us to express the generalized Hessian of FBE via the problem data and relate the metric regularity and tilt stability properties of $\ph$ from \eqref{comp} to the corresponding ones for FBE. In this way, we establish constructive results on well-posedness, global convergence, and convergence rates for both GDNM and  {GRNM} algorithms to solve \eqref{comp} {\em with} and {\em without} the {\em strong convexity} assumption on $f$ in a highly important case of quadratic functions in \eqref{comp}; see below.

The results established by using both GDNM and GRNM for problems of
convex composite optimization with quadratic functions $f$ in \eqref{comp} are employed to solve a basis class of {\em Lasso problems}, which can be written in this form. Such problems were introduced by Tibshirani \cite{Tibshirani} motivated by applications to statistics, and since that they have been largely investigated
and applied to practical models in machine learning, image processing, etc. Computing all the ingredients of both algorithms in Lasso terms, we conduct MATLAB numerical experiments by using random data sets. To compare the performance of GDNM and  {GRNM} with other quite popular and efficient algorithms of nonsmooth optimization, we conduct parallel numerical experiments with the same Lasso data for
the recent second-order {\em Semismooth Newton Augmented Lagrangian
Methods} (SSNAL) developed in \cite{lsk} and the two well-recognized
first-order methods: the {\em Alternating Direction Method of
Multipliers} (ADMM) taken from \cite{BPCPE} and the {\em Fast
Iterative Shrinkage-Thresholding Algorithm} (FISTA) developed in
\cite{BeckTebou}.  In addition, we also provide numerical experiments to solve {\em convex quadratic programming} problems with {\em box constraints}, which arise in many applications as well as subproblems of more complex optimization problems \cite{nw}. The conducted numerical experiments for this part are compared with the {\em trust region reflexive algorithm} \cite{BeckAl}.\vspace*{0.03in}

The subsequent parts of the paper are organized as follows. In Section~\ref{sec:pre}, we briefly overview the tools of variational analysis and generalized differentiation used in our algorithmic developments. Section~\ref{sec:dampedC11} describes and justifies the coderivative-based GDNM to solve problems of ${\cal C}^{1,1}$ optimization. In Section~\ref{sec:modifiedC11}, we design the coderivative-based {GRNM} for the same class of problems with deriving well-posedness and global convergence results. Section~\ref{sec:dampnon} develops both GDNM and {GRNM} to solve problems of convex composite optimization. Section~\ref{Lassosec} is devoted to numerical experiments for our methods and their comparison with the standard semismooth Newton method in $\mathcal{C}^{1,1}$ optimization. Then we conduct numerical experiments to employ GDNM and GRNM for solving a basic class of Lasso problems and compare the achieved numerical results with those obtained by using SSNAL, ADMM, and FISTA. This section also contains applications and numerical experiments to solve box constrained problems of quadratic programming. The concluding Section~\ref{sec:conclusion} lists the main achievements of the paper and discusses some topics of our future research. For the reader's convenience, we place several technical lemmas in the Appendix.
\vspace*{-0.15in}

\section{Preliminaries from Variational Analysis}\label{sec:pre}\vspace*{-0.05in}

 For the reader's convenience,   this section presents some preliminaries from variational analysis and generalized differentiation that are broadly employed  in what follows. More details can be found in the monographs \cite{Mordukhovich06,Mor18,Rockafellar98} from which we borrow the standard notation used below. Recall that $\N:=\{1,2,\ldots\}$.

Given a set-valued mapping (multifunction) $F\colon\R^n\tto\R^m$ between finite-dimensional spaces, its (sequential Painlev\'e-Kuratowski) {\em outer limit} at $\ox$ is defined by
\begin{equation*}
\Limsup_{x\to\ox} F(x):
=\big\{y\in\R^n\;\big|\;\exists\,\mbox{ sequences }\;x_k\to\bar x,\;y_k\rightarrow y\;\mbox{ with }\;y_k\in F(x_k),\;\;k\in\N\big\}.
\end{equation*}
Using the notation $z\st{\O}{\to}\oz$ meaning that $z\to\oz$ with $z\in\O$ for a given nonempty set $\O\subset\R^s$, the (Fr\'echet) {\em regular normal cone} to $\Omega$ at $\bar{z}\in\Omega$ is
\begin{equation*}
\widehat{N}_\Omega(\bar{z}):=\Big\{v\in\R^s\;\Big|\;\limsup_{z\overset{\Omega}{\rightarrow}\bar{z}}\frac{\langle v,z-\bar{z}\rangle}{\|z-\bar{z}\|}\le 0\Big\},
\end{equation*}
while the (Mordukhovich) {\em limiting normal cone} to $\Omega$ at $\bar{z}\in\Omega$ is defined by
\begin{equation}\label{lnc}
N_\Omega(\bar{z}):=  \underset{z\overset{\Omega}{\to} \bar{z}} {\Limsup}\;\widehat{N}_\Omega (z) =\big\{v\in\R^s\;\big|\;\exists\,z_k\st{\O}{\to}\bar{z},\;v_k\to v\;\text{ as }\;k\to\infty\;\text{ with }\;v_k\in\widehat{N}_\Omega(z_k)\big\}.
\end{equation}
The corresponding limiting {\em coderivative} of $F\colon\R^n\tto\R^m$ at $(\ox,\oy)\in\gph F$ is defined    by
\begin{equation}\label{lim-cod}
D^*F(\ox,\oy)(v):=\big\{u\in\R^n\;\big|\;(u,-v)\in N_{{\rm gph}\,F}(\ox,\oy)\big\},\quad v\in\R^m,
\end{equation}
where $\gph F:=\{(x,y)\in\R^n\times\R^m\;|\;y\in F(x)\}$, and where $\oy$ is omitted in the coderivative notation if $F(\bar{x})$ is a singleton. If $F\colon\R^n\to\R^m$ is a single-valued mapping which is ${\cal C}^1$-smooth around $\ox$, then
\begin{equation*}
D^*F(\bar{x})(v)=\big\{\nabla F(\bar{x})^*v\big\}\;\mbox{ for all }\;v\in\R^m
\end{equation*}
via the adjoint/transpose Jacobian matrix $\nabla F(\ox)^*$. The defined coderivative of general multifunctions satisfies comprehensive {\em calculus rules} based on {\em variational/extremal principles} of variational analysis. Among the most impressive and useful advantages of the coderivative \eqref{lim-cod} are complete characterizations in its terms the fundamental well-posedness properties (metric regularity, linear openness, and Lipschitzian behavior) of general multifunctions that were developed in \cite{Mordu93} and were labeled in \cite{Rockafellar98} as the {\em Mordukhovich criteria}. In this paper we employ these characterizations for the property of {\em metric regularity} of $F\colon\R^n\tto\R^m$ around $(\ox,\oy)\in\gph F$ meaning that there exist a number $\mu>0$ and neighborhoods $U$ of $\bar{x}$ and $V$ of $\bar{y}$ such that
\begin{equation}\label{metreg}
{\rm dist}\big(x;F^{-1}(y)\big)\le\mu\,{\rm dist}\big(y;F(x)\big)\;\text{ for all }\;(x,y)\in U\times V,
\end{equation}
where $F^{-1}(y):=\{x\in\R^n\;|\;y\in F(x)\}$, and where `dist' stands for the distance between a point and a set. If in addition $F^{-1}$ has a single-valued localization around $(\bar{y},\bar{x})$, i.e., there exist neighborhoods $U$ of $\ox$ and $V$ of $\oy$ together with a single-valued mapping $\vt\colon V\to U$ such that $\gph F^{-1}\cap(V\times U) =\gph\vartheta$, then $F$ is {\it strongly metrically regular} around $(\bar{x},\bar{y})$ with modulus $\mu>0$. The aforementioned coderivative characterization from \cite[Theorem~3.6]{Mordu93} tells us that, whenever $F$ is closed-graph around $(\ox,\oy)\in\gph F$, its metric regularity around this point is equivalent to the implication
\begin{equation}\label{Morcri}
\big[v\in \R^m,\;0\in D^*F(\bar{x},\bar{y})(v)\big]\Longrightarrow v=0.
\end{equation}
Moreover, the {\em exact regularity bound} of $F$ at $(\bar{x},\bar{y})$, i.e., the infimum  of all $\mu>0$ such that \eqref{metreg} holds for some neighborhoods $U$ and $V$, is calculated by
\begin{equation}\label{Morcri2}
\text{\rm reg}\;F(\bar{x},\bar{y})=\|D^*F(\bar{x},\bar{y})^{-1}\|=\|D^*F^{-1}(\bar{y},\bar{x})\|
\end{equation}
via the norm of the coderivatives of $F$ and $F^{-1}$ as positive homogeneous multifunctions; see \cite{Mordukhovich06,Mor18,Rockafellar98} for more discussions, different proofs, and infinite-dimensional extensions.

 Another notion in variational analysis used in what follows concerns a strong version of local monotonicity for set-valued mappings. We say that $T\colon\R^n\rightrightarrows \R^n$ is \textit{strongly locally monotone} with modulus $\kappa>0$ around $(\ox,\oy)\in\gph T$ if there exist neighborhoods $U$ of $\ox$ and $V$ of $\oy$ such that
$$
\langle x-u,v-w\rangle\ge\kappa\,\|x-u\|^2\quad\text{for all }\;(x,v),(u,w)\in\gph T\cap(U\times V).
$$
If in addition $\gph T\cap(U\times V) =\gph S\cap(U\times V)$ for any monotone operator $S:\R^n\rightrightarrows \R^n$ satisfying $\gph T\cap(U\times V)\subset\gph S$, then $T$ is {\em strongly locally maximal monotone} with modulus $\kappa>0$ around $(\ox,\oy)$. The reader is referred to \cite{MorduNghia1} and \cite[Section~5.2]{Mor18} for coderivative characterizations of the latter property, which is significantly more relaxed than the strong metric regularity of $T$ around $(\ox,\oy)$.\vspace*{0.03in}

Next we consider an extended-real-valued function $\ph\colon\R^n\to\oR$ with the domain and epigraph
\begin{equation*}
\dom\ph:=\big\{x\in\R^n\;\big|\;\ph(x)<\infty\big\},\quad\epi\ph:=\big\{(x,\al)\in\R^{n+1}\;\big|\;\al\ge\ph(x)\big\}.
\end{equation*}
 {Recall that an l.s.c. proper function $\varphi:\R^n\to\overline{\R}$ is {\em strongly convex} on a convex set $\Omega\subset\R^n$ with modulus $\kappa>0$ if the quadratically shifted function $\varphi-(\kappa/2)\|\cdot\|^2$ is convex on $\Omega$.}

The (limiting) {\em subdifferential} of $\ph$ at $\ox\in\dom\ph$ is defined geometrically by
\begin{equation}\label{lim-sub}
\partial\varphi(\ox):=\big\{v\in\R^n\;\big|\;(v,-1)\in N_{{\rm\small epi}\,\varphi}\big(\bar{x},\varphi(\bar{x})\big)\big\}
\end{equation}
via the limiting normal cone \eqref{lnc}, while admitting various analytic representations and satisfying comprehensive calculus rules that can be found in \cite{Mordukhovich06,Mor18,Rockafellar98}. Observe the useful scalarization formula
\begin{equation}\label{scal}
D^*F(\bar{x})(v)=\partial\langle v,F\rangle(\bar{x})\;\mbox{ for all }\;v\in\R^m 
\end{equation}
connecting the coderivative \eqref{lim-cod} of a locally Lipschitzian mapping $F\colon\R^n\to\R^m$ and the subdifferential \eqref{lim-sub} of the function $x\mapsto\langle v,F\rangle(x)$ whenever $v\in\R^m$.

Following \cite{m92}, we define the {\em second-order subdifferential}, or {\em generalized Hessian}, $\partial^2\ph(\ox,\ov)\colon\R^n\tto\R^n$ of $\ph\colon\R^n\to\oR$ at $\ox\in\dom\ph$ for $\ov\in\partial\ph(\ox)$ as the coderivative of the subgradient mapping
\begin{equation}\label{2nd}
\partial^2\ph(\ox,\ov)(u):=\big(D^*\partial\ph\big)(\ox,\oy)(u)\;\mbox{ for all }\;u\in\R^n.
\end{equation}
If $\ph$ is ${\cal C}^2$-smooth around $\ox$, then we have
\begin{equation}\label{C2_Case}
\partial^2\ph(\ox)(u)=\big\{\nabla^2\ph(\ox)u\big\}\;\mbox{ for all }\;u\in\R^n,
\end{equation}
while for $\ph$ of class ${\cal C}^{1,1}$ around $\ox$, we get by the scalarization formula \eqref{scal} that
\begin{equation}\label{scal1}
\partial^2\ph(\ox)(u)=\partial\big\la u,\nabla\ph\big\ra(\ox)\;\mbox{ for all }\;u\in\R^n.
\end{equation}
As follows from \eqref{scal1}, calculus rules and computations of the second-order subdifferential for ${\cal C}^{1,1}$ functions reduce in fact to those for the first-order construction \eqref{lim-sub}. We also have well-developed second-order calculus rules for \eqref{2nd} for rather general classes of extended-real-valued functions; see, e.g., \cite{Mordukhovich06,Mor18,mr} with many additional references. Furthermore, the second-order subdifferential has been computed and analyzed in terms of the given data for broad classes of structural functional systems appearing in numerous aspects of variational analysis, optimization, stability, and optimal control among other areas, with subsequent applications to optimality conditions, sensitivity analysis, numerical algorithms, stochastic programming, electricity markets, etc. The reader can find more information in, e.g., \cite{ChieuLee17,chhm,dsy,dr,hmn,hr,m92,Mordukhovich06,Mor18,BorisOutrata,mos,mr,mrs,roc,yy} along with  other publications on such developments and related topics of second-order variational analysis. Some new results in this direction are presented in what follows. \vspace*{0.03in}

In this paper, we use the fundamental notion of {\em tilt-stable local minimizers} and its second-order characterizations for the justification of the proposed Newton-type algorithms.

\begin{Definition}[\bf tilt-stable local minimizers]\label{def:tilt} Given $\varphi\colon\R^n\to\oR$, a point $\ox\in\dom\varphi$ is a {\sc tilt-stable local minimizer} of $\varphi$ if there exists a number $\gamma>0$ such that the mapping
\begin{equation*}
M_\gamma\colon v\mapsto{\rm argmin}\big\{\varphi(x)-\langle v,x\rangle\;\big|\;x\in\B_\gamma(\ox)\big\}
\end{equation*}
is single-valued and Lipschitz continuous on some neighborhood of $0\in\R^n$ with $M_\gamma(0)=\{\ox\}$. By a {\sc modulus} of tilt stability of $\ph$ at $\ox$ we understand a Lipschitz constant of $M_\gamma$ around the origin.
\end{Definition}

This notion was introduced by Poliquin and Rockafellar in \cite{Poli} and characterized there via $\partial^2\ph$ for a broad class of prox-regular functions $\ph\colon\R^n\to\oR$ that are overwhelmingly involved in second-order variational analysis. More recently, developing second-order subdifferential calculus and second-order growth conditions made it possible to establish complete characterizations of tilt-stable local minimizers for various classes of problems in constrained optimization including nonlinear programming, extended nonlinear programming, composite optimization, minimax problems, second-order cone programming, semidefinite programming, etc.; see, e.g., \cite{ChieuNghia,dl,dmn,gm,Mor18,MorduNghia,mr} among other publications on tilt stability in optimization.\vspace*{0.03in}

Finally in this section, we recall  for completeness  the notions of convergence rates used for our algorithms.

\begin{Definition}[\bf rates of convergence]\label{def-rates} Let $\{x^k\}\subset\R^n$ be a sequence of vectors converging to $\ox$ as $k\to\infty$ with $\bar{x}\ne x^k$ for all $k\in\N$. The convergence rate is said to be:
\begin{itemize}
    
\item[\bf(i)] \textsc{R-linear} if we have
$$
0<\limsup_{k\to\infty}\left(\|x^k-\bar{x}\|\right)^{1/k}<1,
$$
i.e., there exist $\mu\in(0,1)$, $c>0$, and $k_0\in\N$ such that
$$
\|x^{k}-\bar{x}\|\le c\mu^k\quad\text{for all }\;k\ge k_0.
$$

\item[\bf(ii)] \textsc{Q-linear} if we have
$$
\limsup_{k\to\infty}\frac{\|x^{k+1}-\bar{x}\|}{\|x^k-\bar{x}\|}<1,
$$
i.e., there exist $\mu\in(0,1)$ and $k_0\in\N$ such that
$$
\|x^{k+1}-\bar{x}\|\le\mu\|x^k-\bar{x}\|\quad\text{for all }\;k\ge k_0.
$$

\item[\bf(iii)] \textsc{Q-superlinear} if we have
$$
\lim_{k\to\infty}\frac{\|x^{k+1}-\bar{x}\|}{\|x^k-\bar{x}\|}=0.
$$
\end{itemize}
\end{Definition}\vspace*{-0.15in}

\section{Coderivative-Based Damped Newton Method in $\mathcal{C}^{1,1}$ Optimization}\label{sec:dampedC11}

In this section, we concentrate on the unconstrained optimization problem \eqref{opproblem}, where the cost function $\varphi\colon\R^n\to\R$ is of class $\mathcal{C}^{1,1}$.
This kind of problem plays a crucial role not only in numerical optimization but also in applied areas including, e.g., machine learning. In particular, $\mathcal{C}^{1,1}$ optimization problems also arise frequently as {\it  subproblems} in  {\it augmented Lagrangian methods} \cite{hangborisebrahim,Roc74,roc}. Furthermore, L2-loss support vector regression problems are important classes of problems that are $\mathcal{C}^{1,1}$ unconstrained optimization problem \cite{hc12}. More practical examples about $\mathcal{C}^{1,1}$ functions  can be found in the paper \cite{hsn84}.
A coderivative-based generalization of the pure Newton method to solve \eqref{opproblem} {\em locally} was first suggested and investigated in \cite{BorisEbrahim} under the major assumption that a given point $\ox$ is a {\em tilt-stable} local minimizer of \eqref{opproblem}. Then it was extended in \cite{BorisKhanhPhat} to solve directly the gradient system $\nabla\varphi(x) =0$ under certain assumptions on a given solution $\ox$ of the gradient equation ensuring the well-posedness and local superlinear convergence of the algorithm. One of the serious disadvantages of the pure Newton method and its generalizations is that the corresponding sequence of iterates may not converge if the starting point is not sufficiently close to the solution. This motivates us to design and justify a {\em globally} convergent {\em damped Newton} counterpart of the generalized pure Newton algorithms from \cite{BorisKhanhPhat,BorisEbrahim} with {\em backtracking line search} to solve \eqref{opproblem}. Here is the algorithm.

\begin{algorithm}[H]
\caption{{Coderivative-based damped Newton algorithm for $\mathcal{C}^{1,1}$ functions}}\label{LS}
\begin{algorithmic}[1]
\Require {$x^0\in\R^n$,  $\sigma\in\left(0,\frac{1}{2}\right)$,  $\beta\in\left(0,1\right)$}
\For{\texttt{$k=0,1,\ldots$}}
\State\text{If $\nabla\varphi(x^k)=0$, stop; otherwise  go to the next step}
\State \label{alC11} \text{Choose $d^k\in\R^n$ such that
$-\nabla\varphi(x^k)\in\partial^2\ph(x^k)(d^k)$}
\State Set $\tau_k = 1$
\While{$\varphi(x^k+\tau_kd^k)>\varphi(x^k)+\sigma\tau_k\langle d^k,\nabla\varphi(x^k)\rangle$}
\State\text{set $\tau_k:= \beta\tau_k$}
\EndWhile\label{euclidendwhile}
\State \text{Set $x^{k+1}:=x^k+\tau_k d^k$}
\EndFor
\end{algorithmic}
\end{algorithm}

If $\varphi$ is $\mathcal{C}^2$-smooth, Algorithm~\ref{LS} reduces to the standard damped Newton method (as, e.g., in \cite{BeckAl,Boyd}) due to \eqref{C2_Case}. In the general case of $\ph\in
{\cal C}^{1,1}$, it follows from \eqref{lim-cod} that the direction $d^k$ in  {Step 3 of Algorithm \ref{LS}} can be explicitly found from the inclusion
\begin{equation*}
\big(-\nabla\varphi(x^k),-d^k\big)\in N\big((x^k,\nabla\varphi(x^k));\gph\nabla\varphi\big).
\end{equation*}
Note also that, due to the scalarization formula \eqref{scal1}, the Newton equation in Step~3 of Algorithm~\ref{LS} can be equivalently written in the form
\begin{equation}\label{damped-dir}
-\nabla\varphi(x^k)\in\partial\la d^k,\nabla\ph\ra(x^k),
\end{equation}
which merely requires the first-order subdifferential computation.

We start justifying Algorithm~\ref{LS} with the verification of its {\em well-posedness}. The following proposition establishes the existence of {\em descent} Newton directions under the {\em positive-definiteness} of $\partial^2\ph(x)$.

\begin{Proposition}[\bf {existence of Newton directions and descent property}]\label{descent} Let $\varphi\colon\R^n\to\R$ be of class
$\mathcal{C}^{1,1}$ on $\R^n$. Suppose that $\nabla\varphi(x)\ne 0$
and that  {$\partial^2\varphi(x)$ is
positive-definite}, i.e., \begin{equation}\label{pos-def}
\langle v,u\rangle>0\;\mbox{ for all
}\;v\in\partial^2\varphi(x)(u)\;\mbox{ and }\;u\ne 0.
\end{equation}
Then there exists a nonzero direction $d\in\R^n$ such that
\begin{equation}\label{descentforvaphi}
-\nabla\varphi(x)\in\partial^2\varphi(x)(d). \end{equation}
Moreover, every such direction satisfies the inequality $\langle
\nabla\varphi(x),d\rangle<0$. Consequently, for each
$\sigma\in(0,1)$ and $d\in\R^n$ satisfying \eqref{descentforvaphi}
we find $\delta>0$ such that \begin{equation}\label{Armijo}
\varphi(x+\tau
d)\le\varphi(x)+\sigma\tau\langle\nabla\varphi(x),d\rangle\;\mbox{
whenever }\;\tau\in(0,\delta). 
\end{equation} \end{Proposition}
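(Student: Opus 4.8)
The plan is to prove the three assertions in turn: existence of a (nonzero) direction $d$ solving \eqref{descentforvaphi}, the descent inequality $\langle\nabla\varphi(x),d\rangle<0$, and the Armijo-type estimate \eqref{Armijo}. The descent inequality and \eqref{Armijo} are short once existence is in hand, so I expect the existence of the Newton direction---equivalently, the statement that $-\nabla\varphi(x)$ lies in the range of the positively homogeneous multifunction $\partial^2\varphi(x)=D^*\nabla\varphi(x)$---to be the main obstacle.

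For existence, first I would extract from positive-definiteness \eqref{pos-def} the weaker implication that $0\in\partial^2\varphi(x)(u)$ forces $u=0$: taking $v=0$ in \eqref{pos-def} rules out any $u\ne0$ with $0\in\partial^2\varphi(x)(u)$. Since $\partial^2\varphi(x)=D^*\nabla\varphi(x)$ by \eqref{2nd}, this is precisely the coderivative criterion \eqref{Morcri} for the Lipschitzian mapping $F:=\nabla\varphi$, so $\nabla\varphi$ is metrically regular around $(x,\nabla\varphi(x))$. Because $\nabla\varphi\colon\R^n\to\R^n$ is single-valued and continuous between equidimensional spaces, I would upgrade metric regularity to strong metric regularity, obtaining a single-valued Lipschitzian localization $\vartheta$ of $(\nabla\varphi)^{-1}$ with $\gph(\nabla\varphi)^{-1}\cap(V\times U)=\gph\vartheta$ around $(\nabla\varphi(x),x)$. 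The coderivative of the Lipschitzian function $\vartheta$ has full domain $\R^n$, since by scalarization \eqref{scal} one has $D^*\vartheta(\nabla\varphi(x))(w)=\partial\langle w,\vartheta\rangle(\nabla\varphi(x))\ne\emptyset$ for every $w$ (the limiting subdifferential of a locally Lipschitzian scalar function is nonempty in finite dimensions), and coinciding graphs give coinciding coderivatives of $(\nabla\varphi)^{-1}$ and $\vartheta$ at the point. Translating this back through the inverse relation between $D^*F$ and $D^*F^{-1}$ on $\gph F$ yields some $d$ with $-\nabla\varphi(x)\in\partial^2\varphi(x)(d)$. Finally $d\ne0$: by scalarization \eqref{scal1}, $\partial^2\varphi(x)(0)=\partial\langle0,\nabla\varphi\rangle(x)=\{0\}$, so $d=0$ would force $\nabla\varphi(x)=0$, contradicting the hypothesis.

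The descent property is then immediate: given any $d$ with $-\nabla\varphi(x)\in\partial^2\varphi(x)(d)$, the same computation shows $d\ne0$, so applying \eqref{pos-def} with $v=-\nabla\varphi(x)\in\partial^2\varphi(x)(d)$ and $u=d$ gives $\langle-\nabla\varphi(x),d\rangle>0$, i.e., $\langle\nabla\varphi(x),d\rangle<0$. For the Armijo estimate \eqref{Armijo} I would use the quadratic upper bound available for $\mathcal{C}^{1,1}$ functions: with $L$ a Lipschitz constant of $\nabla\varphi$ near $x$, $\varphi(x+\tau d)\le\varphi(x)+\tau\langle\nabla\varphi(x),d\rangle+\tfrac{L}{2}\tau^2\|d\|^2$ for small $\tau>0$. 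Subtracting $\sigma\tau\langle\nabla\varphi(x),d\rangle$, it suffices that $\tfrac{L}{2}\tau\|d\|^2\le(1-\sigma)\bigl(-\langle\nabla\varphi(x),d\rangle\bigr)$, which holds for all $\tau\in(0,\delta)$ with $\delta:=2(1-\sigma)\bigl(-\langle\nabla\varphi(x),d\rangle\bigr)/(L\|d\|^2)>0$, since $-\langle\nabla\varphi(x),d\rangle>0$ and $\sigma<1$. The one delicate point, as flagged above, is the passage from metric regularity to strong metric regularity of $\nabla\varphi$ (ensuring a Lipschitzian inverse localization, hence a full-domain inverse coderivative); this relies on the finite-dimensional equivalence for continuous single-valued maps and is where I would invoke the relevant technical lemma.
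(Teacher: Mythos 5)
Your proof is correct, and the skeleton (strong metric regularity of $\nabla\varphi$ $\Rightarrow$ nonempty coderivative of the Lipschitzian inverse localization via scalarization $\Rightarrow$ existence of $d$; then $d\ne 0$ via $\partial^2\varphi(x)(0)=\{0\}$; then descent from \eqref{pos-def}; then Armijo) matches the paper's. The one step you handle genuinely differently is how strong metric regularity is obtained. The paper uses the full strength of positive-definiteness: by a second-order characterization it makes $\nabla\varphi$ strongly locally maximal monotone around $(x,\nabla\varphi(x))$, which in turn yields strong metric regularity; the remaining existence argument is then delegated to a cited corollary of the companion paper. You instead extract only the nonsingularity consequence $0\in\partial^2\varphi(x)(u)\Rightarrow u=0$, invoke the Mordukhovich criterion \eqref{Morcri} to get metric regularity, and then upgrade to strong metric regularity using the finite-dimensional fact that for a single-valued locally Lipschitz map $\R^n\to\R^n$ metric regularity already implies strong metric regularity (Fusek's theorem, also in Dontchev--Rockafellar). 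That fact is true but is not stated in the paper's preliminaries, so you are importing an external result where the paper stays within its own toolbox of monotonicity characterizations; in exchange, your existence argument needs only nonsingularity of the generalized Hessian rather than its positive-definiteness, which is marginally more general (positive-definiteness is still needed for $\langle\nabla\varphi(x),d\rangle<0$). Your Armijo step is a direct descent-lemma computation with the explicit threshold $\delta=2(1-\sigma)(-\langle\nabla\varphi(x),d\rangle)/(L\|d\|^2)$, which is exactly the paper's Appendix Lemma on this point, whereas the proposition's proof simply cites standard line-search lemmas. No gaps.
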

\begin{proof} By the positive-definiteness of $\partial^2\ph(x)$, it
follows from \cite[Theorem~5.16]{Mor18} that $\nabla\varphi$ is
 {strongly locally maximal monotone} around $(x,\nabla\varphi(x))$.
Thus $\nabla\varphi$ is strongly metrically regular around
$(x,\nabla\varphi(x))$ by \cite[Theorem~5.13]{Mor18}. Using
\cite[Corollary~4.2]{BorisKhanhPhat} yields the existence of
$d\in\R^n$ with $-\nabla\varphi(x)\in\partial^2\varphi(x)(d)$. To
verify further that $d\ne 0$, suppose on the contrary that $d=0$.
Since $\nabla\varphi$ is locally Lipschitz around $x$, it follows
from \cite[Theorem~1.44]{Mordukhovich06} that $$
-\nabla\varphi(x)\in\partial^2\varphi(x)(0)=\big(D^*\nabla\varphi\big)(x)(0)=\{0\},
$$ which contradicts the assumption that $\nabla\varphi(x)\ne 0$.
Employing again the positive-definiteness of $\partial^2\varphi$
tells us that $\langle\nabla\varphi(x),d\rangle<0$. Using
\cite[Lemmas~2.18 and 2.19]{Solo14}, we arrive at \eqref{Armijo} and thus complete the proof. \end{proof}

The next theorem establishes the {\em global linear convergence} of Algorithm~\ref{LS} to a {\em tilt-stable minimizer} of \eqref{opproblem} under the positive-definiteness assumption on the generalized Hessian $\partial^2\ph$.

\begin{Theorem}[\bf global linear convergence of the coderivative-based damped Newton algorithm for $\mathcal{C}^{1,1}$ functions]\label{globalconver} Let $\varphi\colon\R^n\to\R$ be of class $\mathcal{C}^{1,1}$, and let $x^0\in\R^n$ be an arbitrary point such that the generalized Hessian $\partial^2\varphi(x)$ is positive-definite for all $x\in\Omega$, where
\begin{equation}\label{level}
\Omega:=\big\{x\in\R^n\;\big|\;\varphi(x)\le\varphi(x^0)\big\}.
\end{equation}
Then Algorithm~{\rm\ref{LS}} either stops after finitely many iterations, or produces a sequence $\{x^k\}\subset\Omega$ such that $\{\varphi(x^k)\}$ is monotonically decreasing. Moreover, if the iterative sequence $\{x^k\}$ has  {an accumulation point} $\bar{x}$ $($in particular, when the level set $\Omega$ from \eqref{level} is bounded$)$, then $\{x^k\}$ converges to $\bar{x}$, which is a tilt-stable local minimizer of $\varphi$. In this case, we have:

{\bf(i)} The convergence rate of $\{\varphi(x^k)\}$ is at least Q-linear.

{\bf(ii)} The convergence rates of $\{x^k\} $ and  $\{\|\nabla\varphi(x^k)\|\}$ are at least R-linear.
\end{Theorem}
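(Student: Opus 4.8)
The plan is to organize the argument into three blocks: well-posedness and the monotone-decrease dichotomy, identification of the accumulation point as a tilt-stable stationary point, and the convergence-rate analysis. For the first block I would invoke Proposition~\ref{descent} at every iterate. As long as the method has not stopped we have $\nabla\varphi(x^k)\ne 0$, and since the iterates will be shown to remain in $\Omega$, where $\partial^2\varphi$ is positive-definite, the proposition produces a nonzero descent direction $d^k$ satisfying Step~3 with $\langle\nabla\varphi(x^k),d^k\rangle<0$, and guarantees that the Armijo inequality \eqref{Armijo} holds for all small $\tau$. Hence the backtracking loop terminates after finitely many reductions, $x^{k+1}$ is well defined, and the sufficient-decrease test forces $\varphi(x^{k+1})<\varphi(x^k)$. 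Induction on $k$ from $x^0$ then keeps $x^k\in\Omega$ and makes $\{\varphi(x^k)\}$ strictly decreasing, which is exactly the claimed dichotomy.

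Next, assuming a subsequence $x^{k_j}\to\bar x$, continuity of $\varphi$ and monotonicity give $\varphi(x^k)\downarrow\varphi(\bar x)$, so $\bar x\in\Omega$ and, telescoping the sufficient-decrease bound, $\sum_k\tau_k|\langle\nabla\varphi(x^k),d^k\rangle|<\infty$; in particular $\tau_k|\langle\nabla\varphi(x^k),d^k\rangle|\to 0$. The decisive ingredient is a \emph{uniform} positive-definiteness modulus near $\bar x$: since $\partial^2\varphi(\bar x)$ is positive-definite, $\nabla\varphi$ is strongly locally maximal monotone around $(\bar x,\nabla\varphi(\bar x))$ by \cite[Theorem~5.16]{Mor18}, and its coderivative characterization (cf. \cite{MorduNghia1} and \cite[Section~5.2]{Mor18}) yields $\kappa>0$ and a neighborhood $U$ of $\bar x$ with $\langle v,u\rangle\ge\kappa\|u\|^2$ whenever $v\in\partial^2\varphi(x)(u)$ and $x\in U$. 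Applying this to $v=-\nabla\varphi(x^k)\in\partial^2\varphi(x^k)(d^k)$, together with the coderivative norm bound $\|\nabla\varphi(x^k)\|\le L\|d^k\|$ for the $L$-Lipschitz map $\nabla\varphi$, gives $-\langle\nabla\varphi(x^k),d^k\rangle\ge\kappa\|d^k\|^2$ and, via the descent-lemma estimate of the line search, a uniform lower bound $\tau_k\ge\tau_{\min}:=\min\{1,2\beta(1-\sigma)\kappa/L\}>0$. Feeding these into $\tau_k|\langle\nabla\varphi(x^k),d^k\rangle|\to 0$ forces $\|\nabla\varphi(x^{k_j})\|\to 0$, so $\nabla\varphi(\bar x)=0$ by continuity; being stationary with positive-definite $\partial^2\varphi(\bar x)$, $\bar x$ is a tilt-stable local minimizer by the Poliquin--Rockafellar characterization (see \cite{Poli} and \cite{Mor18}).

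For the rates I would first upgrade these pointwise facts to $U$: tilt stability gives quadratic growth $\varphi(x)\ge\varphi(\bar x)+\tfrac{\kappa}{2}\|x-\bar x\|^2$ near $\bar x$, while strong metric regularity of $\nabla\varphi$ at $(\bar x,0)$ gives the error bound $\|x-\bar x\|\le\mu\|\nabla\varphi(x)\|$. A capture argument then confines the iterates: starting from $k_0$ with $x^{k_0}$ close to $\bar x$ (possible since $\bar x$ is an accumulation point) and $\varphi(x^{k_0})-\varphi(\bar x)$ small, the short-step estimate $\|d^k\|\le\kappa^{-1}\|\nabla\varphi(x^k)\|\le\kappa^{-1}L\|x^k-\bar x\|$ with $\tau_k\le1$, combined with the monotone decrease and quadratic growth, keeps $\{x^k\}_{k\ge k_0}\subset U$. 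On $U$ the sufficient decrease together with $-\langle\nabla\varphi(x^k),d^k\rangle\ge\kappa\|d^k\|^2$, $\tau_k\ge\tau_{\min}$, and $\|d^k\|\ge\|\nabla\varphi(x^k)\|/L$ gives $\varphi(x^k)-\varphi(x^{k+1})\ge\rho\|\nabla\varphi(x^k)\|^2$ with $\rho=\sigma\tau_{\min}\kappa/L^2$; combining this with $\varphi(x^k)-\varphi(\bar x)\le\tfrac{L}{2}\|x^k-\bar x\|^2\le\tfrac{L\mu^2}{2}\|\nabla\varphi(x^k)\|^2$ shows that $e_k:=\varphi(x^k)-\varphi(\bar x)$ satisfies $e_{k+1}\le(1-q)e_k$ with $q=2\rho/(L\mu^2)\in(0,1)$, proving the $Q$-linear rate in (i). For (ii), quadratic growth gives $\tfrac{\kappa}{2}\|x^k-\bar x\|^2\le e_k\le(1-q)^{k-k_0}e_{k_0}$, hence $\|x^k-\bar x\|\le c(\sqrt{1-q})^{k}$, which is $R$-linear and forces the entire sequence to converge to $\bar x$; since $\|\nabla\varphi(x^k)\|\le L\|x^k-\bar x\|$, the gradient norms inherit the $R$-linear rate.

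The hard part will lie entirely in the passage from one accumulation point to convergence of the whole sequence, and it is twofold. First, the merely pointwise positive-definiteness hypothesis must be converted into a uniform modulus $\kappa$ on a neighborhood of $\bar x$; I would rely on strong maximal monotonicity being a genuine neighborhood property with a single modulus, so that its coderivative characterization transfers positive-definiteness from $\partial^2\varphi(\bar x)$ to all nearby $\partial^2\varphi(x)$. Second, and more delicate, is the capture step ensuring the iterates never escape $U$: here the short-step estimate $\|d^k\|\le\kappa^{-1}\|\nabla\varphi(x^k)\|$ must be reconciled with the sublevel-set geometry produced by quadratic growth, and extra care is needed because a priori only a subsequence is known to approach $\bar x$. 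Once trapping and the uniform modulus are secured, the rate estimates reduce to the routine telescoping computations sketched above.
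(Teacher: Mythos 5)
Your proposal is correct and, for most of the argument, follows the same path as the paper: Proposition~\ref{descent} plus induction for the dichotomy; a uniform positive-definiteness modulus $\kappa$ on a ball around $\bar x$; the bounds $-\langle\nabla\varphi(x^k),d^k\rangle\ge\kappa\|d^k\|^2$ and $\|\nabla\varphi(x^k)\|\le \ell\|d^k\|$ together with a stepsize lower bound to force $\nabla\varphi(\bar x)=0$ along the subsequence; and the same three telescoping inequalities (sufficient decrease dominating $\|\nabla\varphi(x^k)\|^2$, an error bound $\|\nabla\varphi(x^k)\|\ge c\,\|x^k-\bar x\|$, and $\varphi(x^k)-\varphi(\bar x)\le\frac{\ell}{2}\|x^k-\bar x\|^2$) for the $Q$- and $R$-linear rates, which the paper packages as Lemma~\ref{QRlinear}. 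The genuine divergence is in how subsequential convergence is upgraded to convergence of the whole sequence. The paper invokes Ostrowski's condition \cite[Proposition~8.3.10]{JPang}: it shows $\bar x$ is the only accumulation point in $\mathbb{B}_\delta(\bar x)$ (via strong convexity there) and that $\|x^{k_j+1}-x^{k_j}\|\to 0$ along the convergent subsequence, and lets that abstract result finish. You instead run a capture argument: once one iterate lands close enough to $\bar x$ with $\varphi(x^{k_0})-\varphi(\bar x)$ small, quadratic growth plus the short-step bound $\|d^k\|\le\kappa^{-1}\ell\|x^k-\bar x\|$ trap all subsequent iterates in $U$, after which the linear decay of $e_k$ delivers convergence and the rates in one stroke. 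Your route is more self-contained and produces the trapping you need for the rate estimates anyway; the paper's route avoids the induction over the trapping radius. Your derivation of the uniform modulus via strong local maximal monotonicity and its coderivative characterization is a legitimate substitute for the paper's direct appeal to \cite[Proposition~4.6]{ChieuLee17}.

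One small point to tighten: the closed-form stepsize bound $\tau_k\ge\min\{1,\,2\beta(1-\sigma)\kappa/L\}$ presupposes that the rejected trial point $x^k+(\tau_k/\beta)d^k$ lies in the region where $\nabla\varphi$ has Lipschitz constant $L$; since only local Lipschitz continuity near $\bar x$ is guaranteed, this needs a word of justification. The paper handles it by contradiction: if $\tau_{k_j}\to 0$, then $x^{k_j}+\tau_{k_j}d^{k_j}\to\bar x$ (the directions being bounded), so the trial points eventually enter $\mathbb{B}_\delta(\bar x)$, Lemma~\ref{estimate1} applies, and one gets the absurd conclusion $\sigma\ge 1$. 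In your capture regime the same fix is immediate because $\|d^k\|$ is already small there. This is a technicality to be patched, not a flaw in the architecture.
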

\begin{proof} Proposition~\ref{descent} easily ensures by induction that Algorithm~\ref{LS} either stops after finitely many iterations, or produces a sequence $\{x^k\}\subset\Omega$ such that $\varphi(x^{k+1})<\varphi(x^k)$ for all $k\in\N$. Suppose next that $\{x^k\}$ has  {an accumulation point} $\bar{x}$. Since the set $\Omega$ is closed, we get $\bar{x}\in\Omega$, and hence have that $\partial^2\varphi(\bar{x})$ is positive-definite. Then \cite[Proposition~4.6]{ChieuLee17} gives us positive numbers $\kappa$ and $\delta$ such that
\begin{equation}\label{uniformPD}
\langle z,w\rangle\ge\kappa\|w\|^2\quad\text{for all }\;z\in\partial^2\varphi(x)(w),\;x\in\mathbb{B}_\delta(\bar{x}),\;\mbox{ and }\;w\in\R^n.
\end{equation}
Since $\ph$ is of class ${\cal C}^{1,1}$ around $\bar{x}$, we get without loss of generality that $\nabla\varphi$ is Lipschitz continuous on $\mathbb{B}_\delta(\bar{x})$ with some constant $\ell>0$. By  \cite[Theorem~1.44]{Mordukhovich06} we have
\begin{equation}\label{uniformLip}
\|z\|\leq\ell\|w\|\quad\text{for all }\;z\in\partial^2\varphi(x)(w),\;x\in\mathbb{B}_\delta(\bar{x}),\;\mbox{ and }\;w\in\R^n.
\end{equation}
The rest of the proof is split into the following four claims.\\[1ex]
 {{\bf Claim~1:} {\em For any subsequence $\{x^{k_j}\}$ of $\{x^k\}$ such that $x^{k_j}\to\bar{x}$ as $j \to\infty$, the corresponding sequence $\{\tau_{k_j}\}$ in Algorithm~{\rm\ref{LS}} is bounded from below by some $\gamma>0$, the corresponding sequence $\{d^{k_j}\}$ is bounded, and we have}
\begin{equation}\label{dk}
\langle-\nabla \varphi(x^{k_j}),d^{k_j}\rangle\ge\kappa \|d^{k_j}\|^2,
\end{equation}
\begin{equation}\label{dkLips}
\|\nabla \varphi(x^{k_j})\| \leq \ell \|d^{k_j}\|,
\end{equation}
\begin{equation}\label{xk+1}
\varphi(x^{k_j})-\varphi(x^{k_j+1}) \ge\sigma\gamma\kappa\|d^{k_j}\|^2,
\end{equation}
for all large $j\in \N$. Since {$x^{k_j}\to \ox$} as $j\to \infty$ and $-\nabla\varphi(x^{k_j})\in\partial^2\varphi(x^{k_j})(d^{k_j})$ for all $j\in\N$, we obtain \eqref{dk} and \eqref{dkLips} from \eqref{uniformPD} and \eqref{uniformLip}, respectively. The Cauchy-Schwarz inequality yields $\|\nabla\varphi(x^{k_j})\|\ge\kappa\|d^{k_j}\|$ for such $j$. Since $x^{k_j}\to \ox$ as $j \to \infty$, the latter estimate verifies the boundedness of the sequence of directions $\{d^{k_j}\}$. It remains to show that  $\{\tau_{k_j}\}_{j\in\N}$ is bounded from below by a positive number. Indeed, supposing on the contrary that the opposite holds and combining this with $\tau_k\ge 0$ give us a subsequence of $\{\tau_{k_j}\}$ that converges to $0$. Assume without loss of generality that $\tau_{k_j}\to 0$ as $j\to\infty$.
 Thus $x^{k_j}+\tau_{k_j}d^{k_j}\to\bar{x}$ as $j\to\infty$, and hence $
x^{k_j}+\tau_{k_j}d^{k_j}\in\text{\rm int}\,\mathbb{B}_\delta(\bar{x})$ whenever $j$ is sufficiently large. Applying Lemma~\ref{estimate1} from the Appendix, we have 
$$
\beta^{-1}\tau_{k_j} > \frac{2(\sigma -1)\langle \nabla \varphi(x^{k_j}),d^{k_j}\rangle}{\ell\|d^{k_j}\|^2} \geq \frac{2(1-\sigma)\kappa}{\ell},
$$
where the second inequality follows from \eqref{dk}.  
Letting $j\to\infty$ gives us $\sigma\ge 1$, a contradiction due to the choice of $\sigma$. Hence there exists $\gamma>0$ such that $\tau_{k_j}\ge\gamma$ for all $j \in \N$. Moreover, using the estimate in \eqref{dk} allows us to find $j_0\in\N$ such that
\begin{equation}\label{ineq}
\varphi(x^{k_j})-\varphi(x^{k_j+1})\ge\sigma\tau_{k_j}\langle-\nabla\varphi(x^{k_j}),d^{k_j}\rangle\ge\sigma\gamma\kappa\|d^{k_j}\|^2 \quad \text{for all }\;j\ge j_0,
\end{equation}
which therefore justifies Claim 1.}\\[1ex]
{\bf Claim~2:} {\em $\bar{x}$ is a tilt-stable local minimizer of $\varphi$.} To verify this, we only need to show that $\bar{x}$ is a stationary point of $\varphi$, by taking into account the positive-definiteness of $\partial^2\varphi(\bar{x})$ and the second-order characterization of tilt-stability from \cite[Theorem~1.3]{Poli}.  Since $\bar{x}$ is {an accumulation point} of $\{x^k\}$,  there exists a subsequence $\{x^{k_j}\}_{j\in\N}$ of $\{x^k\}$ such that $x^{k_j}\to \bar{x}$ as $j\to\infty$. Due to Claim~1, we find $\gamma>0$ such that \eqref{xk+1} is satisfied. Since the sequence $\{\varphi(x^k)\}$ is nonincreasing and since $\varphi(\bar{x})$ is {an accumulation point} of $\{\varphi(x^k)\}$, the sequence $\{\varphi(x^k)\}$ must converge to $\varphi(\bar{x})$ as $k\to\infty$. Letting $j\to\infty$ in the inequality \eqref{xk+1}, we have $\|d^{k_j}\| \to 0$ as $j\to\infty$.  {Passing to the limit as $j\to\infty$ in the inequality $\|\nabla\varphi(x^{k_j})\|\le\ell\|d^{k_j}\|\quad\text{for all large }\;j$ from \eqref{dkLips} tells us that $\nabla\varphi(\bar{x})=0$, which readily justifies Claim~2.} \\[1ex]
{\bf Claim~3:} {\em  The iterative sequence $\{x^k\}$ is convergent.} To verify this, we use Ostrowski's condition from \cite[Proposition~8.3.10]{JPang}. First we show that no other  {accumulation point} of $\{x^k\}$ exists in $\mathbb{B}_\delta(\bar{x})$. Assuming the contrary, find $\Tilde x\in\mathbb{B}_\delta(\bar{x})$ such that $\Tilde x\ne\bar{x}$ and that $\Tilde x$ is  {an accumulation point} of $\{x^k\}$. Arguing similarly to Claim~2 tells us that $\Tilde x$ is a tilt-stable local minimizer of $\varphi$, which contradicts the strong convexity of $\varphi$ on $\mathbb{B}_\delta(\bar{x})$. Supposing next that $\{x^{k_j}\}$ is an arbitrary subsequence of $\{x^k\}$ with $x^{k_j}\to\bar{x}$ as $j\to\infty$, we need to check that
\begin{equation}\label{Ostrowski}
\lim_{j\to\infty}\|x^{k_j+1}- x^{k_j}\| =0.
\end{equation}
Indeed, Claim~1 gives us $\gamma>0$ such that \eqref{xk+1} holds, which implies in turn that
$$
\|x^{k_j+1}- x^{k_j}\|^2= \tau_{k_j}^2 \|d^{k_j}\|^2 \le\|d^{k_j}\|^2 \le\frac{1}{\sigma\gamma\kappa}\left(\varphi(x^{k_j})-\varphi(x^{k_j+1}) \right)\to 0
$$
and hence verifies \eqref{Ostrowski}. Finally, it follows from \cite[Proposition~8.3.10]{JPang} that the sequence $\{x^k\}$ converges to $\bar{x}$ as $k\to\infty$, which therefore completes the proof of Claim~3. \\[1ex]
{\bf Claim~4:} {\em The convergence rate of $\{\varphi(x^k)\}$ is at least Q-linear, while the convergence rates of $\{x^k\}$ and  $\{\|\nabla\varphi(x^k)\| \}$ are at least R-linear.}  Indeed, the strong convexity of $\varphi$ on $\mathbb{B}_\delta(\bar{x})$ shows that
\begin{equation}\label{strongineq1}
\langle\nabla\varphi(x)-\nabla\varphi(u),x-u\rangle\ge\kappa\|x-u\|^2\;\mbox{ for all }\;x,u\in\mathbb{B}_\delta(\bar{x}).
\end{equation}
Since $x^k\to\bar{x}$ as $k\to\infty$, we have that $x^k\in U$ for all $k\in\N$ sufficiently large. Substituting $x:=x^k$ and $u:=\bar{x}$ into \eqref{strongineq1} and then using the Cauchy-Schwarz inequality together with the   {stationarity condition} $\nabla\varphi(\bar{x})=0$ give us the lower estimate
\begin{equation}\label{strongineq}
\|\nabla\varphi(x^k)\|\ge\kappa\|x^k-\bar{x}\|
\end{equation}
for large $k$. The local Lipschitz continuity of $\nabla\varphi$ around $\bar{x}$ and the result of \cite[Lemma~A.11]{Solo14} ensure the existence of $\ell>0$ such that
\begin{equation}\label{Lipschitzineq}
\varphi(x^k)-\varphi(\bar{x})=|\varphi(x^k)-\varphi(\bar{x})-\langle\nabla\varphi(\bar{x}),x^k-\bar{x}\rangle|\le\frac{\ell}{2}\|x^k-\bar{x}\|^2
\quad\text{for large }\;k.
\end{equation}
Furthermore, estimate \eqref{uniformLip} together with the inclusion $-\nabla\varphi(x^{k})\in\partial^2\varphi(x^{k})(d^{k})$ implies that
\begin{equation}\label{bounded1Lip1}
\|\nabla\varphi(x^{k})\|\le\ell\|d^{k}\|\quad\text{for large }\;k.
\end{equation}
Claim~1 tells us that the sequence $\{\tau_k\}$ is bounded from below by some constant $\gamma>0$ such that
$$
\varphi(x^{k})-\varphi(x^{k+1})\ge\sigma\gamma\kappa\|d^{k}\|^2\quad\text{for large}\;k.
$$
Combining the above inequality with \eqref{bounded1Lip1} yields the estimates
\begin{equation}\label{ineq2}
\varphi(x^{k})-\varphi(x^{k+1})\ge\sigma\gamma\kappa\ell^{-2}\|\nabla\varphi(x^{k})\|^2
\end{equation}
if $k$ is large. Using \eqref{strongineq}, \eqref{Lipschitzineq}, \eqref{ineq2} and then applying Lemma~\ref{QRlinear} from the Appendix with 
\begin{equation*}
\alpha_k:=\varphi(x^k)-\varphi(\bar{x}),\;\beta_k:=\|\nabla\varphi(x^k)\|,\;\gamma_k:=\|x^k-\bar{x}\|
\end{equation*}
$c_1:=\sigma\gamma\kappa \ell^{-2}$, $c_2:= \kappa$, and $c_3=\ell/2$ verify Claim~4 and thus completes the proof of the theorem.
\end{proof}
\begin{Remark}[\bf on proof of Theorem~\ref{globalconver}]\rm  {Following the suggestion of the referee, we present an alternative proof of Claim~2 in Theorem~\ref{globalconver} by assuming the contrary and using the result of \textit{gradient related property} introduced in \cite{Bert}. Indeed, suppose that $\nabla\varphi(\ox)\ne 0$, we check that the sequence $\{d^k\}$ is \textit{gradient related} to $\{x^k\}$ in the sense of \cite{Bert}, i.e., for any subsequence $\{x^{k_j}\}$ that converges to $\ox$ as $j\to\infty$, the corresponding subsequence $\{d^{k_j}\}$ is bounded and satisfies the inequality
\begin{equation}\label{gradientrelated}
\limsup_{j\to\infty}\langle\nabla\varphi(x^{k_j}),d^{k_j}\rangle <0. 
\end{equation}
The boundedness of $\{d^{k_j}\}$ is clarified in Claim~1. Combining \eqref{dk} and \eqref{dkLips}, we deduce that
$$
\langle\nabla\varphi(x^{k_j}),d^{k_j}\rangle\le-\kappa\|d^{k_j}\|^2 \le-\kappa\ell^{-2}\|\nabla\varphi(x^{k_j})\|^2, \quad\text{for large }\;j\in\N.
$$
Taking the the upper limit above and using $\nabla\varphi(\ox)\ne 0$ justifies \eqref{gradientrelated}. Therefore, it follows from \cite[Proposition~1.2.1]{Bert} that $\nabla\varphi(\ox)=0$. This is a contradiction, which verifies Claim~2.} 
\end{Remark}
	
Our next goal is to establish the {\em Q-superlinear convergence} of Algorithm~\ref{LS}. First recall additional notions of variational analysis and generalized differentiation needed for these developments. A highly recognized concept used for Newton-type methods addressing single-valued Lipschitz continuous mappings is semismoothness. A mapping $f\colon\R^n\to\R^m$ is {\em semismooth} at $\bar{x}$ if it is locally Lipschitzian around this point and the limit
\begin{equation}\label{semi-sm}
\lim_{A\in\text{\rm co}\overline{\nabla}f(\bar{x}+tu')\atop u'\to u,t\downarrow 0}Au'
\end{equation}
exists for all $u\in\R^n$, where `co' stands for the convex hull of a set, and where $\overline{\nabla}f$ is defined by
$$
\overline{\nabla}f(x):=\big\{A\in\R^{m\times n}\;\big|\;\exists\ x_k\overset{\Omega_{f}}\to x\;\text{ such that }\;\nabla f(x_k)\to A\big\},\quad x\in\R^n
$$
with $\Omega_{f}:=\{x\in\R^n\;|\;f\;\text{ is differentiable at }\;x\}$; see \cite{JPang,Solo14,Klatte,qs} for further discussions. Note that   any semismooth mapping automatically admits the classical directional derivative at the reference point.\vspace*{0.02in}

The next proposition about the acceptance of the unit stepsize under semismoothness follows from a close look at the proof of
\cite[Theorem~3.3]{F1996}.

\begin{Proposition}[\bf acceptance of unit stepsize under semismoothness]\label{acceptancesm} Suppose that a function $\varphi:\R^n\to\R$ is $\mathcal{C}^{1}$-smooth around $\ox$ with $\nabla \varphi(\ox)=0$, and that $\nabla\varphi$ is semismooth at this point. Let a sequence $\{x^k\}$ converge to $\bar{x}$ with $x^k\ne\bar{x}$ as $k\in\N$, and let a sequence $\{d^k\}$ satisfy the condition 
\begin{equation}\label{dksuperlinear}
\|x^k+d^k-\bar{x}\|=o(\|x^k-\bar{x}\|).
\end{equation} 
Suppose further that there exists $\kappa>0$ such that
$\langle\nabla\varphi(x^k),d^k\rangle\le\kappa^{-1}\|d^k\|^2$ whenever $k\in\N$ is sufficiently large. Then for any $\sigma\in (0,1/2)$ we have the estimate
\begin{equation}\label{backtrackinghold}
\varphi(x^k+d^k)\le\varphi(x^k)+\sigma\langle\nabla\varphi(x^k),d^k\rangle\;\mbox{ for all large }\;k.
\end{equation} 
\end{Proposition}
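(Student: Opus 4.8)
The plan is to quantify the actual decrease of $\varphi$ along the full step $d^k$ and compare it with the Armijo prediction $\sigma\langle\nabla\varphi(x^k),d^k\rangle$. Throughout I would write $g:=\nabla\varphi$ (locally Lipschitz with some constant $\ell>0$ near $\ox$, since $g$ is semismooth), $h^k:=x^k-\ox$, and $y^k:=x^k+d^k$, so that the superlinear hypothesis \eqref{dksuperlinear} reads $\|y^k-\ox\|=o(\|h^k\|)$. I would first record the elementary geometric consequences: from $d^k=(y^k-\ox)-h^k$ one gets $\|d^k\|=(1+o(1))\|h^k\|$ (so $\|d^k\|$ and $\|h^k\|$ are comparable and $d^k\ne0$ for all large $k$) and $d^k=-h^k+o(\|h^k\|)$.

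The heart of the argument is the identity
\begin{equation*}
\varphi(x^k+d^k)-\varphi(x^k)=\tfrac12\langle\nabla\varphi(x^k),d^k\rangle+o(\|d^k\|^2),
\end{equation*}
which I would obtain by splitting it as $[\varphi(y^k)-\varphi(\ox)]-[\varphi(x^k)-\varphi(\ox)]$ and estimating each bracket. For the first bracket, since $g(\ox)=0$ and $g$ is Lipschitz, the standard quadratic bound for functions with Lipschitzian gradient (as in \eqref{Lipschitzineq}) gives $|\varphi(y^k)-\varphi(\ox)|\le\tfrac{\ell}{2}\|y^k-\ox\|^2=o(\|h^k\|^2)=o(\|d^k\|^2)$. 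For the second bracket I would use that semismoothness of $g$ at $\ox$ makes $g$ B-differentiable there, i.e. $g(\ox+w)-g'(\ox;w)=o(\|w\|)$ with the directional derivative $g'(\ox;\cdot)$ positively homogeneous of degree one. Writing $\varphi(x^k)-\varphi(\ox)=\int_0^1\langle g(\ox+sh^k),h^k\rangle\,ds$ and substituting $g(\ox+sh^k)=s\,g'(\ox;h^k)+o(s\|h^k\|)$ uniformly in $s\in[0,1]$, the remainder integrates to $o(\|h^k\|^2)$ and yields $\varphi(x^k)-\varphi(\ox)=\tfrac12\langle g'(\ox;h^k),h^k\rangle+o(\|h^k\|^2)$.

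Next I would tie $\langle g'(\ox;h^k),h^k\rangle$ to the slope $a_k:=\langle\nabla\varphi(x^k),d^k\rangle$. Using $d^k=-h^k+o(\|h^k\|)$ together with $\|g(x^k)\|=\|g(\ox+h^k)-g(\ox)\|\le\ell\|h^k\|$ gives $a_k=-\langle g(x^k),h^k\rangle+o(\|h^k\|^2)$; and B-differentiability gives $g(x^k)=g'(\ox;h^k)+o(\|h^k\|)$, hence $\langle g(x^k),h^k\rangle=\langle g'(\ox;h^k),h^k\rangle+o(\|h^k\|^2)$. Combining these, $\langle g'(\ox;h^k),h^k\rangle=-a_k+o(\|d^k\|^2)$, so that $\varphi(x^k)-\varphi(\ox)=-\tfrac12 a_k+o(\|d^k\|^2)$, and the displayed identity follows from the two bracket estimates.

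Finally I would close the estimate. Subtracting the Armijo prediction yields $\varphi(x^k+d^k)-\varphi(x^k)-\sigma a_k=(\tfrac12-\sigma)a_k+o(\|d^k\|^2)$. The hypothesis on the slope $\langle\nabla\varphi(x^k),d^k\rangle$ (in its sufficient-descent form, providing $c>0$ with $a_k\le-c\|d^k\|^2$ for all large $k$) makes the leading term a negative multiple of $\|d^k\|^2$, because $\sigma<\tfrac12$; this dominates the $o(\|d^k\|^2)$ remainder for all large $k$, giving \eqref{backtrackinghold}. I expect the main obstacle to be the rigorous derivation of the second-bracket expansion, namely converting semismoothness into the quantitative first-order expansion of $g$ at $\ox$ and controlling its remainder uniformly in the integration variable $s$, together with the repeated use of the superlinear step \eqref{dksuperlinear} both to identify $d^k$ with $-h^k$ to leading order and to make $\varphi(y^k)-\varphi(\ox)$ second-order negligible.
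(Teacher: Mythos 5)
Your proof is correct, and it takes a genuinely different route from the paper: the paper offers no argument of its own for this proposition, deferring instead to ``a close look at the proof of \cite[Theorem~3.3]{F1996}'', where the mechanism is the second-order expansion of SC$^1$ functions via generalized Hessian elements at the trial points. You instead expand everything around the limit point $\ox$: the Lipschitz-gradient bound together with $\nabla\varphi(\ox)=0$ makes $\varphi(x^k+d^k)-\varphi(\ox)=o(\|d^k\|^2)$ thanks to \eqref{dksuperlinear}, while the B-differentiability of $\nabla\varphi$ at $\ox$ (which semismoothness supplies for locally Lipschitzian maps) gives $\varphi(x^k)-\varphi(\ox)=\tfrac12\langle(\nabla\varphi)'(\ox;h^k),h^k\rangle+o(\|h^k\|^2)$, and this quadratic form is then identified with $-\tfrac12\langle\nabla\varphi(x^k),d^k\rangle$ up to $o(\|d^k\|^2)$ by using $d^k=-h^k+o(\|h^k\|)$. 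All the individual estimates check out, including the two places that actually require care: the uniformity in $s\in[0,1]$ of the remainder $g(\ox+sh^k)-sg'(\ox;h^k)=o(s\|h^k\|)$ under the integral, and the comparability $\|d^k\|=(1+o(1))\|x^k-\ox\|$ that lets you trade $o(\|h^k\|^2)$ for $o(\|d^k\|^2)$. What your route buys is a fully self-contained, first-order-only proof with no generalized second-order objects; what the citation buys the authors is brevity.

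One point you should state explicitly rather than in passing: as printed, the slope hypothesis reads $\langle\nabla\varphi(x^k),d^k\rangle\le\kappa^{-1}\|d^k\|^2$, and under that literal reading the conclusion is false (take $\varphi(x)=-\tfrac12x^2$, $x^k=1/k$, $d^k=-x^k$, for which the Armijo test at $\tau=1$ fails for every $\sigma<\tfrac12$). Your proof uses the sufficient-descent form $\langle\nabla\varphi(x^k),d^k\rangle\le-\kappa^{-1}\|d^k\|^2$, which is exactly how the proposition is invoked in the proof of Theorem~\ref{superlinearLS} and is clearly the intended hypothesis; it is needed so that the leading term $(\tfrac12-\sigma)\langle\nabla\varphi(x^k),d^k\rangle\le-(\tfrac12-\sigma)\kappa^{-1}\|d^k\|^2$ dominates the $o(\|d^k\|^2)$ remainder.
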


Quite recently \cite{Helmut}, the concept of semismoothness has been improved and extended to set-valued mappings. To formulate the latter notion, recall first the construction of the {\em directional limiting normal cone} to a set $\Omega\subset\R^s$ at $\oz\in\O$ in the direction $d\in\R^s$ introduced in \cite{gin-mor} by
\begin{equation}\label{dir-nc}
N_\Omega(\oz;d):=\big\{v\in\R^s\;\big|\;\exists\,t_k\dn 0,\;d_k\to d,\;v_k\to v\;\mbox{ with }\;v_k\in\widehat{N}_\Omega(\oz+t_k d_k)\big\}.
\end{equation}
It is obvious that \eqref{dir-nc} agrees with the limiting normal cone \eqref{lnc} for $d=0$. The {\em directional limiting coderivative} of $F\colon\R^n\tto\R^m$ at $(\ox,\oy)\in\gph F$ in the direction $(u,v)\in\R^n\times\R^m$ is defined in \cite{g} by
\begin{equation}\label{dir-cod}
D^*F\big((\ox,\oy);(u,v)\big)(v^*):=\big\{u^*\in\R^n\;\big|\;(u^*,-v^*)\in N_{\text{gph}\,F}\big((\ox,\oy);(u,v)\big)\big\}\;\mbox{ for all }\;v^*\in\R^m.
\end{equation}
Using \eqref{dir-cod}, we come to the aforementioned property of set-valued mappings introduced in \cite{Helmut}.

\begin{Definition}[\bf semismooth$^*$ property of set-valued mappings]\label{semi*} A mapping $F\colon\R^n\tto\R^m$ is {\sc semismooth$^*$} at $(\bar{x},\bar{y})\in\gph F$ if whenever $(u,v)\in\R^n\times\R^m$ we have
\begin{equation*}
\langle u^*,u\rangle=\langle v^*,v\rangle\;\mbox{ for all }\;(v^*,u^*)\in\gph D^*F\big((\ox,\oy);(u,v)\big).
\end{equation*}
\end{Definition}
Among various properties of semismooth$^*$ mappings obtained in \cite{Helmut}, recall that this property holds if the graph of $F\colon\R^n\tto\R^m$ is represented as a union of finitely many closed and convex sets, as well as for the normal cone mappings generated by convex polyhedral sets. Note also that the semismooth$^*$ property of single-valued locally Lipschitzian mappings $f\colon\R^n\to\R^m$ around $\ox$ agrees with the semismooth property \eqref{semi-sm} at this point provided that $f$ is {\em directionally differentiable} at $\ox$; see \cite[Corollary~3.8]{Helmut}. Although the standard semismooth property of locally Lipschitzian and directionally differentiable mappings has been conventionally used in the Newton method literature, some important results were obtained without the directional differentiability assumption; see, e.g., Meng et al. \cite{msz}. Such a relaxed semismooth property of single-valued locally Lipschitzian mappings is known as {\em  $G$-semismoothness}. Note that, in contrast to $G$-semismoothness, the semismooth$^*$property is defined for arbitrary set-valued mappings, and it is used for subgradient ones in this paper; see Section \ref{sec:dampnon}. But
even for single-valued Lipschitzian mappings, the semismooth$^*$
definition based on coderivatives may have some advantages in
comparison with the $G$-semismooth one due to comprehensive coderivative calculus rules. More recent results on the semismooth$^*$ property can be found in \cite{fgh}.\vspace*{0.05in} 

The next lemma discusses the acceptance of the unit stepsize for  $\mathcal{C}^{1,1}$ functions with semismooth$^*$ gradients. The obtained estimates are of their own interest, while are instrumental to establish major superlinear convergence results in this and subsequent sections.   

\begin{Lemma}[\bf acceptance of unit stepsize under semismoothness$^*$]\label{Maratos} Let $\varphi\colon\R^n\to\R$ be a ${\cal C}^1$-smooth function around $\bar{x}\in\R^n$ with $\nabla\varphi(\bar{x})=0$. Suppose that $\nabla\varphi$ is locally Lipschitzian around $\ox$ with modulus $\ell>0$, and that $\nabla\varphi$ is semismooth$^*$ at this point. 
Take a sequence $\{x^k\}$ converging to $\bar{x}$ with $x^k\ne\bar{x}$ as $k\in\N$, and let a sequence $\{d^k\}$ satisfy condition \eqref{dksuperlinear}.
Assume also that there exists $\kappa>0$ such that
\begin{equation}\label{growthdk}
\varphi(x^k+d^k)-\varphi(x^k)\le\langle\nabla\varphi(x^k+d^k),d^k\rangle-\frac{1}{2\kappa}\|d^k\|^2
\end{equation}
whenever $k$ is sufficiently large. Then for any $\sigma\in\left(0,1/(2\ell\kappa)\right)$ we have estimate \eqref{backtrackinghold}. 
\end{Lemma}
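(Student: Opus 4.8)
The goal is to certify that the unit stepsize $\tau=1$ eventually passes the backtracking test, i.e. \eqref{backtrackinghold} holds for all large $k$; this is the mechanism that suppresses the Maratos effect and underlies superlinear convergence of the line-search scheme. Writing $z^k:=x^k+d^k$, my plan is to feed the assumed second-order decrease \eqref{growthdk} into the target \eqref{backtrackinghold}. Since \eqref{growthdk} bounds $\varphi(z^k)-\varphi(x^k)$ from above by $\langle\nabla\varphi(z^k),d^k\rangle-\tfrac{1}{2\kappa}\|d^k\|^2$, it suffices to prove the purely first-order inequality
\[
(1-\sigma)\langle\nabla\varphi(z^k),d^k\rangle+\sigma\langle\nabla\varphi(z^k)-\nabla\varphi(x^k),d^k\rangle\le\tfrac{1}{2\kappa}\|d^k\|^2
\]
for all large $k$, obtained from \eqref{backtrackinghold} by transferring $\sigma\langle\nabla\varphi(x^k),d^k\rangle$ and writing $\nabla\varphi(x^k)=\nabla\varphi(z^k)-\bigl(\nabla\varphi(z^k)-\nabla\varphi(x^k)\bigr)$.

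First I would quantify the step. From \eqref{dksuperlinear} we get $\|z^k-\bar x\|\le\tfrac12\|x^k-\bar x\|$ for large $k$, hence $\tfrac12\|x^k-\bar x\|\le\|d^k\|\le\tfrac32\|x^k-\bar x\|$; in particular $d^k\ne0$ and $\|d^k\|$ is comparable to $\|x^k-\bar x\|$. Using $\nabla\varphi(\bar x)=0$ and the Lipschitz bound then gives $\|\nabla\varphi(z^k)\|\le\ell\|z^k-\bar x\|=o(\|x^k-\bar x\|)=o(\|d^k\|)$, so the first inner product is negligible, $\langle\nabla\varphi(z^k),d^k\rangle=o(\|d^k\|^2)$, and multiplying by the constant $1-\sigma$ leaves it $o(\|d^k\|^2)$. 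For the remaining cross term I would invoke only Lipschitz continuity: $\langle\nabla\varphi(z^k)-\nabla\varphi(x^k),d^k\rangle\le\ell\|z^k-x^k\|\,\|d^k\|=\ell\|d^k\|^2$.

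Combining these two estimates bounds the left-hand side of the displayed inequality by $\bigl[\sigma\ell+o(1)\bigr]\|d^k\|^2$. Because the hypothesis $\sigma<1/(2\ell\kappa)$ is exactly the strict inequality $\sigma\ell<\tfrac{1}{2\kappa}$, the $o(1)$ error is absorbed for all large $k$, yielding the bound by $\tfrac{1}{2\kappa}\|d^k\|^2$ and therefore \eqref{backtrackinghold}. I expect the main obstacle to be the careful bookkeeping of the $o(\|d^k\|^2)$ quantities: the whole argument hinges on \eqref{dksuperlinear} forcing $\nabla\varphi(z^k)$ to be of order $o(\|d^k\|)$ --- something the second-order estimate \eqref{growthdk} cannot deliver by itself --- and on the exact constant $\sigma\ell$ sitting strictly below $\tfrac{1}{2\kappa}$, which is precisely why the admissible range of $\sigma$ must shrink with the product $\ell\kappa$.

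Finally, I note that this estimate uses only the Lipschitz modulus $\ell$ together with the two structural hypotheses \eqref{dksuperlinear} and \eqref{growthdk}. I therefore expect the semismooth$^*$ assumption to play its role not in the present balancing argument but upstream, where it is what secures the superlinear step condition \eqref{dksuperlinear} for the coderivative-based Newton direction in the applications that call upon this lemma.
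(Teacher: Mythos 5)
Your proof is correct and follows essentially the same route as the paper's: both plug \eqref{growthdk} into the Armijo test, use \eqref{dksuperlinear} to deduce that $\|d^k\|$ is comparable to $\|x^k-\bar{x}\|$ and that $\nabla\varphi(x^k+d^k)=o(\|d^k\|)$ via the Lipschitz modulus $\ell$, and close by absorbing the $o(1)$ terms using the strict inequality $\sigma\ell<1/(2\kappa)$; your only cosmetic deviation is bounding the cross term $\langle\nabla\varphi(x^k+d^k)-\nabla\varphi(x^k),d^k\rangle\le\ell\|d^k\|^2$ directly, where the paper instead estimates $\|\nabla\varphi(x^k)\|\le\ell\|x^k-\bar{x}\|$ and invokes $\|x^k-\bar{x}\|/\|d^k\|\to 1$. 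Your closing observation is also accurate: the semismooth$^*$ hypothesis is not used in the paper's proof of this lemma either, only Lipschitz continuity together with \eqref{dksuperlinear} and \eqref{growthdk}.
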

\begin{proof} Having \eqref{growthdk} with $\sigma\in\left(0,1/(2\ell\kappa)\right)$ and using \eqref{dksuperlinear} together with \cite[Lemma~7.5.7]{JPang}, we get
\begin{equation}\label{superlineark+1}
\lim_{k\to\infty}\|x^k-\bar{x}\|/\|d^k\|=1,
\end{equation}
which also yields the limiting relationship
\begin{equation}\label{supdk}
\|x^k+d^k-\bar{x}\|=o(\|d^k\|)\quad \text{as }\;k\to\infty.
\end{equation}
Then the assumed estimate \eqref{growthdk} in (ii) leads us to the inequalities
\begin{eqnarray*}
\varphi(x^k+d^k)-\varphi(x^k)-\sigma\langle\nabla\varphi(x^k),d^k\rangle&\le&\langle\nabla\varphi(x^k+d^k),d^k\rangle-\frac{1}{2\kappa}\|d^k\|^2-\sigma\langle\nabla\varphi(x^k),d^k\rangle\\
&\le&\|\nabla\varphi(x^k+d^k)\|\cdot\|d^k\|-\frac{1}{2\kappa}\|d^k\|^2+\sigma\|\nabla\varphi(x^k)\|\cdot\|d^k\|\\
&\le&\ell\|x^{k}+d^k-\bar{x}\|\cdot\|d^k\|-\frac{1}{2\kappa}\|d^k\|^2+\sigma\ell\|x^k-\bar{x}\|\cdot\|d^k\|\\
&\le&\|d^k\|^2 \left( \ell\frac{\|x^{k}+d^k-\bar{x}\|}{\|d^k\|}-\frac{1}{2\kappa}+\sigma\ell\frac{\|x^k-\bar{x}\|}{\|d^k\|}\right)
\end{eqnarray*}
for all large $k\in\N$. Finally, it follows from $\sigma<1/(2\ell\kappa)$, \eqref{superlineark+1}, and \eqref{supdk} that
$$
\varphi(x^k+d^k)-\varphi(x^k)-\sigma\langle\nabla\varphi(x^k),d^k\rangle\le 0\;
\text{ when }\;k\;\mbox{ is sufficiently large}.
$$
This verifies \eqref{backtrackinghold} and thus completes the proof of the lemma.
\end{proof}

\begin{Remark}[\bf on acceptance of unit stepsize] \rm Proposition \ref{acceptancesm} provides a sufficient condition to ensure the {\em asymptotic acceptance} of the unit stepsize. The key assumption here is the semismoothness of $\nabla\varphi$ at the reference point $\ox$, which always includes the {directional differentiability} of $\nabla\varphi$ at $\ox$. Lemma~\ref{Maratos} introduces an alternative condition without the latter property to attain the acceptance of the unit stepsize, which depends on the modulus $\kappa$ in \eqref{growthdk} and the modulus of the Lipschitz continuity of $\nabla\varphi$. The given proof of this result requires the technical condition $\sigma\in(0,1/(2\ell\kappa))$. It is not clear to us whether this condition can be either removed, or replaced by the more simple one $\sigma\in(0,1/2)$. In the case where $\varphi$ is twice differentiable around the critical point $\ox$ and $\nabla^2\varphi$ is continuous at $\ox$, effective sufficient conditions for the acceptance of the unit stepsize can be found in \cite[Section~5.2]{isu}. 
\end{Remark}

Now we are ready to justify the $Q$-superlinear rate of convergence of iterates in Algorithm~\ref{LS} under some additional assumptions and relationships between parameters of the problem and the algorithm.

\begin{Theorem}[\bf superlinear convergence of the coderivative-based damped Newton algorithm in ${\cal C}^{1,1}$ optimization]\label{superlinearLS} In the setting of Theorem~{\rm\ref{globalconver}} ensuring the convergence of $\{x^k\}$ to a {tilt-stable minimizer $\ox$ of $\varphi$} as $k\to\infty$, suppose that $\nabla\varphi$ is locally Lipschitzian around $\bar{x}$ with some constant $\ell>0$ being also semismooth$^*$ at this point. Then the rate of the convergence of $\{x^k\}$ is at least Q-superlinear if either one of the following two conditions is satisfied:

{\bf(i)} $\nabla\varphi$ is directionally differentiable at $\bar{x}$.

{\bf(ii)} $\sigma\in\left(0,1/(2\ell\kappa)\right)$, where $\kappa>0$ is a modulus of tilt stability of $\ox$.\\
Moreover, in both cases {\rm(i)} and {\rm(ii)} the sequence $\{\varphi(x^k)\}$ converges
Q-superlinearly to $\varphi(\bar{x})$, and the sequence $\{\nabla\varphi(x^k)\}$ converges
Q-superlinearly to $0$ as $k\to\infty$.
\end{Theorem}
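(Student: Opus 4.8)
Since the setting of Theorem~\ref{globalconver} is in force, the iterates $\{x^k\}$ converge to the tilt-stable minimizer $\ox$, and the uniform estimates \eqref{uniformPD} and \eqref{uniformLip} hold on a ball $\mathbb{B}_\delta(\ox)$ on which $\varphi$ is moreover strongly convex. The plan is to reduce the Q-superlinear rate of $\{x^k\}$ to two independent facts: (a) the pure Newton direction contracts the distance to $\ox$ superlinearly, i.e., the one-step estimate \eqref{dksuperlinear} holds; and (b) the backtracking loop eventually accepts the unit stepsize, so that $x^{k+1}=x^k+d^k$ for all large $k$. Granting both, substituting $x^{k+1}=x^k+d^k$ into \eqref{dksuperlinear} gives $\|x^{k+1}-\ox\|=o(\|x^k-\ox\|)$, which is precisely the Q-superlinear convergence of $\{x^k\}$ by Definition~\ref{def-rates}(iii).

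For step (a) I would exploit that the positive-definiteness \eqref{uniformPD} forces $\nabla\varphi$ to be strongly metrically regular around $(\ox,0)$, exactly as already used in the proof of Proposition~\ref{descent}, and then combine this with the semismooth$^*$ property of $\nabla\varphi$ at $\ox$. This is the mechanism behind the \emph{local} superlinear convergence of the coderivative-based pure Newton method established in \cite{BorisEbrahim,BorisKhanhPhat}: for any $x$ near $\ox$ and any $d$ with $-\nabla\varphi(x)\in\partial^2\varphi(x)(d)$, the defining semismooth$^*$ relation along the graph of $\nabla\varphi$ at $(\ox,0)$, together with strong metric regularity, yields the one-step bound $\|x+d-\ox\|=o(\|x-\ox\|)$. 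Applying this with $x=x^k$ and $d=d^k$ delivers \eqref{dksuperlinear}.

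For step (b) I treat the two cases separately. Under (i), directional differentiability combined with the semismooth$^*$ property makes $\nabla\varphi$ semismooth at $\ox$ by \cite[Corollary~3.8]{Helmut}; moreover Claim~1 of Theorem~\ref{globalconver} gives $\langle\nabla\varphi(x^k),d^k\rangle\le-\kappa\|d^k\|^2\le\kappa^{-1}\|d^k\|^2$ for large $k$, so all hypotheses of Proposition~\ref{acceptancesm} are met and \eqref{backtrackinghold} holds, which is exactly the Armijo test passing at $\tau_k=1$. Under (ii), I would use that tilt stability of $\ox$ with modulus $\kappa$ is equivalent, for ${\cal C}^{1,1}$ functions, to strong convexity of $\varphi$ near $\ox$ with modulus $1/\kappa$; the strong-convexity subgradient inequality applied to the pair $x^k$ and $x^k+d^k$, both eventually in $\mathbb{B}_\delta(\ox)$ by \eqref{dksuperlinear}, yields precisely the growth condition \eqref{growthdk} with this $\kappa$. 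Since $\sigma\in(0,1/(2\ell\kappa))$ by assumption, Lemma~\ref{Maratos} applies and again gives \eqref{backtrackinghold}. In either case the while-loop terminates at $\tau_k=1$, establishing (b).

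Finally, the rates for function values and gradients follow by sandwiching. The Lipschitz continuity of $\nabla\varphi$ and the lower bound \eqref{strongineq} give two-sided estimates $m\|x^k-\ox\|\le\|\nabla\varphi(x^k)\|\le\ell\|x^k-\ox\|$ with some $m>0$, so dividing consecutive terms yields $\|\nabla\varphi(x^{k+1})\|/\|\nabla\varphi(x^k)\|\le(\ell/m)\,\|x^{k+1}-\ox\|/\|x^k-\ox\|\to0$, i.e., Q-superlinear convergence of $\{\nabla\varphi(x^k)\}$ to $0$. Similarly, \eqref{Lipschitzineq} and the second-order growth from strong convexity give $\tfrac{m}{2}\|x^k-\ox\|^2\le\varphi(x^k)-\varphi(\ox)\le\tfrac{\ell}{2}\|x^k-\ox\|^2$, whence $(\varphi(x^{k+1})-\varphi(\ox))/(\varphi(x^k)-\varphi(\ox))\to0$, giving Q-superlinear convergence of $\{\varphi(x^k)\}$ to $\varphi(\ox)$. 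I expect the genuine difficulty to lie in step (a), namely extracting the one-step semismooth$^*$ contraction \eqref{dksuperlinear} and making the strong-metric-regularity estimate uniform along the damped iterates, whereas the acceptance of the unit stepsize in step (b), though hinging on the delicate threshold $\sigma<1/(2\ell\kappa)$ in case (ii), is a direct appeal to Proposition~\ref{acceptancesm} and Lemma~\ref{Maratos}.
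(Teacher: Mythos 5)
Your proposal follows essentially the same route as the paper's proof: the paper's Claim~1 establishes the one-step contraction \eqref{dksuperlinear} exactly via the mechanism you point to (coderivative subadditivity from \cite[Lemma~5.6]{BorisKhanhPhat}, the uniform positive-definiteness estimate coming from tilt stability, and the semismooth$^*$ estimate of \cite[Lemma~5.5]{BorisKhanhPhat}), its Claim~2 obtains $\tau_k=1$ for large $k$ by the same appeals to Proposition~\ref{acceptancesm} in case (i) and to the uniform second-order growth of tilt-stable minimizers plus Lemma~\ref{Maratos} in case (ii), and its Claim~3 then concludes. The only cosmetic difference is at the very end, where you derive the Q-superlinear rates of $\{\varphi(x^k)\}$ and $\{\nabla\varphi(x^k)\}$ by a direct two-sided sandwich, whereas the paper simply observes that the iteration has become the pure Newton method and cites \cite[Theorems~5.7 and 5.12]{BorisKhanhPhat}; both are valid.
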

\begin{proof} Fixing a {tilt-stable} minimizer $\ox$ with modulus $\kappa>0$ from the assertions of Theorem~\ref{globalconver}, we split the proof of this theorem into the three claims.\\[1ex]
{\bf Claim~1:} {\em  {The sequence of directions $\{d^k\}$ satisfies condition \eqref{dksuperlinear}}}. Indeed, by the characterization of tilt-stable minimizers via the combined second-order subdifferential taken from \cite[Theorem~3.5]{MorduNghia} and \cite[Proposition 4.6]{ChieuLee17}, we find a positive number $\delta$ such that the inequality
\begin{equation}\label{uniformPDrate}
\langle z,w\rangle\ge\frac{1}{\kappa}\|w\|^2\quad\text{for all }\;z\in\partial^2\varphi(x)(w),\;x\in\mathbb{B}_\delta(\bar{x}),\;\mbox{ and }\;w\in\R^n
\end{equation}
is satisfied. Employing the subadditivity property of coderivatives taken from \cite[Lemma~5.6]{BorisKhanhPhat} gives us
$$
\partial^2\varphi(x^k)(d^k)\subset\partial^2\varphi(x^k)(x^k+d^k-\bar{x})+\partial^2\varphi(x^k)(-x^k+\bar{x}).
$$
Since $-\nabla\varphi(x^k)\in\partial^2\varphi(x^k)(d^k)$, for all $k\in\N$ there exists $v^k\in\partial^2\varphi(x^k)(-x^k+\bar{x})$ such that
$$
-\nabla\varphi(x^k)-v^k\in\partial^2\varphi(x^k)(x^k+d^k-\bar{x}).
$$
Using further \eqref{uniformPDrate} and the Cauchy-Schwarz inequality, we get
\begin{equation}\label{semi}
\|x^k+d^k-\bar{x}\|\le\kappa\|\nabla\varphi(x^k)+v^k\|\quad\text{for sufficiently large }\;k\in\N.
\end{equation}
The semismoothness$^*$ of $\nabla\varphi$ at $\bar{x}$ together with $\nabla\varphi(\bar{x})=0$ implies by \cite[Lemma~5.5]{BorisKhanhPhat} that
\begin{equation}\label{semi2}
\|\nabla\varphi(x^k)+v^k\|=\|\nabla\varphi(x^k)-\nabla\varphi(\bar{x})+v^k\|=o(\|x^k-\bar{x}\|).
\end{equation}
Then it follows from \eqref{semi} and \eqref{semi2} that $\|x^k+d^k-\bar{x}\|=o(\|x^k-\bar{x}\|)$, which justifies the claim.\\[1ex]
{\bf Claim~2:} {\em We have $\tau_k=1$ for all $k\in\N$ sufficiently large provided that either condition {\rm(i)}, or condition {\rm(ii)} of this theorem is satisfied}. To verify the claim, let us show that \eqref{backtrackinghold} holds for large $k$ under the imposed assumptions. Suppose first that (i) is satisfied. {The directional differentiability and semismoothness$^*$ of $\nabla \varphi$ at $\bar{x}$ ensure by \cite[Corollary~3.8]{Helmut} that $\nabla\varphi$ is semismooth at
$\bar{x}$}. Due to the inclusion $-\nabla\varphi(x^k)\in\partial^2\varphi(x^k)(d^k)$ and estimate \eqref{uniformPDrate}, we have that $\langle\nabla\varphi(x^k),d^k\rangle\le-\kappa^{-1}\|d^k\|^2$ if $k$ is large enough. Then the fulfillment of \eqref{backtrackinghold} in case (i) follows directly from  {Proposition~\ref{acceptancesm}}. In case (ii), we know from Claim~1 that $\{d^k\}$ converges to $0$ and that $x^k+d^k\to\bar{x}$ as $k \to\infty$. Employing the uniform second-order growth condition for tilt-stable minimizers from \cite[Theorem~3.2]{MorduNghia} gives us a neighborhood $U$ of $\bar{x}$ such that
\begin{equation*}
\varphi(x)\ge\varphi(u)+\langle\nabla\varphi(u),x-u\rangle+\frac{1}{2\kappa}\|x-u\|^2\quad\text{for all }\;x,u\in U,
\end{equation*}
and thus verifies \eqref{growthdk}. Using Lemma~\ref{Maratos} brings us to \eqref{backtrackinghold}, which verifies the claim.\\[1ex]
{\bf Claim~3:} {\em The conclusions on the Q-superlinear convergence in the theorem hold in both cases ${\rm(i)}$ and ${\rm(ii)}$}. We see from Claim~2 that $\tau_k=1$ for all $k$ sufficiently large, and thus Algorithm~\ref{LS} eventually becomes the generalized pure Newton algorithm from \cite[Algorithm~5.3]{BorisKhanhPhat}. Hence the claimed $Q$-superlinear convergence results follow from \cite[Theorems~5.7 and 5.12]{BorisKhanhPhat}.\end{proof}\vspace*{-0.15in}

 \begin{Remark}[\bf comparing Algorithm~\ref{LS} with other Newton-type methods]\label{compareremark1} \rm There exist several generalized Newton-type methods providing superlinearly convergent iterates under appropriate assumptions; see \cite{JPang,Solo14,Klatte} and the bibliographies therein. We briefly compare Algorithm~\ref{LS} with the two most popular globalized Newtonian methods. The first one is known as the {\em semismooth Newton method} initiated independently by Kummer \cite{ku} and by Qi and Sun \cite{qs}. The second method was introduced by Pang \cite{Pang90} under the name of the {\em B-differential Newton method}. Both methods address solving Lipschitzian equations $f(x)=0$, which reduce in the setting of \eqref{opproblem} to finding solutions of the gradient equation
\begin{equation}\label{equationNM}
\nabla\varphi(x)=0,\quad x\in\R^n.
\end{equation}
 Regarding the aforementioned methods to solve the gradient equation \eqref{equationNM}, observe the following:

{\bf(i)}   The semismooth Newton method and its globalizations for solving \eqref{equationNM} are based on Clarke's generalized Jacobian $\partial_C \nabla\ph$ of $\nabla\ph$ (see below) to find Newton directions $d^k$ as solutions to the system of linear equations 
\begin{equation}\label{semi-dir}
-\nabla\varphi(x^k)=A^kd, 
\end{equation}
where $A^k $ is an element of $\partial_C \nabla\ph(x^k)$ as the convex hull of the \textit{Bouligand's Jacobian}
\begin{equation}\label{lim-hes}
\partial_B \nabla \ph(x):=\Big\{\lim_{m\to\infty}\nabla^2\ph(u_m)\;\Big|\;u_m\to
x,\;u_m\in Q_\ph\Big\},\quad x\in\R^n, 
\end{equation} 
with $Q_\ph$ standing for the set on which $\ph$ is twice differentiable. 
A strong feature of \eqref{semi-dir} is the {\em linearity} of equations therein, although the solvability of these equations requires the {\em nonsingularity} of all the matrices $A^k$.  {Moreover, computation cost to solve the system of linear equations \eqref{semi-dir} is known to be expensive.}  Although system \eqref{damped-dir} for finding directions in Algorithm~\ref{LS} is not linear, we get from it a smaller set of algorithm directions in comparison with \eqref{semi-dir}.  {The detailed numerical experiment to compare our approach and semismooth Newton method in a specific $\mathcal{C}^{1,1}$ optimization problem can be found in Section \ref{Lassosec}}. Another advantage of Algorithm~\ref{LS} over the semismooth Newton method is well-developed second-order subdifferential calculus that is not available for \eqref{lim-hes} and its convexification. 
Theoretical comparisons of local convergence between \textit{coderivative-based Newtonian methods} and semismooth Newton methods can be found in \cite{BorisKhanhPhat,BorisEbrahim}, where the regularity assumptions and the nonsingularity of all matrices in the generalized Jacobian have been discussed in detail. Regarding the global convergence   of our approach and the semismooth Newton method,  observe from \cite[Theorem 13.52]{Rockafellar98} that 
\begin{equation}
    \co \partial^2 \varphi(x) (w) = \co \{Aw|\; A \in \partial_B\nabla \varphi (x) \}, \quad x \in \R^n, w\in\R^n,
\end{equation}
which tells us that the positive-definiteness of $\partial^2\varphi(x)$ is equivalent to the positive-definiteness of $\partial_C \nabla \varphi(x)$ and $\partial_B \nabla \varphi(x)$. This positive-definiteness is required for both global convergence of  Algorithm \ref{LS} and globalization of semismooth Newton method. It happens because we not only need the existence of generalized Newton directions in inclusion  \eqref{damped-dir} and in the system of linear equations \eqref{semi-dir}, but the descent property of these directions is also needed to achieve the desired global convergence.
\color{black}

{\bf(ii)}  {The B-differential Newton method for solving equation \eqref{equationNM} developed in \cite{Pang90} and \cite{Qi93} is based on Robinson's B-derivative, which reduces for Lipschitzian mappings to the classical directional derivative. For this method, we need  to solve the subproblem given below to find Newton directions $d^k$ as solutions to
\begin{equation}\label{B-diff}
-f(x^k) =  f^\prime (x^k,d^k), \quad \text{where }\; f:=\nabla \varphi,
\end{equation}
with requiring the directional differentiability of $\nabla\varphi$. As shown in \cite{Pang90}, the main assumptions to guarantee the solvability of \eqref{B-diff} are the strict Fr\'echet differentiability of $f$ and the nonsingularity of Jacobian $\nabla f$ at the solution point, which are rather restrictive requirements in comparison with our assumptions.}
\end{Remark}
\vspace*{-0.2in}

\section{Coderivative-Based Regularized Newton Method}\label{sec:modifiedC11}\vspace*{-0.05in}

\color{black} Observe that the positive-definiteness of the generalized Hessian $\partial^2\varphi(x)$ in Algorithm~\ref{LS} cannot be replaced by the less demanding positive-semidefiniteness of $\partial^2\ph(x)$ to ensure the existence of descent Newton direction in Algorithm~\ref{LS} as in Proposition~\ref{descent}. Indeed, consider the simplest linear function $\varphi(x):=x$ on $\R$. Then we obviously have that $\varphi''(x)\ge 0$ for all $x\in\R$, while there are no Newton directions $d\in\R$  {such that the backtracking line search condition \eqref{Armijo} holds.} This means that Algorithm~\ref{LS} cannot be even constructed without the positive-definiteness of $\partial^2\varphi(x)$. Here we propose the following globally convergent coderivative-based {\em   {generalized regularized Newton algorithm}} to solve problems of ${\cal C}^{1,1}$ optimization that is {\em well-posed} and exhibits the {\em convergence} of its subsequences to {\em stationary points} of $\ph$ under merely the {\em positive-semidefiniteness} of the generalized Hessian. Linear and superlinear {\em convergence rates} are achieved under some additional assumptions.

\begin{algorithm}[H]
\caption{{Coderivative-based regularized Newton algorithm for $\mathcal{C}^{1,1}$ functions}}\label{LS2}
\begin{algorithmic}[1]
\Require {$x^0\in\R^n$, $c>0$, $\sigma\in\left(0,\frac{1}{2}\right)$, $\beta\in\left(0,1\right)$}
\For{\texttt{$k=0,1,\ldots$}}
\State\text{If $\nabla\varphi(x^k)=0$, stop; otherwise let $\mu_k:= c\|\nabla\varphi(x^k)\|$ and go to next step}
\State \label{alC112} \text{Choose $d^k\in\R^n$ such that
$-\nabla\varphi(x^k)\in\partial^2\varphi(x^k)(d^k) + \mu_k d^k$}
\State Set $\tau_k = 1$
\While{$\varphi(x^k+\tau_kd^k)>\varphi(x^k)+\sigma\tau_k\langle\nabla\varphi(x^k),d^k\rangle$}
\State \text{set $\tau_k:= \beta \tau_k$}
\EndWhile\label{euclidendwhile2}
\State \text{Set $x^{k+1}:=x^k+\tau_k d^k$}
\EndFor
\end{algorithmic}
\end{algorithm}

Our first major result in this section establishes the well-posedness and global convergence of iterates generated by Algorithm~\ref{LS2} to stationary points of $\ph$ under only the positive-semidefiniteness assumption on the generalized Hessian $\partial^2\varphi(x)$.

\begin{Theorem}[\bf {well-posedness and convergence of the
coderivative-based regularized Newton algorithm}]
\label{limitingpoint}  {For a function $\varphi\colon\R^n\to\R$ of class
$\mathcal{C}^{1,1}$ on $\R^n$, the following assertions hold: 

{\bf(i)} Let $x \in \R^n$ be such that $\nabla\varphi(x)\ne 0$
and $\partial^2\varphi(x)$ is
positive-semidefinite, i.e., 
\begin{equation}\label{pos-def2}
\langle z,w\rangle\ge 0\;\mbox{ for all
}\;z\in\partial^2\varphi(x)(w)\;\mbox{ and }\;w\in\R^n.
\end{equation} 
Then for any $\epsilon>0$, there exists a nonzero direction
$d\in\R^n$ with 
\begin{equation}\label{descentforvaphi2}
-\nabla\varphi(x)\in\partial^2\varphi(x)(d) +\epsilon d.
\end{equation} 
Moreover, every such direction satisfies the
inequality $\langle \nabla\varphi(x),d\rangle<0$. Consequently, for
each $\sigma\in(0,1)$ and $d\in\R^n$  {satisfying
\eqref{descentforvaphi2}} we have $\delta>0$ such that
\begin{equation}\label{Armijo2} \varphi(x+\tau
d)\le\varphi(x)+\sigma\tau\langle\nabla\varphi(x),d\rangle\;\mbox{
whenever }\;\tau\in(0,\delta). \end{equation} 

{\bf(ii)} Picking any starting point $x^0\in\R^n$ such that $\partial^2\varphi(x)$ is
positive-semidefinite for all $x\in \O$ from \eqref{level}, we
have that Algorithm~{\rm\ref{LS2}} either stops after finitely many
iterations, or produces a sequence of iterates
$\{x^k\}\subset\Omega$ such that the sequence of values
$\{\varphi(x^k)\}$ is monotonically decreasing. Moreover, all the
accumulation points of $\{x^k\}$ satisfy the stationarity condition.}
\end{Theorem}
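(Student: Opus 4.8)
The plan is to handle part (i) by reducing the regularized inclusion \eqref{descentforvaphi2} to the unregularized setting of Proposition~\ref{descent}. Fix $x$ with $\nabla\varphi(x)\ne 0$ and $\partial^2\varphi(x)$ positive-semidefinite, and for $\epsilon>0$ introduce the shifted quadratic perturbation $g(y):=\varphi(y)+\tfrac{\epsilon}{2}\|y-x\|^2$, which is again of class $\mathcal{C}^{1,1}$. The crucial point is that shifting by the constant $x$ leaves the second-order term intact while matching the first-order one: using the smooth sum rule for the coderivative I would get $\partial^2 g(x)(w)=\partial^2\varphi(x)(w)+\epsilon w$ and record $\nabla g(x)=\nabla\varphi(x)$. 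Then for $w\ne 0$ and $\zeta=z+\epsilon w\in\partial^2 g(x)(w)$ with $z\in\partial^2\varphi(x)(w)$, positive-semidefiniteness \eqref{pos-def2} yields $\langle\zeta,w\rangle=\langle z,w\rangle+\epsilon\|w\|^2\ge\epsilon\|w\|^2>0$, so $\partial^2 g(x)$ is positive-definite and $\nabla g(x)=\nabla\varphi(x)\ne 0$. Applying Proposition~\ref{descent} to $g$ then produces a nonzero $d$ with $-\nabla g(x)\in\partial^2 g(x)(d)$, which is precisely \eqref{descentforvaphi2}.

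For the remaining assertions of part (i), I would argue directly. Writing $-\nabla\varphi(x)=z+\epsilon d$ with $z\in\partial^2\varphi(x)(d)$ and using \eqref{pos-def2} gives
\[
\langle-\nabla\varphi(x),d\rangle=\langle z,d\rangle+\epsilon\|d\|^2\ge\epsilon\|d\|^2.
\]
Any $d$ satisfying \eqref{descentforvaphi2} is nonzero, since $d=0$ would force $-\nabla\varphi(x)\in\partial^2\varphi(x)(0)=\{0\}$ by local Lipschitzness of $\nabla\varphi$ and \cite[Theorem~1.44]{Mordukhovich06}, contradicting $\nabla\varphi(x)\ne 0$; hence $\langle\nabla\varphi(x),d\rangle\le-\epsilon\|d\|^2<0$, so $d$ is a descent direction. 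The Armijo estimate \eqref{Armijo2} then follows for small $\tau$ from continuity of $\nabla\varphi$ and the line-search lemmas \cite[Lemmas~2.18 and 2.19]{Solo14}, exactly as in Proposition~\ref{descent}.

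Part (ii) splits into well-posedness and the stationarity of accumulation points. Well-posedness is immediate from part (i): whenever $\nabla\varphi(x^k)\ne 0$ and $x^k\in\Omega$ (so $\partial^2\varphi(x^k)$ is positive-semidefinite), taking $\epsilon=\mu_k=c\|\nabla\varphi(x^k)\|>0$ in part (i) yields a descent direction $d^k$ for which \eqref{Armijo2} makes the backtracking loop terminate after finitely many reductions. The acceptance inequality then gives $\varphi(x^{k+1})<\varphi(x^k)$, so by induction $\{x^k\}\subset\Omega$ and $\{\varphi(x^k)\}$ is strictly decreasing. For stationarity, let $\bar x$ be an accumulation point with $x^{k_j}\to\bar x$, and suppose for contradiction that $\nabla\varphi(\bar x)\ne 0$. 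By continuity $\mu_{k_j}=c\|\nabla\varphi(x^{k_j})\|\to c\|\nabla\varphi(\bar x)\|>0$, so $\mu_{k_j}\ge\mu_*>0$ for large $j$; this is the key observation, since the regularization now supplies a uniform positive-definiteness modulus $\mu_*$ playing the role of $\kappa$ in Theorem~\ref{globalconver}. Writing $-\nabla\varphi(x^{k_j})=z^{k_j}+\mu_{k_j}d^{k_j}$ with $z^{k_j}\in\partial^2\varphi(x^{k_j})(d^{k_j})$, positive-semidefiniteness gives $\langle-\nabla\varphi(x^{k_j}),d^{k_j}\rangle\ge\mu_*\|d^{k_j}\|^2$, while the local Lipschitz bound \cite[Theorem~1.44]{Mordukhovich06} yields $\|\nabla\varphi(x^{k_j})\|\le(\ell+\mu_{k_j})\|d^{k_j}\|$; combined with the Cauchy-Schwarz inequality these also bound $\{d^{k_j}\}$.

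With $\mu_*$ in hand, I would now mirror Claims~1 and~2 of Theorem~\ref{globalconver}. If $\tau_{k_j}\to 0$ along a further subsequence, then $x^{k_j}+\tau_{k_j}d^{k_j}\to\bar x$, and Lemma~\ref{estimate1} applied to the last rejected trial step gives $\beta^{-1}\tau_{k_j}>2(1-\sigma)\mu_*/\ell$, contradicting $\tau_{k_j}\to 0$; hence $\tau_{k_j}\ge\gamma>0$. The acceptance inequality then yields $\varphi(x^{k_j})-\varphi(x^{k_j+1})\ge\sigma\gamma\mu_*\|d^{k_j}\|^2$. Since $\{\varphi(x^k)\}$ is monotone with convergent subsequence $\varphi(x^{k_j})\to\varphi(\bar x)$, the whole value sequence converges and its consecutive gaps vanish, forcing $\|d^{k_j}\|\to 0$; then $\|\nabla\varphi(x^{k_j})\|\le(\ell+\mu_{k_j})\|d^{k_j}\|\to 0$, so $\nabla\varphi(\bar x)=0$, the desired contradiction. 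I expect the main obstacle to be precisely this last part: unlike in Theorem~\ref{globalconver}, positive-semidefiniteness alone provides no uniform descent modulus, and one must exploit that along any subsequence approaching a \emph{non-stationary} point the regularization parameters $\mu_{k_j}$ stay bounded away from zero, thereby reinstating the quantitative estimates that drive the convergence argument.
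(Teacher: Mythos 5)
Your proposal is correct, but it takes a genuinely different route from the paper in both parts, so a comparison is in order. For assertion (i), you recenter the quadratic perturbation at the reference point, $g(y)=\varphi(y)+\tfrac{\epsilon}{2}\|y-x\|^2$, so that $\nabla g(x)=\nabla\varphi(x)$ while the second-order sum rule still gives $\partial^2 g(x)(w)=\partial^2\varphi(x)(w)+\epsilon w$; this lets you invoke Proposition~\ref{descent} wholesale for the existence of $d$. The paper instead perturbs by $\tfrac{\epsilon}{2}\|\cdot\|^2$ centered at the origin, so $\nabla\varphi_\epsilon(x)=\nabla\varphi(x)+\epsilon x\ne\nabla\varphi(x)$, and it must therefore rerun the strong-local-monotonicity, strong-metric-regularity, and scalarization argument to show that $D^*\nabla\varphi_\epsilon^{-1}(\nabla\varphi_\epsilon(x),x)(\nabla\varphi(x))\ne\emptyset$ before extracting $d$. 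Your reduction is shorter and loses nothing, since Proposition~\ref{descent} already encapsulates exactly that machinery; the remaining steps (nontriviality of $d$ via \cite[Theorem~1.44]{Mordukhovich06}, the descent inequality $\langle\nabla\varphi(x),d\rangle\le-\epsilon\|d\|^2<0$, and the Armijo estimate via \cite[Lemmas~2.18 and 2.19]{Solo14}) coincide with the paper's. For assertion (ii), the paper's primary proof is \emph{direct}: it extracts a convergent subsequence of directions $d^{k_j}\to\bar d$, proves $\langle\nabla\varphi(\bar x),\bar d\rangle=0$ by a case analysis on $\limsup_j\tau_{k_j}$ using the exit condition of the line search, deduces $c\|\nabla\varphi(\bar x)\|\,\|\bar d\|^2=0$ from \eqref{PD}, and combines this with the coderivative Lipschitz bound to force $\nabla\varphi(\bar x)=0$. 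You argue by \emph{contradiction}: assuming $\nabla\varphi(\bar x)\ne 0$ makes $\mu_{k_j}\ge\mu_*>0$, which restores a uniform descent modulus and lets the step-size lower bound (via Lemma~\ref{estimate1}) and the telescoping estimate from Theorem~\ref{globalconver} run verbatim, yielding $\|d^{k_j}\|\to 0$ and hence $\nabla\varphi(\bar x)=0$. This is essentially the alternative proof the authors themselves record in the remark following the theorem (there delegated to the gradient-related property and \cite[Proposition~1.2.1]{Bert}), except that you carry out the final limiting step by hand. Both routes are sound; the paper's direct version avoids the contradiction hypothesis and isolates the identity $\langle\nabla\varphi(\bar x),\bar d\rangle=0$, while yours makes transparent that the regularization term is precisely what substitutes for the missing positive-definiteness near a non-stationary accumulation point.
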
 
\begin{proof} To justify (i), fix $x$
satisfying the assumptions therein and consider the function
\begin{equation*}
\varphi_\epsilon(\cdot)=\varphi(\cdot)+\frac{\epsilon}{2}\|\cdot
\|^2\;\mbox{ for  any }\;\ve>0 \;\mbox{ on }\;\R^n. \end{equation*}
It follows from the second-order subdifferential sum rule in
\cite[Proposition~1.121]{Mordukhovich06} that
\begin{equation}\label{sumrulecod}
\partial^2\varphi_\epsilon(x)(w)=\partial^2\varphi(x)(w)+\epsilon
w\quad\text{for all }\;w\in \R^n. \end{equation} Thus $z-\epsilon w
\in \partial^2\varphi(x)(w)$ whenever
$z\in\partial^2\varphi_\epsilon(x)(w)$. Due to the
positive-semidefiniteness of $\partial^2\varphi(x)(w)$, we get
$\langle z, w\rangle\ge\epsilon\|w\|^2$, which implies that
$\partial^2\varphi_\epsilon(x)$ is positive-definite. It follows
from \cite[Theorem~5.16]{Mor18} that $\nabla\varphi_\epsilon$ is
locally strongly maximally monotone around
$(x,\nabla\varphi_\epsilon(x))$. Hence the gradient mapping $\nabla
\varphi_\epsilon$ is strongly metrically regular around this point
due to \cite[Theorem~5.13]{Mor18} telling us that the inverse
mapping $\nabla\varphi_\epsilon^{-1}:\R^n \rightrightarrows \R^n$
admits a single-valued localization $\vartheta: V\to U$ around
$(\nabla\varphi_\epsilon(x),x)$, which is locally Lipschitzian
around the point $\nabla\varphi_\epsilon(x)$. Combining this with
the scalarization formula \eqref{scal} yields the representations
\begin{equation}\label{scalarcode} D^*\nabla
\varphi_\epsilon^{-1}\big(\nabla\varphi_\epsilon(x),x\big)\big(\nabla
\varphi(x)\big)=
D^*\vartheta\big(\nabla\varphi_\epsilon(x)\big)\big(\nabla
\varphi(x)\big)= \partial\big\langle
\nabla\varphi(x),\vartheta\big\rangle\big(\nabla
\varphi_\epsilon(x)\big). \end{equation} Since $\vartheta$ is
locally Lipschitzian, we deduce from
\cite[Theorem~1.22]{Mordukhovich06} that
$\partial\langle\nabla\varphi_\epsilon(x),\vt\ra(\nabla\ph_\ve(x))\ne\emp$.
Thus  {it follows from \eqref{scalarcode} that} \begin{equation}\label{inv}
D^*\nabla\varphi_\epsilon^{-1}\big(\nabla\varphi_\epsilon(x),x\big)\big(\nabla
\varphi(x)\big)\ne\emp. \end{equation} Picking any $-d\in
D^*\nabla\varphi_\epsilon^{-1}(\nabla\varphi_\epsilon(x),x)(\nabla\varphi(x))$
and easily representing the coderivative of the inverse mapping
$\nabla\ph_\ve^{-1}$ via that of $\nabla\ph_\ve$, we deduce from
\eqref{inv} the inclusion \begin{equation*} -\nabla\varphi(x)\in
D^*\nabla\varphi_\epsilon (x)(d)=\partial^2\varphi_\epsilon(x)(d).
\end{equation*} Due to \eqref{sumrulecod}, it follows from the above
that $-\nabla\varphi(x)\in\partial^2\varphi(x)(d)+\epsilon d$.

To verify (i), it remains to show that $d\ne 0$. Supposing the contrary and using the local Lipschitz continuity of $\nabla\varphi$, we obtain from \cite[Theorem~1.44]{Mordukhovich06} that
$$
-\nabla\varphi(x)\in\partial^2\varphi(x)(0)=\big(D^*\nabla\varphi\big)(x)(0)=\{0\},
$$
which contradicts the imposed assumption $\nabla\varphi(x)\ne 0$. The
positive-definiteness of $\partial^2\varphi_\epsilon(x)$ yields $\langle\nabla\varphi(x),d\rangle<0$ that ensures in turn the {fulfillment of \eqref{Armijo2}} due to \cite[Lemmas~2.18 and 2.19]{Solo14}.\vspace*{0.03in}

Next we proceed with the proof of (ii). It follows from (i) while arguing by induction that Algorithm~\ref{LS2} either stops after finitely many iterations, or produces a sequence of iterates $\{x^k\}\subset\Omega$ such that $\varphi(x^{k+1})<\varphi(x^k)$ for all $k\in\N$. Let us first show that the sequence $\{d^k\}$ is bounded. Indeed, it follows from the construction that
\begin{equation}\label{inclusiond}
-\nabla\varphi(x^k)-\mu_k d^k\in\partial^2\varphi(x^k)(d^k)\quad\text{for all }\;k\in\N,
\end{equation}
and thus we get that $\langle-\nabla\varphi(x^k)-\mu_k d^k, d^k\rangle \ge 0$, i.e.,
\begin{equation}\label{PD}
\langle\nabla\varphi(x^k),-d^k\rangle\ge\mu_k\|d^k\|^2,\quad k\in\N.
\end{equation}
Employing the Cauchy-Schwarz inequality and replacing $\mu_k$ with $c\|\nabla\varphi(x^k)\|$ lead us to
$$
c\|\nabla \varphi(x^k)\|\cdot\|d^k\|=\mu_k\|d^k\|\le\|\nabla \varphi(x^k)\|
$$
and readily implies that $\|d^k\|\le 1/c$ for all $k$.

Fix now  {an accumulation point} $\ox$ of the sequence of iterates $\{x^k\}$ and find a subsequence $\{x^{k_j}\}$ of $\{x^k\}$ such that $x^{k_j}\to \bar{x}$ as $j\to\infty$. Since the sequence $\{\varphi(x^k)\}$ is nonincreasing and $\varphi(\bar{x})$ is  {an accumulation point} of $\{\varphi(x^k)\}$, this sequence converges to $\varphi(\bar{x})$ as $k\to\infty$. Moreover, we have
$$
\varphi(x^{k+1})-\varphi(x^k)\le\sigma\tau_k\langle\nabla\varphi(x^k),d^k\rangle<0\quad \text{for all }\;k\in\N,
$$
which yields the equality
\begin{equation}\label{gradientdirection}
\lim_{k\to\infty}\tau_k\langle \nabla\varphi(x^k),d^k\rangle=0.
\end{equation}
Using the boundedness of $\{d^k\}$, we find a subsequence $\{d^{k_j}\}$ converging to some $\bar{d}\in\R^n$. Let us verify that
\begin{equation}\label{gradientdirection2}
\left\langle \nabla \varphi(\bar{x}),\bar{d}\right\rangle = 0.
\end{equation}
Indeed, if $\displaystyle\limsup_{j \to \infty}\tau_{k_j}>0$, then \eqref{gradientdirection2} follows immediately from \eqref{gradientdirection}. Otherwise, we have $\displaystyle \lim_{j \to \infty}\tau_{k_j}=0$, and the exit condition of the backtracking line search in Step~5 of Algorithm~\ref{LS2} brings us to
\begin{equation}\label{exitcond2}
\varphi\big(x^{k_j}+\tau_{k_j}^\prime d^{k_j}\big)>\varphi\big(x^{k_j}\big)+\sigma\tau_{k_j}^\prime\langle\nabla \varphi(x^{k_j}),d^{k_j}\rangle
\end{equation}
for all $j\in\N$, where $\tau_{k_j}^\prime :=\tau_{k_j}/\beta$. Dividing now both sides of \eqref{exitcond2} by $\tau_{k_j}^\prime$ and letting $j\to\infty$ implies that
$$
\left\langle\nabla\varphi(\bar{x}),\bar{d}\right\rangle =\lim_{j\to\infty}\frac{\varphi\big(x^{k_j}+\tau_{k_j}^\prime d^{k_j}\big)-\varphi\left(x^{k_j}\right)}{\tau_{k_j}^\prime}\ge\sigma\left\langle \nabla\varphi(\bar{x}),\bar{d}\right\rangle.
$$
This tells us that $\left\langle\nabla\varphi(\bar{x}),\bar{d}\right\rangle\ge 0$ by $\sigma<1$. Letting $j\to\infty$ in $\left\langle \nabla\varphi(x^{k_j}),d^{k_j}\right \rangle\le 0$, we get $\left\langle\nabla\varphi(\bar{x}),\bar{d}\right\rangle\le 0$ and arrive at \eqref{gradientdirection2}. Combining \eqref{PD} and \eqref{gradientdirection2} verifies that $\mu_{k_j}\|d^{k_j}\|^2 \to 0$ as $j\to\infty$. By the definition of $\{\mu_k \}$ and the convergence $x^{k_j}\to\ox$, we have $\mu_{k_j}= c\|\nabla\varphi(x^{k_j})\|\to c\|\nabla\varphi(\ox)\|$ as $j\to\infty$, which ensures that
\begin{equation}\label{limestimate}
c\left\|\nabla \varphi(\bar{x})\right\|\cdot\left\|\bar{d}\right\|^2=\lim_{j\to\infty}\mu_{k_j}\left\|d^{k_j}\right\|^2 =0.
\end{equation}
Since $\ph$ is of class ${\cal C}^{1,1}$ around $\bar{x}$, it follows from  \cite[Theorem~1.44]{Mordukhovich06} and \eqref{inclusiond} that there exists $\ell>0$ such that
$\|\nabla\varphi(x^{k_j}) + \mu_{k_j} d^{k_j}\|\le\ell \|d^{k_j}\|$
for all $j$ sufficiently large. This yields
$$
\left\| \nabla \varphi(x^{k_j})\right\|^2 + 2\mu_{k_j}\left\langle\nabla \varphi(x^{k_j}), d^{k_j}\right\rangle+\mu_{k_j}^2\left\|d^{k_j}\right\|^2\le\ell^2\left\|d^{k_j}\right\|^2
$$
for such $j$. Letting $j\to\infty$ in the above inequality, we arrive at $\left\|\nabla \varphi(\bar{x})\right\|^2\le\ell^2\left\|\bar{d}\right\|^2$ due to \eqref{gradientdirection2} and the second equality in \eqref{limestimate}. Using the obtained estimate together with the first part of  \eqref{limestimate} gives us $\nabla\varphi(\bar{x})=0$ and thus completes the proof of the theorem.
\end{proof}\vspace*{-0.05in}

\begin{Remark}[\bf on proof of Theorem~\ref{limitingpoint}(ii)]\rm  {Following the suggestion of the referee, we provide an alternative proof of the assertions in (ii) of Theorem~\ref{limitingpoint} by assuming the contrary and using the {\em gradient related property} taken from \cite{Bert}. Indeed, suppose that $\nabla\varphi(\ox)\ne 0$ and then show that $\{d^k\}$ is gradient related to 
$\{x^k\}$, i.e., for any subsequence $\{x^{k_j}\}$ converging to $\ox$, the corresponding subsequence $\{d^{k}_{j}\}$ is bounded and satisfies
\begin{equation}\label{gradientrelated2}
\limsup_{j\to\infty}\langle\nabla\varphi(x^{k_j}),d^{k_j}\rangle<0. 
\end{equation}
Indeed, the boundedness of $\{d^k\}$ is verified by the same proof as in Theorem~\ref{limitingpoint}(ii). 
Due to \eqref{PD}, we have 
\begin{equation}\label{gradrelatLM}
\langle\nabla\varphi(x^{k_j}),d^{k_j}\rangle\le-\mu_{k_j}\|d^{k_j}\|^2=-c\|\nabla\varphi(x^{k_j})\|.\|d^{k_j}\|^2\;\text{for all }\;j\in \N. 
\end{equation}
Since $\ph$ is of class ${\cal C}^{1,1}$ around $\bar{x}$, it follows from  \cite[Theorem~1.44]{Mordukhovich06} and \eqref{inclusiond} that there exists $\ell>0$ such that
$\|\nabla\varphi(x^{k_j})+\mu_{k_j}d^{k_j}\|\le\ell\|d^{k_j}\|$
for all $j$ sufficiently large. By the definition of $\{\mu_k \}$ and the convergence $x^{k_j}\to\ox$, we have that $\mu_{k_j}=c\|\nabla\varphi(x^{k_j})\|\to c\|\nabla\varphi(\ox)\|$ as $j\to\infty$, which ensures that $\mu_{k_j}$ is bounded from above by some $m>0$, and thus arrive at the estimate
\begin{equation}\label{dknablavaphi}
\|\nabla\varphi(x^{k_j})\|\le\|\nabla\varphi(x^{k_j})+\mu_{k_j}d^{k_j}\|+\mu_{k_j}\|d^{k_j}\|\le(\ell+m)\|d^{k_j}\|\;\text{ for all }\;j\in \N. 
\end{equation} 
Combining \eqref{gradrelatLM} and \eqref{dknablavaphi} tells us that
$$
\langle\nabla\varphi(x^{k_j}),d^{k_j}\rangle\le-c(\ell+m)^{-2}\|\nabla\varphi(x^{k_j})\|^3\;\text{ for all }\;j\in\N.
$$
By taking the upper limit in both sides above and using $\nabla\varphi(\ox)\ne 0$ justifies \eqref{gradientrelated2}.  Therefore, it follows from \cite[Proposition~1.2.1]{Bert} that $\nabla \varphi(\ox)=0$, a contradiction that verifies {\bf (ii)}.}
\end{Remark}

The next theorem establishes the linear and superlinear {\em
convergence rates} of iterates in Algorithm~\ref{LS2} to {\em
tilt-stable minimizers} under the {\em metric regularity assumption}
on $\nabla\ph$ at the solution point $\ox$. Note that the latter
property is constructively characterized by \eqref{Morcri} and
\eqref{scal1} as \begin{equation*}
\big\{u\in\R^n\;\big|\;0\in\partial\big\la
u,\nabla\ph\big\ra(\ox)\big\}=\{0\}. 
\end{equation*}

\begin{Theorem}[\bf {linear and superlinear global convergence of coderivative-based regularized Newton algorithm}]\label{linearcon2} In the setting of Theorem~{\rm\ref{limitingpoint}}, let $\bar{x}$ be  {an accumulation point} of $\{x^k\}$ such that $\nabla\varphi$ is metrically regular around this point. Then $\ox$ is a tilt-stable local minimizer of $\varphi$ and Algorithm~{\rm\ref{LS2}} converges to $\bar{x}$ with the convergence rates as follows:

{\bf(i)} The sequence of values $\{\varphi(x^k)\}$ converges to $\ph(\ox)$ at least Q-linearly.

{\bf(ii)} The sequences $\{x^k\}$ and $\{\nabla\varphi(x^k)\}$ converge at least R-linearly to $\ox$ and $0$, respectively.

{\bf(iii)} The convergence rates of $\{x^k\}$, $\{\varphi(x^k)\}$, and $\{\nabla\varphi(x^k)\}$ are at least Q-superlinear if
$\nabla\varphi$ is semismooth$^*$ at $\bar{x}$ and either one of the following two conditions holds:

{\bf(a)} $\nabla\varphi$ is directionally differentiable at $\bar{x}$.

{\bf(b)} $\sigma\in\left(0,1/(2\ell\kappa)\right)$, where $\kappa>0$ and $\ell>0$ are moduli of metric regularity and Lipschitz
continuity of $\nabla\varphi$ around $\ox$, respectively.
\end{Theorem}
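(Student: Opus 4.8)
The plan is to reduce everything to the machinery already developed for the damped method in Theorems~\ref{globalconver} and \ref{superlinearLS}, once I have promoted the \emph{positive-semidefiniteness} of $\partial^2\varphi$ to genuine \emph{tilt stability} of $\ox$. First I would establish that $\ox$ is a tilt-stable local minimizer. By Theorem~\ref{limitingpoint}(ii) the accumulation point $\ox$ is stationary, so $\nabla\varphi(\ox)=0$. The positive-semidefiniteness of $\partial^2\varphi$ on a neighborhood of $\ox$ makes $\nabla\varphi$ locally (maximal) monotone, i.e., $\varphi$ is locally convex; combining local monotonicity with the assumed metric regularity of $\nabla\varphi$ around $(\ox,0)$ upgrades it to \emph{strong} metric regularity, so that $\nabla\varphi^{-1}$ admits a single-valued Lipschitzian localization and $\|x-\ox\|\le\kappa\|\nabla\varphi(x)\|$ near $\ox$. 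The co-coercivity of the Lipschitzian gradient of a (locally) convex function then yields $\langle\nabla\varphi(x),x-\ox\rangle\ge\ell^{-1}\|\nabla\varphi(x)\|^2\ge\ell^{-1}\kappa^{-2}\|x-\ox\|^2$, so $\nabla\varphi$ is \emph{strongly} locally monotone and $\partial^2\varphi(\ox)$ is positive-definite. By the second-order characterization of tilt-stable minimizers (as in \cite{Poli,MorduNghia,ChieuLee17}) this gives tilt stability of $\ox$ together with the uniform estimate \eqref{uniformPDrate}. I expect this upgrade of positive-\emph{semi}definiteness to positive-definiteness to be the first delicate point, since it is exactly here that the new metric-regularity hypothesis is consumed.

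Once tilt stability and \eqref{uniformPDrate} are in hand, the global convergence $x^k\to\ox$ and assertions (i)--(ii) follow the same template as Claims~1, 3, 4 of Theorem~\ref{globalconver}, the only new ingredient being the regularization term $\mu_kd^k$. Pairing \eqref{inclusiond} with \eqref{uniformPDrate} gives $\langle-\nabla\varphi(x^k),d^k\rangle\ge(\kappa^{-1}+\mu_k)\|d^k\|^2\ge\kappa^{-1}\|d^k\|^2$, so the directions are uniformly descent and, via Lemma~\ref{estimate1}, the backtracking stepsizes obey $\tau_k\ge\gamma>0$ with decrease $\varphi(x^k)-\varphi(x^{k+1})\ge\sigma\gamma\kappa^{-1}\|d^k\|^2$. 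Uniqueness of the accumulation point near $\ox$ (tilt-stable minimizers are isolated) plus Ostrowski's condition \cite[Proposition~8.3.10]{JPang} forces the whole sequence to converge. For the rates, the $\mathcal{C}^{1,1}$ upper bound $\|\nabla\varphi(x^k)\|\le(\ell+\mu_k)\|d^k\|$ gives $\|d^k\|\ge(\ell+1)^{-1}\|\nabla\varphi(x^k)\|$ for large $k$, hence a per-step decrease of the form \eqref{ineq2}; combining this with the strong-convexity lower bound $\|\nabla\varphi(x^k)\|\ge\kappa^{-1}\|x^k-\ox\|$ and the Lipschitz upper bound on $\varphi(x^k)-\varphi(\ox)$, Lemma~\ref{QRlinear} delivers the Q-linear rate in (i) and the R-linear rates in (ii).

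For the superlinear rates in (iii), the decisive step is the estimate $\|x^k+d^k-\ox\|=o(\|x^k-\ox\|)$, that is, condition \eqref{dksuperlinear}. Applying the coderivative subadditivity of \cite[Lemma~5.6]{BorisKhanhPhat} to $-\nabla\varphi(x^k)-\mu_kd^k\in\partial^2\varphi(x^k)(d^k)$ produces $v^k\in\partial^2\varphi(x^k)(\ox-x^k)$ with $-\nabla\varphi(x^k)-\mu_kd^k-v^k\in\partial^2\varphi(x^k)(x^k+d^k-\ox)$; then \eqref{uniformPDrate} and Cauchy--Schwarz give $\|x^k+d^k-\ox\|\le\kappa\big(\|\nabla\varphi(x^k)+v^k\|+\mu_k\|d^k\|\big)$. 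Semismoothness$^*$ makes the first term $o(\|x^k-\ox\|)$ via \cite[Lemma~5.5]{BorisKhanhPhat}, while the second term is harmless: \eqref{inclusiond} and \eqref{uniformPDrate} yield $\|d^k\|\le\kappa\|\nabla\varphi(x^k)\|\le\kappa\ell\|x^k-\ox\|$, so $\mu_k\|d^k\|=c\|\nabla\varphi(x^k)\|\,\|d^k\|=O(\|x^k-\ox\|^2)=o(\|x^k-\ox\|)$. This is the crux, and the main obstacle I anticipate: one must verify that the Tikhonov-type regularization is genuinely \emph{second order} along the iterates and therefore does not degrade superlinearity.

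With \eqref{dksuperlinear} secured, the acceptance of the unit stepsize $\tau_k=1$ for all large $k$ follows from Proposition~\ref{acceptancesm} under (a) and from Lemma~\ref{Maratos} under (b): the uniform second-order growth for tilt-stable minimizers supplies \eqref{growthdk}, and since the metric-regularity and tilt-stability moduli both equal $\|D^*\nabla\varphi(\ox)^{-1}\|$ by \eqref{Morcri2}, the hypothesis $\sigma\in(0,1/(2\ell\kappa))$ is exactly what the lemma requires. Then $x^{k+1}=x^k+d^k$ together with \eqref{dksuperlinear} gives the Q-superlinear convergence of $\{x^k\}$ at once, and the sandwich estimates $\kappa^{-1}\|x^k-\ox\|\le\|\nabla\varphi(x^k)\|\le\ell\|x^k-\ox\|$ and $\varphi(x^k)-\varphi(\ox)\le\tfrac{\ell}{2}\|x^k-\ox\|^2$ transfer the superlinear rate to $\{\varphi(x^k)\}$ and $\{\nabla\varphi(x^k)\}$, completing the proof.
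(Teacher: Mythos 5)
Your proposal is correct and, for everything after the tilt-stability step, follows the paper's proof essentially claim by claim: the descent and stepsize estimates derived from the uniform positive-definiteness bound \eqref{uniformPSD} together with Lemma~\ref{estimate1}, Ostrowski's condition for convergence of the whole sequence, Lemma~\ref{QRlinear} for the linear rates in (i)--(ii), the coderivative subadditivity argument with the crucial observation that $\mu_k\|d^k\|=O(\|x^k-\ox\|^2)$ to secure \eqref{dksuperlinear}, and Proposition~\ref{acceptancesm}/Lemma~\ref{Maratos} for the acceptance of the unit stepsize. The one place you genuinely diverge is the opening step: the paper obtains tilt stability of $\ox$, with the \emph{same} modulus $\kappa$ as the metric regularity modulus, in a single stroke from \cite[Theorem~4.13]{dmn}, which says precisely that positive-semidefiniteness of $\partial^2\varphi$ plus metric regularity of $\nabla\varphi$ at a stationary point yields tilt stability. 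You instead reconstruct this via local monotonicity $\Rightarrow$ strong metric regularity $\Rightarrow$ Baillon--Haddad co-coercivity $\Rightarrow$ strong local monotonicity. That route is viable, but as written it only produces the one-point estimate $\langle\nabla\varphi(x),x-\ox\rangle\ge\ell^{-1}\kappa^{-2}\|x-\ox\|^2$, which is not enough to conclude strong local monotonicity or positive-definiteness of $\partial^2\varphi(\ox)$; you need the two-point version, obtained by applying metric regularity at the value $y=\nabla\varphi(u)$ for a second base point $u$ (giving $\|x-u\|\le\kappa\|\nabla\varphi(x)-\nabla\varphi(u)\|$) before invoking co-coercivity. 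With that small repair your argument is self-contained and avoids citing \cite{dmn}, at the price of a cruder provisional modulus $\ell\kappa^2$; you then correctly recover the sharp modulus $\kappa$ needed for condition (iii)(b) from \eqref{Morcri2} and the coincidence of the tilt-stability and metric-regularity moduli for locally convex functions.
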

\begin{proof} We split the proof into the seven major claims of their own interest.\\[1ex]
{\bf Claim~1:} {\em  $\bar{x}$  is a tilt-stable local minimizer of $\varphi$.} Due to Theorem \ref{limitingpoint}, $\bar{x}$ is a stationary point of $\varphi$ and $\bar{x}\in\Omega$, which implies that $\partial^2\varphi(\bar{x})$ is positive-semidefinite. This property and the imposed metric regularity of $\nabla\varphi$ around $\bar{x}$ with modulus $\kappa$ allow us to conclude by using \cite[Theorem~4.13]{dmn} that $\bar{x}$ is a tilt-stable local minimizer of $\varphi$ with the same modulus $\kappa$. \\[1ex]
{\bf Claim~2:} {\em  For any subsequence $\{x^{k_j}\}$ of $\{x^k\}$ with $x^{k_j}\to \bar{x}$ as $j\to\infty$, the corresponding sequence $\{\tau_{k_j}\}$ in Algorithm~{\rm\ref{LS2}} is bounded from below by a positive number $\gamma$, and we have}
\begin{equation}\label{xk+1(2)}
\varphi(x^{k_j})-\varphi(x^{k_j+1})\ge\frac{\sigma\gamma}{\kappa} \|d^{k_j}\|^2\quad \text{for all large}\;j\in \N.
\end{equation}
Indeed, supposing on the contrary that $\{\tau_{k_j}\}$ is not bounded from below by a positive number and combining this with $\tau_k\ge 0$ give us a subsequence of $\{\tau_{k_j}\}$ that converges to $0$. Let $\tau_{k_j}\to 0$ as $j\to\infty$ without loss of generality. Then using the characterization of tilt-stable minimizers via the combined second-order subdifferential taken from \cite[Theorem~3.5]{MorduNghia} and \cite[Proposition~4.6]{ChieuLee17}, we find $\delta>0$ with
\begin{equation}\label{uniformPSD}
\langle z,w\rangle\ge\frac{1}{\kappa}\|w\|^2\quad\text{for all }\;z\in\partial^2\varphi(x)(w),\;x\in\mathbb{B}_\delta(\bar{x}),\;\mbox{ and }\;w\in\R^n.
\end{equation}
Since $-\nabla \varphi(x^{k_j})-\mu_{k_j}d^{k_j}\in\partial^2\varphi(x^{k_j})(d^{k_j})$ for all $j\in\N$, it follows from \eqref{uniformPSD} that
\begin{equation}\label{dk2}
\langle-\nabla \varphi(x^{k_j}), d^{k_j}\rangle\ge\left(\mu_{k_j}+\frac{1}{\kappa}\right) \|d^{k_j}\|^2 \ge\frac{1}{\kappa}\|d^{k_j}\|^2 \quad \text{for all $j$ sufficiently large}.
\end{equation}
Then Claim~1 of Theorem~\ref{limitingpoint} tells us that the sequence $\{d^k\}$ is bounded.
Hence $x^{k_j}+\tau_{k_j}d^{k_j}\to\bar{x}$ as $j\to\infty$ and $
x^{k_j}+\tau_{k_j}d^{k_j}\in\text{\rm int}\,\mathbb{B}_\delta(\bar{x})$ whenever $j$ is sufficiently large. Applying  Lemma~\ref{estimate1} from the Appendix, we get 
$$
\beta^{-1}\tau_{k_j} > \frac{2(\sigma -1)\langle \nabla \varphi(x^{k_j}),d^{k_j}\rangle}{\ell\|d^{k_j}\|^2} \geq \frac{2(1-\sigma)}{\kappa\ell},
$$
where the second inequality follows from \eqref{dk2}.  
\color{black}
Letting $j\to\infty$ gives us $\sigma\ge 1$, a contradiction due to $\sigma<1$. This verifies the existence of $\gamma>0$ such that $\tau_{k_j}\ge\gamma$ for all $j\in\N$. Using estimate \eqref{dk2}, we find $j_0\in\N$ with
\begin{equation}\label{ineq3}
\varphi(x^{k_j})-\varphi(x^{k_j+1})\ge\sigma\tau_{k_j}\langle-\nabla\varphi(x^{k_j}),d^{k_j}\rangle\ge\frac{\sigma\gamma}{\kappa}\|d^{k_j}\|^2 \quad \text{for all }\;j\ge j_0,
\end{equation}
which therefore justifies Claim~2.\\[1ex]
{\bf Claim~3:} {\em   {The iterative sequence $\{x^k\}$ converges to $\ox$}.} To verify this, we are based on Ostrowski's condition from \cite[Proposition~8.3.10]{JPang}. Let us first check that there is no other  {accumulation point} of $\{x^k\}$ in $\mathbb{B}_\delta(\bar{x})$. On the contrary,
suppose that there exists $\Tilde x\in\mathbb{B}_\delta(\bar{x})$ such that $\Tilde x\ne\bar{x}$ and $\Tilde x$ is  {an accumulation point} of $\{x^k\}$. It follows from Theorem~\ref{limitingpoint} that $\Tilde x$ is a stationary point of $\varphi$, which contradicts the strong convexity of $\varphi$ on $\mathbb{B}_\delta(\bar{x})$. Supposing next that $\{x^{k_j}\}$ is an arbitrary subsequence of $\{x^k\}$ with $x^{k_j}\to\bar{x}$ as $j\to\infty$, we check that
\begin{equation}\label{Ostrowski2}
\lim_{j\to\infty}\|x^{k_j+1}-x^{k_j}\|=0.
\end{equation}
Indeed, find by Claim~2 such $\gamma>0$ that \eqref{xk+1(2)} holds,  which implies that
$$
\|x^{k_j+1}- x^{k_j}\|^2= \tau_{k_j}^2 \|d^{k_j}\|^2 \le\|d^{k_j}\|^2 \leq \frac{\kappa}{\sigma\gamma}\left(\varphi(x^{k_j})-\varphi(x^{k_j+1}) \right) \to 0
$$
as $j\to\infty$ and thus verifies \eqref{Ostrowski2}. Employing now
\cite[Proposition~8.3.10]{JPang} ensures the convergence of $\{x^k\}$ to $\bar{x}$ as $k\to\infty$ and therefore completes the proof of this claim.\\[1ex]
{\bf Claim~4:} {\em The convergence rate of $\{\varphi(x^k)\}$ is at least Q-linear, while the convergence rates of $\{x^k\}$ and $\{\|\nabla\varphi(x^k)\| \}$ are at least R-linear.} Indeed, the strong convexity of $\varphi$ on $\mathbb{B}_\delta(\bar{x})$ implies that
\begin{equation}\label{second-growth2}
\varphi(x)\ge\varphi(u)+\langle\nabla\varphi(u),x-u\rangle+\frac{1}{2\kappa}\|x-u\|^2\;\mbox{ and }\;\langle\nabla\varphi(x)-\nabla \varphi(u),x-u\rangle\ge\frac{1}{\kappa}\|x-u\|^2
\end{equation}
for all $x,u\in\mathbb{B}_\delta(\bar{x})$. By the convergence $x^k\to\bar{x}$ we have that $x^k\in U$ for all $k$ sufficiently large, which we assumed from now on. Substituting $x:=x^k$ and $u:=\bar{x}$ into \eqref{second-growth2} and then using the Cauchy-Schwarz inequality together with $\nabla\varphi(\bar{x})=0$ yield the estimates
\begin{equation}\label{second-growth12}
\varphi(x^k)\ge\varphi(\bar{x})+\frac{1}{2\kappa}\|x^k-\bar{x}\|^2\;\mbox{ and}
\end{equation}
\begin{equation}\label{strongineq2}
\|\nabla\varphi(x^k)\|\ge\frac{1}{\kappa}\|x^k-\bar{x}\|.
\end{equation}
The local Lipschitz continuity of $\nabla\varphi$ around $\bar{x}$ and the result of \cite[Lemma~A.11]{Solo14} ensure the existence of a positive number $\ell$ such that
\begin{equation}\label{Lipschitzineq2}
\varphi(x^k)-\varphi(\bar{x})=|\varphi(x^k)-\varphi(\bar{x})-\langle\nabla\varphi(\bar{x}),x^k-\bar{x}\rangle|\le\frac{\ell}{2}\|x^k-\bar{x}\|^2.
\end{equation}
Moreover, since $-\nabla\varphi(x^{k}) -\mu_k d^k\in\partial^2\varphi(x^{k})(d^{k})$, by using \cite[Theorem~1.44]{Mordukhovich06} we have
\begin{equation}\label{gradientell}
\|\nabla \varphi(x^k) + \mu_k d^k\|\le\ell \|d^k\|.
\end{equation}
It follows from the convergence $x^k \to \bar{x}$ and $\nabla\varphi(\bar{x})=0$ that $\mu_k = c\|\nabla\varphi(x^k)\|\to 0$ as $k\to\infty$, which implies that $\mu_k \le \ell$. Combining the latter with \eqref{gradientell} gives us the estimates
\begin{equation}\label{Lip}
\|\nabla \varphi(x^k)\| \le\|\nabla \varphi(x^k) + \mu_k d^k\| + \mu_k \|d^k\|\le 2\ell \|d^k\|.
\end{equation}
By Claim~2 we have that $\{\tau_k\}$ is bounded from below by some constant $\gamma>0$ and that
$$
\varphi(x^{k})-\varphi(x^{k+1})\ge\frac{\sigma\gamma}{\kappa}\|d^{k}\|^2,
$$
which together with \eqref{Lip} yields the inequality
\begin{equation}\label{ineq2+}
\varphi(x^{k})-\varphi(x^{k+1})\ge\frac{\sigma\gamma}{4\kappa\ell^2}\|\nabla\varphi(x^k) \|^2.
\end{equation}
Combining finally \eqref{strongineq2}, \eqref{Lipschitzineq2}, and \eqref{ineq2+} and then applying Lemma~\ref{QRlinear} with the sequences $\alpha_k:=\varphi(x^k)-\varphi(\bar{x})$, $\beta_k:= \|\nabla\varphi(x^k)\|$, $\gamma_k:= \|x^k-\bar{x}\|$ and positive numbers $c_1:= (\sigma \gamma)/(4\kappa\ell^2)$, $c_2:= 1/\kappa$, and $c_3:=\ell/2$, we complete the verification of all the conclusions of this claim.
\\[1ex]
{\bf Claim~5:} {\em  {$\|x^k+d^k-\ox\|=o(\|x^k-\ox\|)$} provided that $\nabla\varphi$ is semismooth$^*$ at $\bar{x}$.} Indeed, the subadditivity property of coderivatives taken from \cite[Lemma~5.6]{BorisKhanhPhat} tells us that
$$
\partial^2\varphi(x^k)(d^k)\subset\partial^2\varphi(x^k)(x^k+d^k-\bar{x})+\partial^2\varphi(x^k)(-x^k+\bar{x}).
$$
 {Since $-\nabla\varphi(x^k)-\mu_k d^k\in\partial^2\varphi(x^k)(d^k)$, there exists $v^k\in\partial^2\varphi(x^k)(-x^k+\bar{x})$ such that
$$
-\nabla\varphi(x^k)-\mu_kd^k-v^k\in\partial^2\varphi(x^k)(x^k+d^k-\bar{x}).
$$
For large $k\in\N$ with $x^k\in\mathbb{B}_\delta(\ox)$, applying \eqref{uniformPSD} yields
\begin{equation}\label{directiondk}
\langle\nabla\varphi(x^k),-d^k\rangle\ge\big(\kappa^{-1}+\mu_k\big)\|d^k\|^2 \ge\kappa^{-1}\|d^k\|^2,
\end{equation}
$$
\langle-\nabla\varphi(x^k)-\mu_kd^k-v^k, x^k+d^k-\bar{x} \rangle \geq \frac{1}{\kappa}\|x^k+d^k-\bar{x}\|^2.
$$
Using again the Cauchy-Schwarz inequality together with $\nabla\varphi(\ox)=0$ ensures that
\begin{equation} \label{super2}
\|d^k\|\le\kappa\|\nabla\varphi(x^k)\|=\kappa\|\nabla\varphi(x^k)-\nabla\varphi(\bar{x})\|\le\kappa\ell\|x^k-\bar{x}\|,
\end{equation}
\begin{equation} \label{super1}
\|x^k+d^k-\bar{x}\|\le\kappa \|\nabla\varphi(x^k)+v^k+\mu_k d^k\|\le\kappa\left(\|\nabla\varphi(x^k)+v^k\|+\mu_k\|d^k\|\right).
\end{equation}
Furthermore, it follows from the Lipschitz continuity of $\nabla\varphi$ on $\mathbb{B}_\delta(\bar{x})$ and from $\nabla\varphi(\bar{x})=0$ that
\begin{equation}\label{muk}
\mu_k = c\|\nabla\varphi(x^k)\|  = c\|\nabla\varphi(x^k)-\nabla\varphi(\bar{x}) \|\le\ell\|x^k-\bar{x}\|.
\end{equation}
Combining \eqref{super2}, \eqref{super1}, \eqref{muk} tells us that
\begin{equation}\label{estiLM}
\|x^k+d^k-\ox\|\leq\kappa\|\nabla\varphi(x^k)+v^k\|+\kappa^2\ell^2\|x^k-\ox\|^2.
\end{equation}
Using now the semismooth$^*$ property of the gradient mapping $\nabla\varphi$ at $\bar{x}$ together with the stationarity condition $\nabla\varphi(\bar{x})=0$ implies by \cite[Lemma~5.5]{BorisKhanhPhat} that
\begin{equation}\label{semi22}
\|\nabla\varphi(x^k)+v^k\|=\|\nabla\varphi(x^k)-\nabla\varphi(\bar{x})+v^k\|=o(\|x^k-\bar{x}\|).
\end{equation}
It follows from \eqref{estiLM} and \eqref{semi22} that $\|x^k+d^k-\bar{x}\| = o(\|x^k-\bar{x}\|)$ as $k\to\infty$, which verifies this claim}.\\[1ex]
{\bf Claim~6:} {\em We have $\tau_k=1$ for all large $k\in\N$ provided that $\nabla\varphi$ is semismooth$^*$ at $\bar{x}$ and that either condition {\rm(a)}, or condition {\rm(b)} of the theorem holds}. To proceed, it suffices to verify the estimate in \eqref{backtrackinghold} under both conditions (a) and (b). {If (a) is satisfied}, then $\nabla\varphi$ is semismooth at
$\bar{x}$. Then this estimate and the assertion of the claim follows directly by \eqref{directiondk} and  {Proposition \ref{acceptancesm}}. Assuming now the condition in (b) and using Claim~5, we easily see that $\{d^k\}$ converges to $0$ and that $x^k+d^k\to\bar{x}$ as $k \to\infty$. Employing \eqref{second-growth2} justifies \eqref{growthdk}. Then \eqref{backtrackinghold} follows from Lemma~\ref{Maratos} and thus completes the verification of the claim.\\[1ex]
{\bf Claim~7:} {\em The Q-superlinear convergence holds in both cases ${\rm(a)}$ and ${\rm(b)}$ of ${\rm(iii)}$.} Indeed, we get from Claim~6 that $\tau_k=1$ for all $k$ sufficiently large. It follows from   Claim~5 \color{black}
that
$$
\|x^{k+1}-\bar{x}\|=\|x^k+\tau_k d^k-\bar{x}\|=\|x^k +d^k-\bar{x}\|=o(\|x^k-\bar{x}\|)\;\text{ as }\;k\to\infty,
$$
which justifies the Q-superlinear convergence of $\{x^k\}$ in both cases. The Q-superlinear convergence of $\{\varphi(x^k) \}$ follows immediately from \eqref{second-growth12} and \eqref{Lipschitzineq2}, while the Q-superlinear convergence of $\{\nabla \varphi(x^k) \}$ is a consequence of \eqref{strongineq2} and \eqref{muk}. This completes the proof of the theorem.
\end{proof}\vspace*{-0.15in}

 \begin{Remark}[\bf comparison with related globalized Newton-type algorithms] {\rm Observe the following:

{\bf(i)} In contrast to the generalized damped Newton algorithm (Algorithm \ref{LS}), the solvability of subproblems and the behavior of accumulation points in Algorithm \ref{LS2} are guaranteed merely under the {\em positive-semidefiniteness} of $\partial^2\varphi(x)$ in Theorem \ref{limitingpoint}. To achieve the convergence of the iterative sequence $\{x^k\}$ in Algorithm~\ref{LS2}, we only need the metric regularity of $\nabla\varphi$ around the accumulation point $\ox$ instead of the positive-definiteness of $\partial^2\varphi(x)$ for all $x \in \Omega$, which is the key assumption of the convergence in Algorithm~\ref{LS}.  Note also that this is just a sufficient condition to ensure the convergence of Algorithm~\ref{LS2}. The crucial open question we will pursue in our future research is whether it is possible to replace the metric regularity of $\nabla\varphi$ in Theorem~\ref{linearcon2} by a weaker assumption. One of the natural assumptions of this type is that $\|\nabla\varphi(x)\|$ provides a local error bound near the accumulation point, which was investigated in, e.g., \cite{DYF,LFQY,YF} in different settings.
 
{\bf(ii)} The generalized regularized Newton methods via the Bouligand Jacobian can be found in Pang and Qi \cite{PQ95} and in the book \cite[Section~8.3.3]{JPang}. Their method aims to solve the optimization problem 
\begin{equation}
\min\;\varphi(x)\;\text{ subject to }\;x\in P,
\end{equation}
where $P$ is a nonempty polyhedron in $\R^n$, $\varphi$ is a convex $\mathcal{C}^{1,1}$ function defined on an open convex set $\Omega \subset\R^n$ containing $P$.  
In the case of unconstrained minimization \eqref{opproblem} with $P=\R^n$, the generalized regularized Newton method via the Bouligand Jacobian requires to find direction $d^k$ as solutions to the equation
$$
 -\nabla\varphi(x^k)= (A^k+\epsilon_k I) d^k,\quad \text{where }\; A^k \in \partial_B \nabla\varphi(x^k), \; \text{and } \; \epsilon_k> 0.
$$ 
The key assumption to guarantee the convergence of their methods is that all the matrices in $\partial_B\nabla\varphi(\ox)$ are nonsingular, where $\ox$ is the accumulation point of the iterative sequence $\{x^k\}$ generated by their method; see, e.g., \cite[Theorem~8.3.19]{JPang}. This assumption is weaker than our assumption that $\nabla\varphi$ is metrically regular around $(\ox,0)$ due to the coderivative criterion \eqref{Morcri} and the inclusion
$$
\partial_B\nabla\varphi(\ox)w \subset\partial^2\varphi(\ox)(w) \quad \text{for all }\; w \in \R^n. 
$$
Observe to this end that the metric regularity property is defined for arbitrary set-valued mappings, and it is used for subgradient ones in this paper to guarantee the convergence; see Section~\ref{sec:dampnon}. Note also the calculus rules developed for the $\partial_B \nabla\varphi$ are more limited in comparison with full calculus available for $\partial^2\varphi$.}
\end{Remark}\vspace*{-0.2in}

\color{black}\section{Coderivative-Based Newton Methods in Composite Optimization}\label{sec:dampnon}\vspace*{-0.05in}

In this section, we consider a broad and highly important class of optimization problems given by
\begin{eqnarray}\label{QP0}
\text{minimize }\;\varphi(x):=f(x)+g(x),\quad x\in\R^n,
\end{eqnarray}
where $f\colon\R^n\to\R$ is a convex and smooth function, while the regularizer $g\colon\R^n\to\oR$ is a convex and extended-real-valued one. This class is known as problems of {\em convex composite optimization}.

Problems written in format \eqref{QP0} frequently arise in many applied areas including machine learning, compressed sensing, image processing, etc. Since the regularizer $g$ is generally extended-real-valued, the unconstrained format \eqref{comp} encompasses problems of {\em constrained optimization}. If, in particular, $g$ is the indicator function of a closed and convex set, then \eqref{comp} becomes a constrained optimization problems studied, e.g., in the book \cite{nw} with numerous applications.

One of the {most well-recognized and applied algorithms} to solve problems \eqref{comp} is the forward-backward splitting (FBS), or proximal splitting, method \cite{cp,lm}. Since this method is of first order, its rate of convergence is at most linear. Another approach to solve \eqref{comp} is to use second-order methods such as proximal Newton methods, proximal quasi-Newton methods, etc.; see, e.g., \cite{bf,lss,myzz}. Although the latter approach has several benefits over first-order methods (as fast convergence and high accuracy), a severe limitation of these methods is the cost of solving subproblems.\vspace*{0.03in}

To develop here new globally convergent Newton methods to solve convex composite optimization problems of type \eqref{QP0}, we first recall the classical notions of convex and variational analysis; see, e.g., \cite{Rockafellar98}. Given an extended-real-valued, proper, l.s.c.\ function $\varphi\colon\R^n\to\oR$ and a number $\gamma>0$, the {\em Moreau envelope} $e_\gamma\varphi$ and the {\em proximal mapping} $\textit{\rm Prox}_{\gamma\varphi}$ are defined by, respectively,
\begin{equation}\label{Moreau}
e_\gamma\varphi(x):=\inf_{y \in \R^n}\left\{\varphi(y)+\frac{1}{2\gamma}\|y-x\|^2\right\},
\end{equation}
\begin{equation}\label{Prox}
\textit{\rm Prox}_{\gamma\varphi}(x):=\underset{y\in \R^n}{\operatorname{argmin}}\left\{\varphi(y)+\frac{1}{2\gamma}\|y-x\|^2\right\}.
\end{equation}
If $\lambda=1$, we use the notation $e_\varphi(x)$ and $\text{\rm Prox}_{\varphi}(x)$ in \eqref{Moreau} and \eqref{Prox}, respectively. These notions have been well investigated in variational analysis and optimization as efficient tools of regularization and approximation of nonsmooth functions. Given a closed set $\emp\ne C\subset\R^n$, the \textit{orthogonal projection mapping} $P_C:\R^n\rightrightarrows\R^n$ is
$$
P_C(x):=\operatorname{argmin}_{y\in C}\|y-x\|\quad\text{for all }\; x\in \R^n.
$$
It is clear that if $g:\R^n\to (-\infty,\infty]$ be defined by $g(x):=\delta_C(x)$, we have
$$
\textit{\rm Prox}_{\gamma g}(x)=P_C(x)\quad\text{for all }\; x\in\R^n,\;\gamma >0. 
$$
More recently, the following extended notion, known now as the {\em forward-backward envelope}, has been introduced by Patrinos and Bemporad \cite{pb} for problems of convex composite optimization.

\begin{Definition}[\bf forward-backward envelope]\label{def:FBE} Let $\varphi=f+g$ be as in \eqref{QP0}, and let $\gamma>0$. The \textsc{forward-backward envelope} $($FBE$)$ of $\varphi$ with parameter $\gamma$ is
\begin{equation}\label{FBE}
\varphi_\gamma(x):=\inf_{y\in\R^n} \left\{f(x)+\langle\nabla f(x),y -x\rangle+g(y)+\frac{1}{2\gamma}\|y-x\|^2\right\},
\end{equation}
{which, by construction \eqref{Moreau} of the Moreau envelope, is represented by}
\begin{equation}\label{FBE2}
\varphi_\gamma(x)=f(x)-\frac{\gamma}{2}\|\nabla f(x)\|^2+ e_\gamma g\big(x-\gamma\nabla f(x)\big).
\end{equation}
\end{Definition}
The FBE has already been used for developing some efficient algorithms to solve nonsmooth optimization problems; see, e.g., \cite{pb,stp,stp2} with further references therein. The following results taken from \cite{pb,stp} list those properties of the forward-backward envelope for convex composite extended-real-valued functions that are needed to derive the main results of this section.

\begin{Proposition}[\bf basic properties of FBE] \label{diffFBE} Let $\varphi=f+g$ be as in \eqref{QP0}, and let $\gamma>0$. Suppose that $f$ is ${\cal C}^2$-smooth on $\R^n$, and that $\nabla f$ is Lipschitz continuous on $\R^n$ with modulus $\ell>0$. Then we have:

{\bf(i)} The FBE $\varphi_\gamma$ of $\varphi$ is ${\cal C}^1$-smooth on $\R^n$ with the gradient
\begin{equation}\label{gradFBE}
\nabla\varphi_\gamma(x)=\gamma^{-1}\big(I-\gamma \nabla^2f(x)\big)\big(x-\text{\rm Prox}_{\gamma g}(x-\gamma \nabla f(x))\big),\quad x\in\R^n.
\end{equation}
Moreover, the set of optimal solutions to \eqref{QP0} agrees with the stationary points of $\ph_\gg$ by
$$
\text{\rm argmin}\,\varphi:=\text{\rm zer}\,\nabla \varphi_\gamma=\big\{x\in\R^n\;\big|\;\nabla\varphi_\gamma(x)=0\big\}\quad\text{for all }\;\gamma\in(0,1/\ell).
$$

{\bf(ii)} Let $f(x):=\frac{1}{2}\langle Ax,x\rangle+\langle b,x\rangle+\alpha$, where $A\in\R^{n\times n}$ is a
positive-semidefinite symmetric matrix, $b\in\R^n$, and $\alpha\in\R$. Define the numbers
\begin{equation*}
L:= 2\left(1-\gamma \lambda_{\text{\rm min}(A)}\right)/\gamma\;\mbox{ and }\;K:=\text{\rm min}\big\{(1-\gamma\lambda_{\text{\rm min}(A)})\lambda_{\text{\rm min}(A)}, (1-\gamma\lambda_{\text{\rm max}(A)})\lambda_{\text{\rm max}(A)}\big\}.
\end{equation*}
Then for all $\gamma\in(0,1/\ell)$, the FBE $\varphi_\gamma$ is convex and its gradient $\nabla \varphi_\gamma$ is globally Lipschitzian on $\R^n$ with modulus $L$. If $A$ is positive-definite, then $\varphi_\gamma$ is strongly convex with modulus $K$.
\end{Proposition}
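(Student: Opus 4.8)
The plan is to obtain part (i) from the standard first-order calculus of the Moreau envelope and the chain rule, and then to derive part (ii) by specializing the gradient formula of (i) to a quadratic $f$ and reading off the required spectral bounds.

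For (i) I would start from representation \eqref{FBE2} and recall the classical fact (see \cite{Rockafellar98}) that for proper l.s.c.\ convex $g$ the Moreau envelope $e_\gamma g$ is $\mathcal{C}^1$ with $\nabla e_\gamma g=\gamma^{-1}(I-\mathrm{Prox}_{\gamma g})$, the proximal map being single-valued and firmly nonexpansive. Since $f\in\mathcal{C}^2$, each summand $f$, $\tfrac{\gamma}{2}\|\nabla f\|^2$, and $e_\gamma g(x-\gamma\nabla f(x))$ is $\mathcal{C}^1$, so $\varphi_\gamma\in\mathcal{C}^1$. Differentiating by the chain rule, using that $\nabla^2 f(x)$ is symmetric so $(I-\gamma\nabla^2 f(x))^{*}=I-\gamma\nabla^2 f(x)$, and combining the contribution $(I-\gamma\nabla^2 f(x))\nabla f(x)$ with $(I-\gamma\nabla^2 f(x))\gamma^{-1}\bigl(u-\mathrm{Prox}_{\gamma g}(u)\bigr)$ where $u=x-\gamma\nabla f(x)$, the terms telescope to \eqref{gradFBE}. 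For the identity $\mathrm{argmin}\,\varphi=\mathrm{zer}\,\nabla\varphi_\gamma$, convexity of $f$ with $\ell$-Lipschitz $\nabla f$ gives $0\preceq\nabla^2 f(x)\preceq\ell I$, hence $I-\gamma\nabla^2 f(x)\succeq(1-\gamma\ell)I\succ 0$ is nonsingular for $\gamma\in(0,1/\ell)$; therefore $\nabla\varphi_\gamma(x)=0$ iff $x=\mathrm{Prox}_{\gamma g}(x-\gamma\nabla f(x))$, which is exactly the fixed-point characterization $0\in\nabla f(x)+\partial g(x)$ of the minimizers of the convex function $\varphi$.

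For (ii) I specialize to $\nabla f(x)=Ax+b$, $\nabla^2 f\equiv A$, $\ell=\lambda_{\max}(A)$, and set $Q:=I-\gamma A$, which is symmetric positive-definite for $\gamma\in(0,1/\ell)$. Inserting into \eqref{gradFBE} and using $\nabla e_\gamma g(z)=\gamma^{-1}(z-\mathrm{Prox}_{\gamma g}(z))$ with $z=Qx-\gamma b$, a short computation rewrites the gradient as $\nabla\varphi_\gamma(x)=QAx+Qb+Q\nabla e_\gamma g(Qx-\gamma b)$. This exhibits $\varphi_\gamma$, up to an additive constant, as the sum of the quadratic $q(x)=\tfrac12\langle QAx,x\rangle+\langle Qb,x\rangle$ and the composition $\psi(x)=e_\gamma g(Qx-\gamma b)$: $\psi$ is convex as a convex function precomposed with an affine map, while $QA=A(I-\gamma A)$ is symmetric with eigenvalues $\lambda(1-\gamma\lambda)\ge 0$ for $\lambda\in[0,\lambda_{\max}(A)]$, so $q$ and hence $\varphi_\gamma$ are convex. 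When $A\succ 0$ the eigenvalues of $QA$ are the values $\lambda(1-\gamma\lambda)$ of a concave function of $\lambda$ on $[\lambda_{\min}(A),\lambda_{\max}(A)]$, whose minimum is attained at an endpoint and equals $K$; thus $QA\succeq KI$ and $\varphi_\gamma-\tfrac{K}{2}\|\cdot\|^2$ is convex, i.e.\ $\varphi_\gamma$ is strongly convex with modulus $K$.

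The main remaining task, and the one I expect to be the only delicate point, is the Lipschitz bound. I would estimate the increment of the gradient directly: writing $p=\mathrm{Prox}_{\gamma g}(Qx-\gamma b)$, $p'=\mathrm{Prox}_{\gamma g}(Qx'-\gamma b)$ and exploiting the algebraic cancellation $QA+\gamma^{-1}Q^2=\gamma^{-1}I-A$, one gets
\[
\nabla\varphi_\gamma(x)-\nabla\varphi_\gamma(x')=(\gamma^{-1}I-A)(x-x')-\gamma^{-1}Q(p-p').
\]
This cancellation is the crux, since applying the triangle inequality to the two original summands $QA$ and $Q\nabla e_\gamma g$ separately is too lossy to reach $L$. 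Then $\|\gamma^{-1}I-A\|=\gamma^{-1}-\lambda_{\min}(A)$, and nonexpansiveness of the proximal map gives $\|p-p'\|\le\|Q(x-x')\|\le\|Q\|\,\|x-x'\|$ with $\|Q\|=1-\gamma\lambda_{\min}(A)$, yielding the Lipschitz constant $(\gamma^{-1}-\lambda_{\min}(A))(2-\gamma\lambda_{\min}(A))\le 2(1-\gamma\lambda_{\min}(A))/\gamma=L$, as claimed. Everything else reduces to the standard Moreau-envelope calculus and elementary spectral estimates.
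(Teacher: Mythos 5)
Your argument is correct and complete. Note that the paper itself offers no proof of this proposition: it is stated as a collection of known facts imported from the cited works of Patrinos--Bemporad and Stella--Themelis--Patrinos, so there is no ``paper proof'' to compare against. Your write-up is essentially a self-contained verification along the same lines as those references. All the key steps check out: the chain-rule computation does telescope to \eqref{gradFBE} because $\nabla e_\gamma g=\gamma^{-1}(I-\mathrm{Prox}_{\gamma g})$ and $\nabla^2 f(x)$ is symmetric; the invertibility of $I-\gamma\nabla^2 f(x)$ for $\gamma\in(0,1/\ell)$ correctly reduces $\nabla\varphi_\gamma(x)=0$ to the fixed-point characterization $x=\mathrm{Prox}_{\gamma g}(x-\gamma\nabla f(x))$, i.e.\ $0\in\nabla f(x)+\partial g(x)$; the decomposition $\varphi_\gamma=\tfrac12\langle QAx,x\rangle+\langle Qb,x\rangle+e_\gamma g(Qx-\gamma b)+\mathrm{const}$ with $Q=I-\gamma A$ gives convexity from $QA\succeq 0$ and strong convexity with modulus $K$ from the concavity of $\lambda\mapsto\lambda(1-\gamma\lambda)$ on $[\lambda_{\min}(A),\lambda_{\max}(A)]$; and the algebraic identity $QA+\gamma^{-1}Q^2=\gamma^{-1}I-A$ combined with nonexpansiveness of the proximal map yields the Lipschitz constant $\gamma^{-1}(1-\gamma\lambda_{\min}(A))(2-\gamma\lambda_{\min}(A))\le L$, which indeed establishes the claimed modulus $L$ (your observation that a naive triangle inequality on the two summands would be too lossy is the right one). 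The only cosmetic remark is that the last display of your Lipschitz estimate silently uses $\gamma\lambda_{\min}(A)\ge 0$, which holds since $A$ is positive-semidefinite; it is worth stating explicitly.
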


It follows from Proposition~\ref{diffFBE} that using the forward-backward envelope \eqref{FBE} makes it possible to pass from  {the nonsmooth composite optimization problem} \eqref{QP0} to the unconstrained one:
\begin{equation}\label{FBEop}
\min \quad \varphi_\gamma(x)\quad \text{subject to }\;x\in\R^n
\end{equation}
with a smooth cost function. Thanks to the explicit calculations of $\varphi_\gamma$ in \eqref{FBE2} and its gradient \eqref{gradFBE}, we can extend Algorithm~\ref{LS} and Algorithm~\ref{LS2} to cover problem \eqref{QP0} via passing to \eqref{FBEop}. The implementation of this procedure requires revealing appropriate assumptions on $\varphi$ in \eqref{QP0}, which ensure the fulfillment of those for $\ph_\gg$ and thus allow us to apply the results of Sections~\ref{sec:dampedC11}, \ref{sec:modifiedC11} to the class of nondifferentiable convex problems \eqref{FBEop}.

Note that \eqref{FBEop} is not generally a problem of ${\cal C}^{1,1}$ optimization, since Proposition~\ref{diffFBE}(i) does not ensure the Lipschitz continuity of $\nabla\ph_\gg$. The latter property is guaranteed by Proposition~\ref{diffFBE}(ii) when $f$ is a quadratic function and thus problem \eqref{QP0} is written as
\begin{eqnarray}\label{QP}
\text{minimize }\;\varphi(x):=\frac{1}{2}\langle Ax,x\rangle+\langle b,x\rangle+\alpha+g(x),\quad x\in\R^n,
\end{eqnarray}
where $A\in\R^{n\times n}$ is a {\em positive-semidefinite} symmetric matrix, $b\in\R^n$, and $\alpha\in\R$. From now on, this is our standing framework for the rest of the section.

Let us highlight that problems of type \eqref{QP} are important for their own sake, while they also arise frequently as {\em subproblems} for various efficient numerical algorithms including {\em sequential quadratic programming methods} (SQP) \cite{Bonnans,Solo14}, {\em augmented Lagrangian methods} \cite{He69,lsk,Po69}, {\em proximal Newton methods} \cite{lss,myzz}, etc. Observe furthermore that optimization problems of this type often appear in practical models related, e.g., to machine learning and statistics. In particular,  {\em Lasso problems} considered in Section~\ref{Lassosec} can be written in form \eqref{QP}. Moreover, there are some other important classes of problems that are modeled as \eqref{QP}. They include problems in {\em support vector machine} \cite{HCL}, {\em convex clustering} \cite{BMPS,she}, {\em constrained quadratic optimization} \cite{nw}, etc.\vspace*{0.03in}

Now we start the procedure of designing and justifying globally convergent generalized Newton algorithms to solve the convex composite problem \eqref{QP} by applying the corresponding results for the ${\cal C}^{1,1}$ optimization problem \eqref{FBEop} obtained in Sections~\ref{sec:dampedC11} and \ref{sec:modifiedC11}. The first step is  {to express} the generalized Hessian of  {the FBE} $\ph_\gg$ from \eqref{FBEop} in terms of the given data of \eqref{QP}. 

\begin{Proposition}[\bf calculating the generalized Hessian of FBE] \label{calculatepsi} Let $\varphi=f+g$ be as in \eqref{QP}, and let $\gamma >0$ be such that  {$B:=I -\gamma A$ is positive-definite}.  Then we have the calculation formula
\begin{equation}
\bar{z} \in \partial^2\varphi_\gamma (\bar{x})(w) \iff B^{-1} \bar{z} - Aw \in \partial^2 g \left(\text{\rm Prox}_{\gamma g}(\bar{u}),\frac{1}{\gamma}\big(\bar{u} -\text{\rm Prox}_{\gamma g}(\bar{u})\big) \right)\big(w-\gamma B^{-1}\bar{z}\big)
\end{equation}
for any $\bar{x} \in \R^n$, $w \in \R^n$, and $\bar{u}:= \bar{x}- \gamma(A\bar{x}+b)$.
\end{Proposition}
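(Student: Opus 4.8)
The plan is to realize the graph of $\nabla\varphi_\gamma$ as the image of $\gph\partial g$ under an invertible affine change of coordinates, and then to read off the coderivative from the transformation rule $N_{\mathcal T(\Omega)}(\mathcal T z)=(T^*)^{-1}N_\Omega(z)$ for the limiting normal cone under an affine isomorphism $\mathcal T$ with linear part $T$ (a standard consequence of \eqref{lnc}, valid because $\mathcal T$ is a global diffeomorphism). Since $\varphi_\gamma$ is of class $\mathcal C^{1,1}$ by Proposition~\ref{diffFBE}(ii), the identity $\partial^2\varphi_\gamma(\bar x)(w)=D^*(\nabla\varphi_\gamma)(\bar x)(w)$ holds, so it suffices to describe $N_{\gph\nabla\varphi_\gamma}$ at $(\bar x,\bar z)$.

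First, I would specialize \eqref{gradFBE} to the quadratic case, where $\nabla f(x)=Ax+b$ and $\nabla^2f(x)=A$. Writing $B:=I-\gamma A$ (symmetric and, by assumption, positive-definite, hence invertible) and $u(x):=x-\gamma(Ax+b)=Bx-\gamma b$, this yields
\[
\nabla\varphi_\gamma(x)=\gamma^{-1}B\big(x-\text{\rm Prox}_{\gamma g}(u(x))\big).
\]
Using the resolvent identity $p=\text{\rm Prox}_{\gamma g}(u)\iff\gamma^{-1}(u-p)\in\partial g(p)$, a point $(p,v)\in\gph\partial g$ corresponds to the point $(x,z)\in\gph\nabla\varphi_\gamma$ determined by $v=\gamma^{-1}(Bx-\gamma b-p)$ and $z=\gamma^{-1}B(x-p)$. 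Solving these relations for $(x,z)$ and simplifying with $B^{-1}-I=\gamma B^{-1}A$ gives the affine map $\mathcal T\colon(p,v)\mapsto(x,z)$,
\[
x=B^{-1}p+\gamma B^{-1}v+\gamma B^{-1}b,\qquad z=Ap+v+b,
\]
whose linear part is $T=\begin{pmatrix}B^{-1}&\gamma B^{-1}\\ A&I\end{pmatrix}$. The Schur complement of the lower-right block equals $B^{-1}-\gamma B^{-1}A=B^{-1}B=I$, so $T$ is invertible; this is exactly where positive-definiteness of $B$ enters. Consequently $\gph\nabla\varphi_\gamma=\mathcal T(\gph\partial g)$, and writing $(\bar p,\bar v):=\mathcal T^{-1}(\bar x,\bar z)$ we recover $\bar p=\text{\rm Prox}_{\gamma g}(\bar u)$ and $\bar v=\gamma^{-1}(\bar u-\text{\rm Prox}_{\gamma g}(\bar u))$ with $\bar u=\bar x-\gamma(A\bar x+b)$.

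Second, I would translate the normal-cone identity $N_{\gph\nabla\varphi_\gamma}(\bar x,\bar z)=(T^*)^{-1}N_{\gph\partial g}(\bar p,\bar v)$ into second-order language and finish by linear algebra. By the definition of the coderivative, $\bar z\in\partial^2\varphi_\gamma(\bar x)(w)$ means $(\bar z,-w)\in N_{\gph\nabla\varphi_\gamma}(\bar x,\bar z)$, equivalently $T^*(\bar z,-w)\in N_{\gph\partial g}(\bar p,\bar v)$. Since $B^*=B$ and $A^*=A$, we have $T^*=\begin{pmatrix}B^{-1}&A\\ \gamma B^{-1}&I\end{pmatrix}$, whence
\[
T^*\begin{pmatrix}\bar z\\ -w\end{pmatrix}=\begin{pmatrix}B^{-1}\bar z-Aw\\ \gamma B^{-1}\bar z-w\end{pmatrix}.
\]
Recalling that $(r,-s)\in N_{\gph\partial g}(\bar p,\bar v)$ is precisely $r\in\partial^2 g(\bar p,\bar v)(s)$, and setting $r:=B^{-1}\bar z-Aw$ and $s:=w-\gamma B^{-1}\bar z$ (so that $-s=\gamma B^{-1}\bar z-w$ matches the second component above), we obtain the asserted equivalence.

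I expect the main obstacle to lie in the first step: correctly inverting the defining relations to produce $\mathcal T$, verifying invertibility of the block matrix $T$ (which rests on $B$ being invertible, i.e.\ on the positive-definiteness hypothesis), and justifying that the full limiting normal cone — not merely its regular counterpart — transforms via $(T^*)^{-1}$. This last point is legitimate because $\mathcal T$ is an affine isomorphism, hence a global diffeomorphism, under which the computation for regular normals passes to the limit unchanged; the subsequent evaluation of $T^*(\bar z,-w)$ and the identification of $r$ and $s$ is then routine.
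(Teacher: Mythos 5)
Your proof is correct, but it takes a genuinely different route from the paper's. The paper works at the level of second-order calculus: it splits $\varphi_\gamma$ via \eqref{FBE2} into the quadratic part $f-\frac{\gamma}{2}\|\nabla f\|^2$ (whose Hessian is $BA$) plus $h:=e_\gamma g\circ(\text{affine})$, applies the second-order sum rule and the second-order chain rule to get $\partial^2\varphi_\gamma(\bar x)(w)=BAw+B\,\partial^2 e_\gamma g(\bar u)(Bw)$, and then invokes a known formula (Lemma~6.4 of \cite{BorisKhanhPhat}) expressing $\partial^2 e_\gamma g$ through $\partial^2 g$, finishing with the identity $Bw+\gamma Aw=w$. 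You instead exhibit $\gph\nabla\varphi_\gamma$ directly as the image of $\gph\partial g$ under an explicit affine isomorphism $\mathcal T$ (built from the resolvent identity) and transport the limiting normal cone via $(T^*)^{-1}$; your Schur-complement check, the computation of $T^*(\bar z,-w)$, and the identification of the base point $(\bar p,\bar v)$ all check out, and the final inclusion agrees with the stated formula. In effect your argument inlines the proof of the Moreau-envelope lemma that the paper cites, trading three calculus rules for a single change-of-variables for normal cones; this is more self-contained, while the paper's version is shorter given the available second-order calculus. One minor notational slip: you write the normal cone "at $(\bar x,\bar z)$" and apply $\mathcal T^{-1}$ to $(\bar x,\bar z)$, whereas the base point of $\gph\nabla\varphi_\gamma$ is $(\bar x,\nabla\varphi_\gamma(\bar x))$ — the vector $\bar z\in\partial^2\varphi_\gamma(\bar x)(w)$ is the first component of the normal vector, not the second coordinate of the point. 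Since your subsequent identification of $(\bar p,\bar v)$ uses the graph point correctly, this does not affect the argument, but the two roles of $\bar z$ should be disentangled in a final write-up.
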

\begin{proof} Fix $x,w$, and $\ou$ as above and define the function $h: \R^n\to\overline{\R}$ by
$$
h(x):= e_{\gamma}g\big(x-\gamma(Ax+b)\big) \quad \text{for all }\; x \in \R^n.
$$
It is clear that $h$ is {continuously} differentiable with $\nabla h(\bar{x}) = (I-\gamma A)^* \nabla e_\gamma g (\bar{u}) = B\nabla e_\gamma g(\bar{u})$. It follows from the definition of FBE \eqref{FBE2} and the second-order sum rule from \cite[Proposition~1.121]{Mordukhovich06}  that
\begin{equation}\label{cal1}
\partial^2\varphi_\gamma(\bar{x})(w) = (A-\gamma A^*A)w + \partial^2 h(\bar{x})(w) = BAw + \partial^2 h(\bar{x})(w).
\end{equation}
By using the second-order chain rule from \cite[Theorem~1.127]{Mordukhovich06}, we have
\begin{equation}\label{cal2}
\partial^2h(\bar{x})(w)  = B\partial^2 e_\gamma g(\bar{u})(Bw).
\end{equation}
Combining \eqref{cal1} and \eqref{cal2} gives us the relationship
$$
\partial^2\varphi_\gamma(\bar{x})(w) = BAw + B\partial^2 e_\gamma g(\bar{u})(Bw),
$$
which in turn yields the equivalencies
$$
\bar{z} \in \partial^2\varphi_\gamma(\bar{x})(w) \iff \bar{z} - BAw \in B\partial^2e_\gamma g(\bar{u})(Bw) \iff B^{-1}\bar{z} - Aw \in \partial^2e_\gamma g(\bar{u})(Bw).
$$
Employing finally \cite[Lemma 6.4]{BorisKhanhPhat}, we arrive at the inclusion
$$
B^{-1}\bar{z} - Aw \in \partial^2 g\left(\text{\rm Prox}_{\gamma g}(\bar{u}), \frac{1}{\gamma}\big(\bar{u}-\text{\rm Prox}_{\gamma g}(\bar{u})\big)\right)(Bw -\gamma B^{-1}\bar{z}+\gamma Aw),
$$
which completes the proof of the proposition due to $Bw + \gamma Aw = w$.
\end{proof}

The next proposition shows that the metric regularity and tilt
stability of the original objective $\ph$ in \eqref{QP} is
equivalent to the corresponding properties of its FBE $\ph_\gamma$
in \eqref{FBEop}. Moreover, we get a useful estimate of the inverse
mapping of $\partial^2\ph_\gamma$ in terms of the given data of
\eqref{QP}.

\begin{Proposition}[\bf metric regularity and tilt-stability of FBE] \label{metrictilt}  Let $\varphi=f+g$ be as in \eqref{QP}, and let $\gamma>0$ be such that  {$B:=I -\gamma A$ is positive-definite}. Then for any $\ox\in\R^n$ satisfying $0\in\partial\varphi(\bar{x})$ we have:

{\bf(i)} $\|\partial^2\varphi_\gamma(\bar{x})^{-1}\|\le\|\partial^2\varphi(\bar{x},0)^{-1}\|+\gamma\|B^{-1}\|$.

{\bf(ii)} $\partial\varphi$ is metrically regular around $(\bar{x},0)$  if and only if $\nabla\varphi_\gamma$ is metrically regular
around $\ox$.

{\bf(iii)} $\bar{x}$ is a tilt-stable local minimizer of $\varphi$ if and only if $\bar{x}$ is a tilt-stable local minimizer of
$\varphi_\gamma$.
\end{Proposition}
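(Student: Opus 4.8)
The plan is to reduce all three assertions to a single exact identity between the inverse generalized Hessians at the stationary point, namely
\[
\partial^2\varphi_\gamma(\bar{x})^{-1}(\bar{z})=\partial^2\varphi(\bar{x},0)^{-1}(\bar{z})+\gamma B^{-1}\bar{z}\quad\text{for all }\;\bar{z}\in\R^n,
\]
with both sides viewed as positively homogeneous multifunctions. This identity is the heart of the matter, and everything else is bookkeeping with the Mordukhovich criterion and known tilt-stability characterizations.

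To obtain it, I would first specialize Proposition~\ref{calculatepsi} at $\bar{x}$ with $0\in\partial\varphi(\bar{x})$. Writing $\bar{v}:=-(A\bar{x}+b)$, the inclusion $0\in\nabla f(\bar{x})+\partial g(\bar{x})$ gives $\bar{v}\in\partial g(\bar{x})$, and since $\bar{u}=\bar{x}+\gamma\bar{v}$ satisfies $\tfrac1\gamma(\bar{u}-\bar{x})=\bar{v}\in\partial g(\bar{x})$, the proximal characterization yields $\text{\rm Prox}_{\gamma g}(\bar{u})=\bar{x}$ and $\tfrac1\gamma(\bar{u}-\text{\rm Prox}_{\gamma g}(\bar{u}))=\bar{v}$. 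Hence the right-hand side of Proposition~\ref{calculatepsi} involves precisely $\partial^2g(\bar{x},\bar{v})$. Next I would invoke the second-order sum rule from \cite[Proposition~1.121]{Mordukhovich06} to write $\partial^2\varphi(\bar{x},0)(w)=Aw+\partial^2g(\bar{x},\bar{v})(w)$. Substituting $w':=w-\gamma B^{-1}\bar{z}$ into the formula of Proposition~\ref{calculatepsi} and using the algebraic identity $(I-\gamma A)B^{-1}=BB^{-1}=I$ collapses the left member $B^{-1}\bar{z}-Aw$ into $\bar{z}-Aw'$, so that $\bar{z}\in\partial^2\varphi_\gamma(\bar{x})(w)$ if and only if $\bar{z}\in\partial^2\varphi(\bar{x},0)(w')$. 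Inverting this equivalence produces the displayed identity.

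Given the identity, (i) is immediate from subadditivity of the norm of positively homogeneous mappings together with $\|\gamma B^{-1}\|=\gamma\|B^{-1}\|$. For (ii) I would use the coderivative norm formula \eqref{Morcri2}, which equates metric regularity of $\nabla\varphi_\gamma$ around $\bar{x}$ (resp.\ of $\partial\varphi$ around $(\bar{x},0)$) with finiteness of $\|\partial^2\varphi_\gamma(\bar{x})^{-1}\|$ (resp.\ $\|\partial^2\varphi(\bar{x},0)^{-1}\|$); since the two inverse Hessians differ only by the bounded linear term $\gamma B^{-1}$, applying the identity in both directions shows one norm is finite exactly when the other is.

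For (iii) I would route through (ii) rather than attempt a pointwise transfer of positive-definiteness, which in fact breaks down in the direction from $\varphi_\gamma$ back to $\varphi$. Both $\varphi$ and $\varphi_\gamma$ are convex (the latter by Proposition~\ref{diffFBE}(ii), since $B\succ0$ forces $\gamma<1/\ell$), so their generalized Hessians at $\bar{x}$ are positive-semidefinite; for such functions tilt stability is equivalent to metric regularity of the (sub)gradient mapping. The forward implication follows from the Poliquin–Rockafellar characterization \cite{Poli} giving positive-definiteness, hence the Mordukhovich criterion \eqref{Morcri}, while the converse is \cite[Theorem~4.13]{dmn} (metric regularity plus positive-semidefiniteness yields a tilt-stable minimizer). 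Chaining these with part (ii) gives $\bar{x}$ tilt-stable for $\varphi$ $\iff$ $\partial\varphi$ metrically regular around $(\bar{x},0)$ $\iff$ $\nabla\varphi_\gamma$ metrically regular around $\bar{x}$ $\iff$ $\bar{x}$ tilt-stable for $\varphi_\gamma$. The main obstacle, and the step deserving the most care, is the exact inverse identity above: one must check that the second-order sum rule applies with equality (it does, as $f$ is $\mathcal{C}^2$ and $B$ is invertible) and that the evaluation point $\big(\text{\rm Prox}_{\gamma g}(\bar{u}),\tfrac1\gamma(\bar{u}-\text{\rm Prox}_{\gamma g}(\bar{u}))\big)$ genuinely coincides with $(\bar{x},\bar{v})$.
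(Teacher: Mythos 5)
Your proof is correct and follows essentially the same route as the paper: both hinge on the exact equivalence $z\in\partial^2\varphi_\gamma(\bar{x})(w)\iff z\in\partial^2\varphi(\bar{x},0)(w-\gamma B^{-1}z)$ obtained from Proposition~\ref{calculatepsi}, the identification $\text{\rm Prox}_{\gamma g}(\bar{u})=\bar{x}$ at the stationary point, the second-order sum rule, and the algebra $(I-\gamma A)B^{-1}=I$, after which (i) is bookkeeping, (ii) follows from the Mordukhovich criterion, and (iii) is chained through (ii) via the convex tilt-stability/metric-regularity characterization. The only cosmetic differences are that you use finiteness of the coderivative norm \eqref{Morcri2} rather than the kernel condition \eqref{Morcri} for (ii), and split the tilt-stability equivalence into two one-directional citations instead of the single equivalence from \cite[Proposition~4.5]{dmn} used in the paper.
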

\begin{proof} It follows from Proposition~\ref{calculatepsi} that
\begin{equation}\label{equi}
z \in \partial^2\varphi_\gamma(\bar{x})(w) \iff 0 \in Aw + \partial^2 g\left(\text{\rm Prox}_{\gamma g}(\bar{u}),\frac{1}{\gamma}\big(\bar{u} -\text{\rm Prox}_{\gamma g}(\bar{u})\big) \right)\big(w-\gamma B^{-1}z\big)
\end{equation}
with $\bar{u}$ defined therein. The convexity of $\varphi$ ensures that $\bar{x}$ is an optimal solution to \eqref{QP}, and thus $\bar{x} - \text{\rm Prox}_{\gamma g}(\bar{u}) =0$ by \cite[Theorem~27.2]{Bauschke}. Therefore, \eqref{equi} is equivalent to
\begin{equation}\label{equa1}
z\in Aw+\partial^2 g(\bar{x},-A\bar{x}-b)(w-\gamma B^{-1}z)=A(w-\gamma B^{-1}z)+\partial^2 g(\bar{x},-A\bar{x}-b)(w-\gamma B^{-1}z)+\gamma AB^{-1}z.
\end{equation}
The second-order subdifferential sum rule from \cite[Proposition~1.121]{Mordukhovich06} yields
\begin{eqnarray}\label{equa2}
A(w-\gamma B^{-1}z) + \partial^2 g(\bar{x},-A\bar{x}-b)(w-\gamma B^{-1}z)&=&\partial^2(f+ g)(\bar{x},0)(w-\gamma B^{-1}z)\nonumber \\
&=&\partial^2\varphi(\bar{x},0)(w-\gamma B^{-1}z).
\end{eqnarray}
Combining \eqref{equi}, \eqref{equa1}, and \eqref{equa2} gives us the equivalence
\begin{equation}\label{equiFBE}
z\in\partial^2\varphi_\gamma(\bar{x})(w) \iff  z \in \partial^2\varphi(\bar{x},0)(w-\gamma B^{-1}z),
\end{equation}
which verifies (i). It follows from the coderivative criterion \eqref{Morcri} and the equivalence \eqref{equiFBE} that $\partial\varphi$ is metrically regular around $(\bar{x},0)$   if and only if $\nabla\varphi_\gamma$ is metrically regular around $\ox$, which justifies assertion (ii). Finally, \cite[Proposition~4.5]{dmn} tells us that a stationary point $\bar{x}$ is a tilt-stable local minimizer of an l.s.c.\ convex function if and only if its subgradient mapping is metrically regular around $(\bar{x},0)$. Using this observation together with (ii), we obtain (iii) and complete the proof of the proposition.
\end{proof}

Now we recall some other notions of variational analysis that are used to establish the superlinear convergence of both algorithms developed in this section to solve problem \eqref{QP}. These notions, introduced by Rockafellar, are taken from the book \cite{Rockafellar98}. A set-valued mapping $S:\R^n\rightrightarrows \R^m$ is {\em proto-differentiable} at $(\bar{x},\bar{y})\in\gph S$ if for any $\bar{w}\in\R^n$, $ \bar{z}\in \underset{t\downarrow 0 \atop w\to \bar{w}}{\Limsup}\; (S(\bar{x}+tw)-\bar{y})/t $, and $t_k\downarrow 0$ there exist $w_k \to \bar{w}$ and $z_k \to \bar{z}$ such that $z_k \in (S(\bar{x}+t_kw_k)-\bar{y})/t_k$ whenever $k \in \N$.
Given $\ph\colon\R^n\to\oR$ with $\ox\in\dom\ph$, consider the family of second-order finite differences
\begin{equation*}
\Delta^2_\tau\varphi(\bar{x},v)(u):=\frac{\varphi(\bar{x}+\tau u)-\varphi(\bar{x})-\tau\langle v,u\rangle}{\frac{1}{2}\tau^2}
\end{equation*}
and define the {\em second subderivative} of $\varphi$ at $\ox$ for $v\in\R^n$ and $w\in\R^n$ by
\begin{equation*}
d^2\varphi(\ox,v)(w):=\liminf_{\tau\downarrow 0\atop u\to w}\Delta^2_\tau\varphi(\ox,v)(u).
\end{equation*}
Then $\ph$ is said to be {\em twice epi-differentiable} at $\ox$ for $v$ if for every $w\in\R^n$ and every choice of $\tau_k\downarrow 0$ there exists a sequence $w^k\to w$ such that
\begin{equation*}
\frac{\varphi(\ox+\tau_k w^k)-\varphi(\ox)-\tau_k\langle v,w^k\rangle}{\frac{1}{2}\tau_k^2}\to d^2\varphi(\ox,v)(w)\;\mbox{ as }\;k\to\infty.
\end{equation*}
Twice epi-differentiability has been recognized as an important concept of second-order variational analysis with numerous applications to optimization; see the aforementioned monograph by Rockafellar and Wets and the recent papers \cite{mms,mms1,ms} developing a systematic approach to verify epi-differentiability via {\em parabolic regularity}, which is a major second-order property of sets and functions.\vspace*{0.03in}

The next proposition expresses the properties of the FBE $\ph_\gg$ in \eqref{FBEop}, which are needed for the superlinear convergence of our algorithms, in terms of the given data of \eqref{QP}. {Recall that the sign
`$>$' before a matrix indicates the matrix positive-definiteness.}

\begin{Proposition}[\bf semismoothness$^*$ and directional differentiability of FBE derivatives]\label{semidirect}  Let $\varphi=f+g$ be as in \eqref{QP}, and let $\gamma >0$ be such that {$B:=I -\gamma A$ is positive-definite}. Then for any $\bar{x}\in\R^n$ satisfying the stationary condition $0 \in \partial\varphi(\bar{x})$ the following assertions hold:

{\bf(i)} $\nabla\varphi_\gamma$ is semismooth$^*$ at $\bar{x}$ if $\partial g$ is semismooth$^*$ at $(\bar{x},\ov)$, where
$\ov:=-A\bar{x}-b$.

{\bf(ii)} $\nabla\varphi_\gamma$ is directionally differentiable at $\bar{x}$ if $g$ is twice epi-differentiable at $\bar{x}$ for
$\bar{v}$.
\end{Proposition}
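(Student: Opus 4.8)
The plan is to reduce both assertions to the corresponding regularity properties of the proximal mapping $\text{\rm Prox}_{\gamma g}$ and then to transfer these from $\partial g$ through the resolvent identity. First I would combine the gradient formula \eqref{gradFBE} with $\nabla f(x)=Ax+b$ and $\nabla^2f(x)=A$ (so that $I-\gg\nabla^2f(x)=B$) to obtain
\begin{equation*}
\nabla\ph_\gg(x)=\gg^{-1}B\big(x-\text{\rm Prox}_{\gamma g}(Bx-\gg b)\big),\qquad x\in\R^n,
\end{equation*}
which exhibits $\nabla\ph_\gg$ as the sum of a linear map and the composition $-\gg^{-1}B\circ\text{\rm Prox}_{\gamma g}\circ(B\,\cdot\,-\gg b)$, all the outer and inner maps being affine isomorphisms since $B$ is positive-definite. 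The stationarity $0\in\partial\ph(\ox)=A\ox+b+\partial g(\ox)$ gives $\ov:=-A\ox-b\in\partial g(\ox)$, $\ou:=B\ox-\gg b=\ox+\gg\ov$, and $\text{\rm Prox}_{\gamma g}(\ou)=\ox$, so that $\nabla\ph_\gg(\ox)=0$ and everything is localized at the graph point $(\ou,\ox)\in\gph\text{\rm Prox}_{\gamma g}$.

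For assertion (i) I would record the resolvent identity $\text{\rm Prox}_{\gamma g}=(I+\gg\partial g)^{-1}$ in graphical form,
\begin{equation*}
(y,p)\in\gph\text{\rm Prox}_{\gamma g}\iff\big(p,\gg^{-1}(y-p)\big)\in\gph\partial g,
\end{equation*}
so that $\gph\text{\rm Prox}_{\gamma g}$ is the image of $\gph\partial g$ under the linear isomorphism $\Phi(p,w):=(p+\gg w,p)$; composing with the affine isomorphism coming from the displayed gradient formula makes $\gph\nabla\ph_\gg$ an affine-isomorphic image of $\gph\text{\rm Prox}_{\gamma g}$. The key observation is that Definition~\ref{semi*} says precisely that $\langle z^*,z\rangle=0$ for every direction $z$ and every $z^*\in N_{\gph F}(\cdot\,;z)$, i.e.\ that $\gph F$ is \emph{semismooth$^*$ as a set} at the reference point. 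Under an affine isomorphism $\Lambda(\zeta)=M\zeta+c$ with $M$ invertible, the directional limiting normal cone \eqref{dir-nc} transforms as $N_{\Lambda(\O)}(\Lambda\bar z;Md)=(M^*)^{-1}N_\O(\bar z;d)$, and the pairing is preserved since $\langle(M^*)^{-1}z^*,Md\rangle=\langle z^*,d\rangle$; hence set-semismoothness$^*$ is invariant under affine isomorphisms. Applying this twice along $\gph\partial g\to\gph\text{\rm Prox}_{\gamma g}\to\gph\nabla\ph_\gg$, the hypothesis that $\partial g$ is semismooth$^*$ at $(\ox,\ov)$ yields that $\gph\nabla\ph_\gg$ is set-semismooth$^*$ at $(\ox,0)$, which is exactly the semismoothness$^*$ of $\nabla\ph_\gg$ at $\ox$.

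For assertion (ii) I would run the same reduction with the invariance tool replaced. By Rockafellar's theorem relating second-order epi-differentiation and proto-differentiation \cite{Rockafellar98}, twice epi-differentiability of $g$ at $\ox$ for $\ov$ is equivalent to proto-differentiability of $\partial g$ at $(\ox,\ov)$, which by definition is geometric derivability of $\gph\partial g$ at $(\ox,\ov)$. Geometric derivability of a set is preserved under affine isomorphisms, so the transformation $\Phi$ shows that $\text{\rm Prox}_{\gamma g}$ is proto-differentiable at $(\ou,\ox)$. Since $\text{\rm Prox}_{\gamma g}$ is single-valued and (firmly non-expansive, hence) Lipschitz continuous for convex $g$, proto-differentiability at $\ou$ is equivalent to semidifferentiability, i.e.\ to classical directional differentiability. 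Finally, directional differentiability is stable under the affine composition displayed above, giving that $\nabla\ph_\gg$ is directionally differentiable at $\ox$ with $(\nabla\ph_\gg)'(\ox;d)=\gg^{-1}Bd-\gg^{-1}B\,\text{\rm Prox}_{\gamma g}'(\ou;Bd)$.

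The linear algebra identifying the three graphs is routine once $B$ is invertible, and the composition stability of directional differentiability is immediate. The main obstacle I expect is the clean justification of the two invariance principles — invariance of set-semismoothness$^*$ and of geometric derivability under affine isomorphisms — together with the correct transformation rule for the directional limiting normal cone \eqref{dir-nc}; once these are in place, and the equivalences ``twice epi-differentiable $\Leftrightarrow$ proto-differentiable $\Leftrightarrow$ resolvent semidifferentiable'' are invoked with the right references, both conclusions follow. An alternative to the set-based argument in (i) would be to transfer the pairing condition directly through the directional coderivative \eqref{dir-cod} by means of the coderivative/resolvent calculus of \cite{BorisKhanhPhat,Helmut}, which may fit the paper's existing toolkit more closely.
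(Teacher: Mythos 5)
Your argument is correct and follows the same skeleton as the paper's proof: both express $\nabla\ph_\gg$ as an affine map composed with $\text{\rm Prox}_{\gamma g}$ composed with an affine map, identify the relevant graph point via $\text{\rm Prox}_{\gamma g}(\ox-\gg(A\ox+b))=\ox$ at a stationary point, and then transfer semismoothness$^*$ (resp.\ proto-differentiability/directional differentiability) from $\partial g$ to the proximal mapping and on to $\nabla\ph_\gg$. The difference is in how the transfer steps are justified. The paper treats them as black boxes: it cites \cite[Proposition~6]{fgh} for ``$\partial g$ semismooth$^*$ $\Rightarrow$ $\text{\rm Prox}_{\gamma g}$ semismooth$^*$,'' its own Appendix Lemma~\ref{semismoothcomposite} plus \cite[Proposition~3.6]{Helmut} for stability under the affine pre-/post-composition in \eqref{FBEh}, and \cite[Corollary~8]{fgh} for the directional-differentiability transfer in (ii). You instead unify all of these into a single mechanism: the resolvent identity exhibits $\gph\text{\rm Prox}_{\gamma g}$ as a linear-isomorphic image of $\gph\partial g$, and $\gph\nabla\ph_\gg$ as an affine-isomorphic image of that; semismoothness$^*$ in the sense of Definition~\ref{semi*} is exactly the condition $\la z^*,z\ra=0$ on directional normals \eqref{dir-nc} to the graph, which is invariant under affine isomorphisms (as is geometric derivability, hence proto-differentiability). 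Your identities — the transformation rule $N_{\Lambda(\O)}(\Lambda\bar z;Md)=(M^*)^{-1}N_\O(\bar z;d)$, preservation of the pairing, and the chain ``twice epi-differentiable $\Leftrightarrow$ $\partial g$ proto-differentiable $\Leftrightarrow$ $\text{\rm Prox}_{\gamma g}$ semidifferentiable $\Leftrightarrow$ directionally differentiable (by Lipschitz continuity)'' — are all correct, and the base point indeed maps to $(\ox,0)$. What your route buys is a self-contained proof that makes transparent why the result is really just affine invariance of graph properties; what the paper's route buys is brevity and reuse of results (notably from \cite{fgh}) that cover the nonpolyhedral resolvent calculus once and for all. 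To make your sketch a complete proof you would only need to write out the invariance computation for \eqref{dir-nc} explicitly and supply the precise reference for the proto-differentiability/semidifferentiability equivalence for single-valued Lipschitz maps in \cite{Rockafellar98}.
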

\begin{proof}
Denote $h_\gamma(x):=\text{\rm Prox}_{\gamma g}(x-\gamma(Ax+b))$ for all $x\in \R^n$ and get by Proposition~\ref{diffFBE} that
\begin{equation}\label{FBEh}
 {\nabla\varphi_\gamma(x)=\gamma^{-1}(I-\gamma A)\big(x-h_\gamma(x)\big)=\gamma^{-1} Bx-\gamma Bh_\gamma^{-1}(x),\quad x \in \R^n.}
\end{equation}
Since the stationary point $\ox$ is an optimal solution to \eqref{QP} due the convexity of $\ph$, we have that $\bar{x}= h_\gamma(\bar{x})$ by \cite[Theorem~27.2]{Bauschke}. Since $\partial g$ is semismooth$^*$ at $(\bar{x},\ov)$, we have that $\text{\rm Prox}_{\gamma g}$ is semismooth$^*$ at $\bar{x}-\gamma(A\bar{x}+b)$ by using \cite[Proposition~6]{fgh}. It follows from Lemma~\ref{semismoothcomposite} in the Appendix that $h_\gamma$ is semismooth$^*$ at $\bar{x}$. Employing now \eqref{FBEh} and \cite[Proposition~3.6]{Helmut} tells us that  $\nabla\varphi_\gamma$ is semismooth$^*$ at $\bar{x}$. This verifies assertion (i).

To proceed with the proof of (ii), observe by \cite[Theorem~13.40]{Rockafellar98} that the twice epi-differentiability of $g$ at $\bar{x}$ for $\bar{v}$ amounts to saying that the subgradient mapping $\partial g$ is proto-differentiable  at $(\bar{x},\bar{v})$.
Using \cite[Corollary~8]{fgh}, we conclude that $\text{\rm Prox}_{\gamma g}$ is directionally differentiable at $\bar{x}-\gamma(A\bar{x}+b)$, which yields in turn the directional differentiability of $h_\gamma$ at $\bar{x}$. Thus the mapping $\nabla\varphi_\gamma$ is directionally differentiable at $\bar{x}$ due to \eqref{FBEh}. This verifies (ii) and completes the proof of the proposition.
\end{proof}

Now we are ready to describe and then justify the proposed globally coderivative-based damped Newton method for solving the convex composite optimization problem \eqref{QP}.

\begin{algorithm}[H]
\caption{Coderivative-based damped Newton algorithm for convex composite optimization}\label{LSQP}
\begin{algorithmic}[1]
\Require {$x^0 \in \R^n$, $\gamma>0$ such that $B:=I-\gamma A > 0$, $\sigma\in\left(0,\frac{1}{2}\right)$, $\beta\in(0,1)$, and  $\varphi_\gamma$ as in \eqref{FBE}} \For{\texttt{$k=0,1,\ldots$}}
\State\text{If $\nabla\varphi_\gamma(x^k)=0$, stop. Otherwise set $
u^k:= x^k - \gamma(Ax^k+b), \; v^k:=\text{\rm Prox}_{\gamma g}(u^k)$}
\State \label{prox-dir} \text{Find $d^k\in\R^n$ st
$-\frac{1}{\gamma}(x^k-v^k)-Ad^k \in \partial^2 g\left(v^k, \frac{1}{\gamma}(u^k-v^k) \right)(x^k-v^k + d^k)$}
\State  Set $\tau_k = 1$
\While{$\varphi_\gamma(x^k+\tau_kd^k)>\varphi_\gamma(x^k)+\sigma\tau_k\langle\nabla\varphi_\gamma(x^k),d^k\rangle$}
\State \text{set $\tau_k:= \beta \tau_k$}
\EndWhile
\State \text{Set $x^{k+1}:=x^k+\tau_k d^k$}
\EndFor
\end{algorithmic}
\end{algorithm}
Explicit expressions for the sequences $\{v^k\}$ and $\{d^k\}$ in Algorithm \ref{LSQP} depend on given structures of the regularizers $g$, which are efficiently specified in applied models of machine learning and statistics; see, e.g., Section~\ref{Lassosec}.
The next theorem provides explicit sufficient conditions to run Algorithm~\ref{LSQP} for solving the class of convex composite optimization problems \eqref{QP}.

\begin{Theorem}[\bf global convergence of coderivative-based damped Newton algorithm in convex composite optimization]\label{solvingQP} Consider problem \eqref{QP}, where the matrix $A$ is positive-definite. Then Algorithm~{\rm\ref{LSQP}} either stops after finitely many iterations, or produces a sequence $\{x^k\}$ such that it globally R-linearly converges to $\bar{x}$, which is the unique solution to \eqref{QP} being a tilt-stable local minimizer of $\varphi$ with modulus $\kappa:=1/\lambda_{\text{\rm min}(A)}$. Furthermore, the convergence rate of $\{x^k\}$ is at least Q-superlinear if the subgradient mapping $\partial g$ is semismooth$^*$ at $(\bar{x},\bar{v})$, where $\bar{v}:=-A\bar{x}-b$, and if either one of two following conditions is satisfied:

{\bf(i)} $\sigma\in\left(0,1/(2L K)\right)$, where
$L:= 2\left(1-\gamma\lambda_{\text{\rm min}(A)}\right)/\gamma$ and $K:=\kappa+\gamma\|B^{-1}\|$.

{\bf(ii)} $g$ is twice epi-differentiable at $\bar{x}$ for $\bar{v}$.
\end{Theorem}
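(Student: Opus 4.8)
The plan is to reduce the whole statement to the $\mathcal{C}^{1,1}$ theory of Section~\ref{sec:dampedC11} applied to the forward-backward envelope $\varphi_\gamma$, by showing that Algorithm~\ref{LSQP} is exactly Algorithm~\ref{LS} run on $\varphi_\gamma$. First I would record the structural properties of $\varphi_\gamma$. Since $A$ is positive-definite and $\gamma$ is chosen so that $B=I-\gamma A>0$ (equivalently $\gamma\in(0,1/\lambda_{\text{\rm max}(A)})$), Proposition~\ref{diffFBE}(ii) tells us that $\varphi_\gamma$ is $\mathcal{C}^{1,1}$ with $\nabla\varphi_\gamma$ globally Lipschitzian of modulus $L$ and that $\varphi_\gamma$ is strongly convex. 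Strong convexity forces $\partial^2\varphi_\gamma(x)$ to be positive-definite at every $x\in\R^n$ and makes the level set $\Omega:=\{x\in\R^n\mid\varphi_\gamma(x)\le\varphi_\gamma(x^0)\}$ bounded, so all hypotheses of Theorem~\ref{globalconver} are met and $\{x^k\}$ necessarily has an accumulation point.

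The decisive step is the identification of the search direction. For the quadratic $f$ in \eqref{QP}, formula \eqref{gradFBE} gives $\nabla\varphi_\gamma(x^k)=\gamma^{-1}B(x^k-v^k)$, whence $\gamma B^{-1}\nabla\varphi_\gamma(x^k)=x^k-v^k$. Plugging $\bar z:=-\nabla\varphi_\gamma(x^k)$ and $w:=d^k$ into the equivalence of Proposition~\ref{calculatepsi} converts its right-hand side precisely into the inclusion of Step~\ref{prox-dir}: the term $B^{-1}\bar z-Aw$ becomes $-\gamma^{-1}(x^k-v^k)-Ad^k$, the coderivative argument $w-\gamma B^{-1}\bar z$ becomes $x^k-v^k+d^k$, and the base point and subgradient inside $\partial^2 g$ reduce to $v^k=\text{\rm Prox}_{\gamma g}(u^k)$ and $\gamma^{-1}(u^k-v^k)$. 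Hence Step~\ref{prox-dir} is nothing but the requirement $-\nabla\varphi_\gamma(x^k)\in\partial^2\varphi_\gamma(x^k)(d^k)$; since the backtracking tests in both algorithms are written in terms of $\varphi_\gamma$ and $\nabla\varphi_\gamma$, Algorithm~\ref{LSQP} coincides with Algorithm~\ref{LS} applied to $\varphi_\gamma$, and in particular Proposition~\ref{descent} guarantees its well-posedness.

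With this identification, Theorem~\ref{globalconver} yields the convergence of $\{x^k\}$ to its accumulation point $\bar{x}$, a tilt-stable local minimizer of $\varphi_\gamma$, at least R-linearly. To transfer the conclusion to \eqref{QP}, I would use Proposition~\ref{diffFBE}(i) to identify $\bar{x}$ as a stationary point and hence, by the strong convexity of $\varphi$, as its unique minimizer, and Proposition~\ref{metrictilt}(iii) to deduce that $\bar{x}$ is a tilt-stable minimizer of $\varphi$; its modulus is $\kappa=1/\lambda_{\text{\rm min}(A)}$ because positive-definiteness of $A$ makes $\varphi$ strongly convex with modulus $\lambda_{\text{\rm min}(A)}$. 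For the superlinear rates I would invoke Theorem~\ref{superlinearLS} for $\varphi_\gamma$: semismoothness$^*$ of $\nabla\varphi_\gamma$ at $\bar{x}$ follows from that of $\partial g$ at $(\bar{x},\bar{v})$ via Proposition~\ref{semidirect}(i). Under condition (ii), twice epi-differentiability of $g$ gives directional differentiability of $\nabla\varphi_\gamma$ by Proposition~\ref{semidirect}(ii), matching case~(i) of Theorem~\ref{superlinearLS}. Under condition (i), I would match constants: the Lipschitz modulus in Theorem~\ref{superlinearLS} is $\ell=L$, and Proposition~\ref{metrictilt}(i) together with the bound $\|\partial^2\varphi(\bar{x},0)^{-1}\|\le\kappa$ (coming from strong convexity of $\varphi$) shows that $K=\kappa+\gamma\|B^{-1}\|$ serves as a tilt-stability modulus of $\varphi_\gamma$; thus $\sigma\in(0,1/(2LK))$ implies the hypothesis $\sigma\in(0,1/(2\ell\kappa'))$ of case~(ii) for the true modulus $\kappa'\le K$.

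The main obstacle I anticipate is twofold: the careful bookkeeping in the direction-identification step, where the equivalence of Proposition~\ref{calculatepsi} must be matched term by term with Step~\ref{prox-dir} using $\nabla\varphi_\gamma=\gamma^{-1}B(\cdot-v)$; and the constant-matching in part~(i), where one must confirm that the abstract Lipschitz and tilt-stability moduli demanded by Theorem~\ref{superlinearLS}(ii) are dominated by the explicit data-dependent quantities $L$ and $K$, which rests on the inverse-norm estimate of Proposition~\ref{metrictilt}(i). Everything else is a direct transcription of the already-established $\mathcal{C}^{1,1}$ theorems.
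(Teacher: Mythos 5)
Your proposal is correct and follows essentially the same route as the paper: reduce Algorithm~\ref{LSQP} to Algorithm~\ref{LS} applied to the forward-backward envelope $\varphi_\gamma$ via Propositions~\ref{diffFBE}(ii) and \ref{calculatepsi}, invoke Theorems~\ref{globalconver} and \ref{superlinearLS}, and transfer tilt stability and the moduli back to $\varphi$ through strong convexity, Proposition~\ref{metrictilt}, and the exact regularity bound \eqref{Morcri2}. The only cosmetic differences are that you spell out the direction-identification bookkeeping more explicitly than the paper does and route the tilt-stability transfer through Proposition~\ref{metrictilt}(iii) where the paper cites Lemma~\ref{strongtilt} directly; neither affects correctness.
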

\begin{proof} We deduce from Propositions~\ref{diffFBE}(ii) and \ref{calculatepsi} that solving
the convex composite optimization problem \eqref{QP} by Algorithm~\ref{LSQP} reduces to solving the ${\cal C}^{1,1}$ optimization problem \eqref{FBEop} by using Algorithm~\ref{LS}. The rest of the proof is split into the following two claims.\\[1ex]
{\bf Claim~1:} {\em Algorithm~{\rm\ref{LSQP}} either stops after finitely many iterations, or produces a sequence sequence $\{x^k\}$ that globally R-linearly converges to the unique solution $\bar{x}$ of \eqref{QP}, which is a tilt-stable local minimizer of $\varphi$.} Indeed, we get from Proposition~\ref{diffFBE}(ii) that $\varphi_\gamma$ is a strongly convex function, and its gradient is globally Lipschitz continuous with modulus $L$. It follows from\cite[Theorem~5.1]{ChieuChuongYaoYen} that $\partial^2\varphi_\gamma(x)$ is positive-definite for all $x \in \R^n$. Then Theorem~\ref{globalconver} tells us that Algorithm~{\rm\ref{LSQP}} either stops after finitely many iterations, or produces a sequence $\{x^k\}$ that globally $R$-linearly converges to $\bar{x}$, which is a stationary point of $\varphi_\gamma$. Employing again Proposition~\ref{diffFBE}(ii) confirms that $\bar{x}$ is an optimal solution to \eqref{QP}. Furthermore, the strong convexity of $\ph$ with modulus $\lambda_{\text{\rm min}(A)}$ and Lemma~\ref{strongtilt} from the Appendix yield the uniqueness and tilt stability conclusions for $\ox$. \\[1ex]
{\bf Claim~2:} {\em The Q-superlinear convergence of $x^k\to\ox$ holds under the assumptions of the theorem.} The imposed semismooth$^*$ property of $\partial g$ at $(\ox,\ov)$ ensures the fulfillment of this property for $\nabla\ph_\gg$ at $\ox$ by Lemma~\ref{semidirect}. Assume now that condition (i) of the theorem is satisfied. Then we get from Claim~1 that $L$ is a Lipschitz constant of $\nabla\varphi_\gamma$ around $\ox$. As follows from \cite[Proposition~4.5]{dmn}, the modulus of tilt-stability of the l.s.c.\ convex function $\varphi$  under consideration at $\ox$ is the same as the modulus of metric regularity of $\varphi$ around this point. Combining the latter with the statement of Proposition~\ref{metrictilt}(i) and the precise calculation in \eqref{Morcri2} of the exact bound of metric regularity, we conclude that $\ox$ is tilt-stable local minimizer of $\varphi_\gamma$ with modulus $K$. Thus the claimed assertion on the superlinear convergence in this case follows directly from Theorem~\ref{superlinearLS}. Assuming finally by (ii) that $g$ is twice epi-differentiable at $\ox$ for $\ov$, we deduce from Proposition~\ref{semidirect}(ii) that the mapping $\nabla\varphi_\gamma$ is directionally differentiable at $\ox$. Thus the claimed superlinear convergence of $\{x^k\}$ follows in this case from the corresponding statement of Theorem~\ref{superlinearLS}. This completes the proof of the theorem.
\end{proof}

Theorem~\ref{solvingQP} and the results of numerical experiments in Section~\ref{Lassosec} show that Algorithm~\ref{LSQP}, designed in terms of the computable data of \eqref{QP}, exhibits an excellent performance when $A$ is positive-definite, i.e., in the strongly convex setting of \eqref{QP}. Otherwise, this algorithm is not even well-defined. To relax this positive-definiteness/strong convex assumption, we now propose and justify a new coderivative-based algorithm of the  {regularized Newton} type, which is well-defined and globally convergent to solutions of \eqref{QP} for merely {\em positive-semidefinite} matrices $A$ with linear and superlinear convergence rates under some additional assumptions that include the {\em metric regularity} of the subgradient mapping $\partial\ph$. Observe that the latter assumption is {\em weaker} than  {the {\em strong convexity assumption} on $f$} in problems of composite optimization \eqref{QP0}, including those with quadratic functions $f$ as in \eqref{QP}. A simple class of functions $\ph$ illustrating this observation is given by $\ph=f+|x|$. In particular, for $f\equiv 0$ we clearly have that
\begin{equation*}
 {\partial^2\ph(0,0)(v)=\big\{w\in\R\;\big|\;(w,-v)\in\R\times\{0\}\big\},}
\end{equation*}
and thus $0\in\partial^2\ph(0,0)(v)\Longrightarrow v=0$, which tells us by \eqref{Morcri} that $\partial\ph$ is metrically regular around $(0,0)$.\vspace*{0.03in}

Here is the aforementioned algorithm, which is more complicated than Algorithm~\ref{LSQP}, while being applied for problems \eqref{QP} with positive-semidefinite matrices $A$. Note that the new algorithm {\em does not} require performing operations like computing inverse matrices that are expensive in large dimensions.

\begin{algorithm}[H]
\caption{{Coderivative-based regularized Newton algorithm for convex composite optimization}}\label{LSQP2}
\begin{algorithmic}[1]
\Require {$x^0 \in \R^n$, $\gamma>0$ such that $B:=I-\gamma A >0$, $\lambda>0$,  $\sigma\in\left(0,\frac{1}{2}\right)$, $\beta\in(0,1)$, and   $\varphi_\gamma$ as in \eqref{FBE}}
\For{\texttt{$k=0,1,\ldots$}}
\State\text{If $\nabla\varphi_\gamma(x^k)=0$, stop. Otherwise set $u^k:= x^k - \gamma(Ax^k+b),v^k:=\text{\rm Prox}_{\gamma g}(u^k),\mu_k:= \lambda\|\nabla\varphi_\gamma(x^k)\|$}
\State \label{prox-dir2} \text{Find $z^k\in\R^n$ st
$-\frac{1}{\gamma}(x^k-v^k)-(\mu_k I+AB)z^k\in\partial^2 g\left(v^k,\frac{1}{\gamma}(u^k-v^k) \right)\big(x^k-v^k +(B+\gg\mu_kI)z^k\big)$}
\State Set $d^k=B z^k$
\State  Set $\tau_k = 1$
\While{$\varphi_\gamma(x^k+\tau_kd^k)>\varphi_\gamma(x^k)+\sigma\tau_k\langle\nabla\varphi_\gamma(x^k),d^k\rangle$}
\State \text{set $\tau_k:= \beta \tau_k$}
\EndWhile
\State\text{Set $x^{k+1}:=x^k+\tau_k d^k$}
\EndFor
\end{algorithmic}
\end{algorithm}

The next theorem fully describes the well-posedness and performance of Algorithm~\ref{LSQP2}.

\begin{Theorem}[\bf {global convergence of coderivative-based  regularized Newton algorithm in convex composite optimization}]\label{solvingQP2} Consider problem \eqref{QP} of convex composite optimization, where the matrix $A$ is positive-semidefinite. Then we have the assertions:

{\bf(i)} Algorithm~{\rm\ref{LSQP2}} either stops after finitely many iterations, or produces a sequence $\{x^k\}$ for which all the {accumulation points} of this sequence are optimal solutions to \eqref{QP}.

{\bf(ii)} If in addition the subgradient mapping $\partial\varphi$ is metrically regular around $(\bar{x},0)$ with modulus $\kappa>0$, where $\bar{x}$ is {an accumulation point} of $\{x^k\}$, then the sequence $\{x^k\}$ globally R-linearly converges to $\bar{x}$, and $\bar{x}$ is a tilt-stable local minimizer of $\varphi$ with modulus $\kappa$.

{\bf(iii)} The rate of convergence of $\{x^k\}$ is at least Q-superlinear if the subgradient mapping $\partial g$ is semismooth$^*$ at $(\bar{x},\bar{v})$, where $\bar{v}:=-A\bar{x}-b$, and if one of two following conditions holds:

{\bf(a)} $\sigma\in\left(0,1/(2LK)\right)$, where $L:= 2\left(1-\gamma\lambda_{\text{\rm min}(A)}\right) /\gamma$ and $K:= \kappa+\gamma\|B^{-1}\|$.

{\bf(b)} $g$ is twice epi-differentiable at $\bar{x}$ for $\bar{v}$.
\end{Theorem}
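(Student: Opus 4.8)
The plan is to show that running Algorithm~\ref{LSQP2} on the composite problem \eqref{QP} is \emph{identical} to running the regularized Newton Algorithm~\ref{LS2} on the $\mathcal{C}^{1,1}$ problem \eqref{FBEop} governed by the FBE $\varphi_\gamma$ with regularization constant $c=\lambda$, after which every conclusion follows from Section~\ref{sec:modifiedC11} and the structural Propositions~\ref{diffFBE}--\ref{semidirect}. The crux is to verify that the direction $d^k=Bz^k$ built in Algorithm~\ref{LSQP2} is precisely the regularized Newton direction for $\varphi_\gamma$, namely
$$
-\nabla\varphi_\gamma(x^k)\in\partial^2\varphi_\gamma(x^k)(d^k)+\mu_k d^k,\qquad \mu_k=\lambda\|\nabla\varphi_\gamma(x^k)\|.
$$
I would set $\bar z:=-\nabla\varphi_\gamma(x^k)-\mu_k d^k$ and $w:=d^k$ in the formula of Proposition~\ref{calculatepsi}, and use $\nabla\varphi_\gamma(x^k)=\gamma^{-1}B(x^k-v^k)$ from \eqref{gradFBE} together with $u^k=x^k-\gamma(Ax^k+b)$ and $v^k=\text{\rm Prox}_{\gamma g}(u^k)$. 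Since $B^{-1}\nabla\varphi_\gamma(x^k)=\gamma^{-1}(x^k-v^k)$ and $B^{-1}d^k=z^k$, a direct computation gives $B^{-1}\bar z-Aw=-\gamma^{-1}(x^k-v^k)-(\mu_k I+AB)z^k$ and $w-\gamma B^{-1}\bar z=(x^k-v^k)+(B+\gamma\mu_k I)z^k$, so Proposition~\ref{calculatepsi} turns the displayed Newton inclusion into \emph{exactly} the defining inclusion for $z^k$ in Algorithm~\ref{LSQP2}. As the backtracking loop coincides with that of Algorithm~\ref{LS2}, the reduction is complete.

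With the reduction in hand, assertion~(i) follows because $A$ positive-semidefinite makes $\varphi_\gamma$ convex by Proposition~\ref{diffFBE}(ii); hence $\nabla\varphi_\gamma$ is monotone and $\partial^2\varphi_\gamma(x)$ is positive-semidefinite for every $x\in\R^n$, so Theorem~\ref{limitingpoint}(ii) applies and the accumulation points of $\{x^k\}$ are stationary points of $\varphi_\gamma$, which by Proposition~\ref{diffFBE}(i) are exactly the optimal solutions of \eqref{QP}. For assertion~(ii), the metric regularity of $\partial\varphi$ around $(\ox,0)$ transfers to $\nabla\varphi_\gamma$ around $\ox$ via Proposition~\ref{metrictilt}(ii), so Theorem~\ref{linearcon2}(i),(ii) yields the global R-linear convergence of $\{x^k\}$ to $\ox$; tilt-stability of $\ox$ for $\varphi$ with modulus $\kappa$ then follows from Proposition~\ref{metrictilt}(iii) combined with \cite[Proposition~4.5]{dmn}, which identifies the tilt-stability modulus of a convex function with the metric regularity modulus of its subgradient mapping.

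For assertion~(iii) I would pass the semismooth$^*$ property of $\partial g$ at $(\ox,\ov)$ to $\nabla\varphi_\gamma$ at $\ox$ using Proposition~\ref{semidirect}(i), and then invoke Theorem~\ref{linearcon2}(iii). Under condition~(b), twice epi-differentiability of $g$ gives directional differentiability of $\nabla\varphi_\gamma$ by Proposition~\ref{semidirect}(ii), matching case~(a) of Theorem~\ref{linearcon2}(iii). Under condition~(a), I would match the constants: $L$ is a global Lipschitz modulus of $\nabla\varphi_\gamma$ by Proposition~\ref{diffFBE}(ii), while Proposition~\ref{metrictilt}(i), the exact-bound formula \eqref{Morcri2}, and $\|\partial^2\varphi(\ox,0)^{-1}\|=\kappa$ identify $K=\kappa+\gamma\|B^{-1}\|$ as a metric regularity modulus of $\nabla\varphi_\gamma$ around $\ox$; thus $\sigma\in(0,1/(2LK))$ is precisely case~(b) of Theorem~\ref{linearcon2}(iii). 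In both cases the Q-superlinear rates of $\{x^k\}$ transfer back to \eqref{QP}.

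The step I expect to be the main obstacle is the direction-equivalence computation of the first paragraph: making Proposition~\ref{calculatepsi} reproduce \emph{verbatim} the inclusion in Algorithm~\ref{LSQP2}, with the correct placement of the factor $B$ in $d^k=Bz^k$ and of the regularizing term $\gamma\mu_k I$, is delicate and is in fact the design principle behind the algorithm. Once this identification is secured, the three assertions are essentially bookkeeping on top of the already-established results for the FBE.
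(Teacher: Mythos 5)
Your proposal is correct and follows essentially the same route as the paper: reduce Algorithm~\ref{LSQP2} on \eqref{QP} to Algorithm~\ref{LS2} on the FBE problem \eqref{FBEop} via Propositions~\ref{diffFBE} and \ref{calculatepsi}, then invoke Theorems~\ref{limitingpoint} and \ref{linearcon2} together with Propositions~\ref{metrictilt} and \ref{semidirect} for the transfer of metric regularity, tilt stability, semismoothness$^*$, and the constants $L$ and $K$. The only difference is that you carry out the direction-equivalence computation ($B^{-1}\bar z-Aw$ and $w-\gamma B^{-1}\bar z$) explicitly, which the paper leaves implicit in the phrase ``we can reduce Algorithm~\ref{LSQP2} \dots to Algorithm~\ref{LS2}''; your computation checks out and only makes the argument more complete.
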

\begin{proof}
Using Propositions~\ref{diffFBE}(ii) and \ref{calculatepsi}, we can reduce Algorithm~\ref{LSQP2} for solving the convex composite optimization problem \eqref{QP} to Algorithm~\ref{LS2} for solving the ${\cal C}^{1,1}$ optimization problem \eqref{FBEop}. Let us now proceed with the justification of this procedure by verifying each claim of the theorem.\\[1ex]
{\bf Claim~1:} {\em Assertion {\rm(i)} holds.} Indeed, we have from Proposition~\ref{diffFBE}(ii) that the gradient mapping $\nabla\varphi_\gamma$ is globally Lipschitz continuous with modulus $L$ and $\varphi_\gamma$ is a convex function; thus the generalized Hessian
$\partial^2\varphi_\gamma(x)$ is positive-semidefinite for all $x\in\R^n$ by \cite[Theorem~3.2]{ChieuChuongYaoYen}. Therefore, it follows from Theorem~\ref{limitingpoint} that Algorithm~{\rm\ref{LSQP2}} either stops after finitely many iterations, or produces a sequence $\{x^k\}$ whose
{accumulation points} are solutions to \eqref{FBEop}. This verifies assertion (i).\\[1ex]
{\bf Claim~2:} {\em Assertion {\rm(ii)} holds.} Under the assumptions made in (ii), it follows from the established relationships between problems \eqref{QP} and \eqref{FBEop} and the application of Theorem~\ref{linearcon2} to the latter that the sequence $\{x^k\}$ globally $R$-linearly converges to $\bar{x}$ as $k\to\infty$. Then we get by \cite[Proposition~4.5]{dmn} that the tilt-stability of $\varphi$ at $\bar{x}$ with modulus $\kappa$ follows from the metric regularity of $\partial\varphi$ and the convexity of $\varphi$, which therefore verifies (ii).
\\[1ex]
{\bf Claim~3:} {\em Assertion {\rm(iii)} holds.} We deduce from Proposition~\ref{semidirect}(i) that the  semismoothness$^*$ {of $\partial g$ at $(\ox,\ov)$} yields this property for $\nabla\varphi_\gamma$ at $\bar{x}$. Assuming first that condition (a) is satisfied, we deduce from Proposition~\ref{diffFBE}(ii) that $L$ is a Lipschitz constant of $\nabla\varphi_\gamma$ around $\ox$. It follows from \cite[Proposition~4.5]{dmn} that the modulus of tilt-stability of the l.s.c.\ convex function $\varphi$ at $\ox$ is equal to the modulus of metric regularity of $\varphi$ around this point. Combining the latter with Proposition~\ref{metrictilt}(i) and the calculation formula for the exact regularity bound in \eqref{Morcri2} tells us that $\ox$ is a tilt-stable local minimizer of $\varphi_\gamma$ with modulus $K$. Hence assertion (iii) in case (a) follows from Theorem~\ref{linearcon2}(a). Assuming now (b) implies by Proposition~\ref{semidirect}(ii) that $\nabla\varphi_\gamma$ is directionally differentiable at $\ox$. Applying finally Theorem~\ref{linearcon2}(b) to problem \eqref{FBEop} ensures  {the $Q$-superlinear  convergence} of sequence $x^k\to\bar{x}$ as $k \to\infty$ and therefore completes the proof of the theorem.
\end{proof}\vspace*{-0.13in}

\begin{Remark}[\bf second-order subdifferential computations for regularizers]{\rm One of the crucial steps in implementing Algorithms~\ref{LSQP} and \ref{LSQP2} is deriving explicit second-order subdifferential computations for the regularizers $g:\R^n\to\overline{\R}$ in the quadratic composite optimization problem \eqref{QP}. This has been accomplished in many publications, some of which we mention below for the reader's convenience and further applications. When $g$ is the indicator function of general {\em polyhedral sets}, explicit formulas of different types for $\partial^2g$ are derived in \cite{dr,hmn,hr,yy}. In the case of {\em moving polyhedra}, such computations are provided in \cite{chhm,nam10,qui14} with applications to optimal control of sweeping processes in \cite{chhm} among other publications. When $g$ is described  by {\em nonlinear inequality systems}, $\partial^2g$ is efficiently evaluated in \cite{hos}, and for various types of {\em maximum functions} the computations $\partial^2g$ are achieved in \cite{EH14,ms16}. Furthermore, in \cite{ms16}, the reader can find explicit computation formulas for $\partial^2g$ addressing the general class of extended-real-valued {\em convex piecewise linear} functions, while papers \cite{mr,mrs} contain second-order subdifferential computations for some different subclasses of piecewise linear-quadratic functions. Complete computations of $\partial^2g$ for the indicator function of the {\em Lorentz cone} in second-order cone programming are provided in \cite{mos,os}. Finally in this list, we mention the most involved second-order subdifferential computations accomplished in \cite{dsy} for general classes of the general class of nonpolyhedral systems including {\em semidefinite cone complementarity constraints}.} 
\end{Remark}\vspace*{-0.25in}

\section{Applications and Numerical Experiments}\label{Lassosec}\vspace*{-0.05in}
 
This section provides numerical experiments for our methods and their comparison with the classical semismooth Newton methods in $\mathcal{C}^{1,1}$ optimization. Furthermore, we also provide applications of the developed generalized Newton algorithms to problems of nonsmooth convex composite optimization and their comparison with some well-recognized methods of nonsmooth optimization in some classes of practical models.\vspace*{-0.1in} 

\subsection{Testing $\mathcal{C}^{1,1}$ Optimization Problems}\label{testC11experiment}

The first subsection here is devoted to comparing our approach with the well-recognized semismooth Newton method (SNM) to solve the {\em testing optimization problem}:
\begin{eqnarray}\label{C11testproblem}
\text{minimize }\;\varphi(x):= e_f(x) + \frac{1}{2}\|x\|^2 \quad\text{ subject to }\;x\in\R^n,
\end{eqnarray}
where $e_f(x)$ is the  Moreau envelope with the parameter $\lambda = 1$  of the maximum function given by
\begin{equation}\label{maxf}
f(x):= \max\big\{x_1,\ldots,x_n\big\}\quad \text{for all }\;x=(x_1,\ldots,x_n)\in\R^n. 
\end{equation}
Due to \cite[Proposition 12.30]{Bauschke}, $e_f (x)$ is continuously differentiable with the Lipschitzian gradient, and \eqref{C11testproblem} belongs to the class of $\mathcal{C}^{1,1}$ optimization problems \eqref{QP}.

\medskip 
The following lemma is needed for implementing our generalized damped Newton algorithm (GDNM) to solve the formulated optimization problem \eqref{C11testproblem}.

\begin{Lemma}[\bf proximal mapping and second-order subdifferentials of maximum functions]\label{prcalofmax} Let $f:\R^n\to\R$ be taken from \eqref{maxf}. Then the proximal mapping of $f$ is calculated by
\begin{equation}\label{proxmax}
\text{\rm Prox}_f(x)=\big(\text{\rm min}\{x_1,s\},\ldots,\text{\rm min} \{x_n,s\}\big),
\end{equation}
where the number $s \in \R$ is such that 
\begin{equation}\label{s-equa}
\sum_{i=1}^n\max\big\{0, x_n-s\big\} =1. 
\end{equation}
For each $x \in \R^n$ and $y \in \partial f(x)$, define the index sets 
$$
J(x):=\big\{i \in \{1,\ldots,n\}\;\big|\; x_i = f(x)\big\}\;\mbox{ and }\;L(x):=\big\{i \in J(x)\;\big|\; y_i >0\big\}.
$$
The calculation formula for the generalized Hessian of $f$ is
\begin{equation}\label{2ndsubofmax}
w \in \partial^2 f (x,y)(v) \iff v_i= c \;\; \forall i \in L(x),\; \sum_{i=1}^n w_i = 0, \; w_i \geq 0\;\; \forall i \in J_{>}(x), \; w_i =0 \;\; \forall i \in J^c(x)\cup J_{<}(x),
\end{equation}
where $c\in \R$ is an arbitrary constant, and where 
$$
J_{>}(x):=\big\{i \in J(x)\;\big|\; v_i > c\big\}, \quad J_{<}(x):=\big\{i \in J(x)\;\big|\; v_i <c\big\}. 
$$
\end{Lemma}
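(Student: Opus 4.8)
The plan is to prove the two formulas separately: the proximal mapping \eqref{proxmax}--\eqref{s-equa} reduces to a one-dimensional convex minimization, whereas the generalized Hessian \eqref{2ndsubofmax} is a genuine second-order (coderivative) computation for a piecewise linear function. For the proximal mapping I would rewrite the defining problem \eqref{Prox} (with $\gamma=1$) by introducing an epigraphical variable $t$ for the maximum: since $f(y)=\mathrm{max}_i y_i$ equals the least $t$ with $y_i\le t$ for all $i$, and the objective is increasing in $t$, minimizing $f(y)+\frac12\|y-x\|^2$ is equivalent to minimizing $t+\frac12\|y-x\|^2$ over $\{(y,t):y_i\le t\}$. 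For fixed $t$ the inner problem separates and is solved by the projections $y_i=\mathrm{min}\{x_i,t\}$, reducing everything to the scalar convex problem of minimizing $\psi(t):=t+\frac12\sum_{i=1}^n(\mathrm{max}\{0,x_i-t\})^2$. Its derivative $\psi'(t)=1-\sum_{i=1}^n\mathrm{max}\{0,x_i-t\}$ is continuous and nondecreasing with $\psi'(t)\to-\infty$ as $t\to-\infty$ and $\psi'(t)\to 1$ as $t\to+\infty$, so it has a root $s$; any root satisfies $\mathrm{max}_i x_i>s$, which forces local strict monotonicity and hence uniqueness of $s$. This root is exactly \eqref{s-equa}, and substituting $t=s$ into $y_i=\mathrm{min}\{x_i,s\}$ gives \eqref{proxmax}.

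For the generalized Hessian, the key observation is that $f$ in \eqref{maxf} is the support function of the unit simplex $\Delta:=\{y:\sum_i y_i=1,\ y_i\ge 0\}$, so that $\partial f(x)=\conv\{e_i:i\in J(x)\}=\{y\in\Delta:y_i=0\ \text{for}\ i\notin J(x)\}$, and $(x,y)\in\gph\partial f$ if and only if $(y,x)\in\gph N_\Delta$. In particular $\gph\partial f$ is a finite union of polyhedral convex sets. I would then compute $\partial^2 f(\bar x,\bar y)(v)=(D^*\partial f)(\bar x,\bar y)(v)$ directly from \eqref{2nd} and \eqref{lim-cod} by evaluating the limiting normal cone $N_{\gph\partial f}(\bar x,\bar y)$. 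Locally the defining relation is the complementarity between the ``equal-to-the-max'' constraints on the active coordinates $J(\bar x)$ and the sign constraints $y_i\ge 0$, with the ambiguity living precisely on $J(\bar x)\setminus L(\bar x)$ (the active coordinates with $\bar y_i=0$). Assembling $N_{\gph\partial f}$ from the polyhedral pieces and reading off $(w,-v)\in N_{\gph\partial f}(\bar x,\bar y)$ yields the stated conditions: the single affine constraint $\sum_i y_i=1$ of $\Delta$ forces the common value $v_i=c$ on $L(\bar x)$ together with the balance $\sum_i w_i=0$; inactive coordinates give $w_i=0$ on $J^c(\bar x)$; and the complementarity on $J(\bar x)\setminus L(\bar x)$ produces the face selection encoded by $J_>(\bar x)$ and $J_<(\bar x)$, namely $w_i\ge 0$ on $J_>(\bar x)$ and $w_i=0$ on $J_<(\bar x)$. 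As a sanity check I would verify the formula on the two extreme cases of a fully positive $\bar y$ and a vertex $\bar y$, where the complementarity corner is visible by hand.

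The main obstacle is exactly this second-order computation, and within it the combinatorial bookkeeping over the ambiguous index set $J(\bar x)\setminus L(\bar x)$: one must enumerate the polyhedral pieces of $\gph\partial f$ adjacent to $(\bar x,\bar y)$, compute their regular normal cones, and correctly take the limiting union, all while keeping the sign conventions of \eqref{lim-cod} straight so that the roles of $v$ (producing the common value $c$ and the thresholds defining $J_>$ and $J_<$) and of $w$ (producing $\sum_i w_i=0$ together with the sign and vanishing conditions) come out precisely as in \eqref{2ndsubofmax}. This is the content that has been worked out for maximum functions in \cite{EH14,ms16} and, more generally, for polyhedral second-order subdifferentials in \cite{dr,hr,yy}; I would either invoke those computations directly or reproduce the short active-set case analysis sketched above.
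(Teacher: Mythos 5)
Your proposal is correct, but it does substantially more work than the paper, whose entire proof of this lemma consists of two citations: the proximal formula \eqref{proxmax} is referred to \cite[Example~6.49]{Beck}, and the second-order formula \eqref{2ndsubofmax} to \cite[Theorem~3.1]{EH14}. Your epigraphical reduction of the prox computation to the scalar problem $\min_t\,\psi(t)$ with $\psi(t)=t+\frac{1}{2}\sum_{i}(\max\{0,x_i-t\})^2$ is a clean, self-contained replacement for the Beck citation, and your existence/uniqueness argument for the root $s$ of $\psi'$ is sound (the uniqueness step is terse but repairable: at any root the sum equals $1>0$, so some index is active, and the sum is then strictly decreasing between two distinct roots). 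Incidentally, your derivation confirms that the summand in \eqref{s-equa} should read $x_i-s$ rather than $x_n-s$; the statement as printed contains a typo. For the generalized Hessian, your identification of $f$ as the support function of the simplex, the resulting polyhedral structure of $\gph\partial f$, and the active-set bookkeeping over $J(\bar x)\setminus L(\bar x)$ is exactly the content of the Emich--Henrion computation; you correctly flag this as the genuinely hard part and name \cite{EH14} as the place where it is carried out, which is precisely what the paper does. So the two routes coincide in their reliance on \cite{EH14} for the second-order formula, while your treatment of the proximal mapping is an independent derivation where the paper merely cites; the trade-off is that your version is longer but self-contained on that half.
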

\begin{proof} The proof for the proximal mapping formula \eqref{proxmax} can be found in \cite[Example~6.49]{Beck}. The calculation formula for the second-order subdifferential of the maximum function \eqref{2ndsubofmax} is obtained in \cite[Theorem~3.1]{EH14}. 
\end{proof}

We start implementing Algorithms~\ref{LS} to solve \eqref{C11testproblem} with the explicit computation of their ingredients (gradient and second-order subdifferential of $\varphi$) given entirely via the problem data. More specifically, we need to explicitly determine the gradient $\nabla\varphi(x)$ and the  \textit{coderivative-based Newton direction} $d\in\R^n$ generated by the inclusion 
\begin{equation}\label{newtonincl}
-\nabla \varphi(x)\in\partial^2\varphi(x)(d).
\end{equation}
The expressions for the gradient follows directly from formula \eqref{proxmax} telling us that
\begin{equation}\label{gradientC11testpb}
\nabla\varphi(x) = \nabla e_f(x) + x =  2x-  \text{\rm Prox}_f (x).
\end{equation}

Now we show how to construct $d\in \R^n$ satisfying inclusion \eqref{newtonincl}. For each $x\in \R^n$, define the vector $d$ by
\begin{equation}\label{dkincl}
\displaystyle d_i: = \begin{cases}
\frac{1}{2}\big(c_x - \left(\nabla \varphi(x)\big)_i\right) &\text{if} \quad i \in L(x),\\
\big(- \nabla \varphi(x)\big)_i & \text{otherwise},
\end{cases}
\end{equation} 
where the index set $L(x)$ and the number $c_x$ are given by
$$
L(x):=\big\{i \in \{1,\ldots,n\}\;\big|\;\big(\text{\rm Prox}_f (x)\big)_i =  f\big(\text{\rm Prox}_f(x)\big),\;\big( x - \text{\rm Prox}_f (x)\big)_i >0 big\},
$$ 
$$
c_x:=-\frac{1}{|L(x)|}\sum_{i\in L(x)}\big(\nabla \varphi(x)\big)_i.
$$

The next proposition verifies that the vector $d$ constructed in \eqref{dkincl} indeed satisfies inclusion \eqref{newtonincl}.

\begin{Proposition}\label{findcodNewtond} Let $\varphi:\R^n\to\R$ be given in \eqref{C11testproblem}. For each $x \in\R^n$, consider the vector $d\in \R^n$ defined in \eqref{dkincl}. Then inclusion \eqref{newtonincl} holds for this vector $d$. 
\end{Proposition}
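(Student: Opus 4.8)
The plan is to reduce the target inclusion $-\nabla\varphi(x)\in\partial^2\varphi(x)(d)$ to a coordinatewise verification of the generalized Hessian formula \eqref{2ndsubofmax}. First I would split $\varphi=e_f+\tfrac12\|\cdot\|^2$ and apply the second-order sum rule \cite[Proposition~1.121]{Mordukhovich06}, which gives $\partial^2\varphi(x)(d)=\partial^2 e_f(x)(d)+d$ since the quadratic term contributes $\nabla^2(\tfrac12\|\cdot\|^2)=I$. Hence, setting $w:=-\nabla\varphi(x)-d$, the desired inclusion is equivalent to $w\in\partial^2 e_f(x)(d)$. Next I would invoke the Moreau-envelope second-order formula \cite[Lemma~6.4]{BorisKhanhPhat} with $\gamma=1$ and $g=f$: writing $p:=\text{\rm Prox}_f(x)$ and $y:=x-p$, it states $w\in\partial^2 e_f(x)(d)$ if and only if $w\in\partial^2 f(p,y)(d-w)$. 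Since $p=\text{\rm Prox}_f(x)$ yields $y=x-p\in\partial f(p)$, the generalized Hessian $\partial^2 f(p,y)$ is well defined, and the whole statement reduces to checking $w\in\partial^2 f(p,y)(v)$ with $v:=d-w=2d+\nabla\varphi(x)$.

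Then I would substitute the explicit direction $d$ from \eqref{dkincl} and compute $v$ and $w$ coordinatewise, writing $g_i:=(\nabla\varphi(x))_i$. On $i\in L(x)$ one has $d_i=\tfrac12(c_x-g_i)$, giving $v_i=2d_i+g_i=c_x$ and $w_i=-g_i-d_i=-\tfrac12(g_i+c_x)$; on $i\notin L(x)$ one has $d_i=-g_i$, giving $v_i=-g_i$ and $w_i=0$. It remains to identify the index sets in \eqref{2ndsubofmax}, which are evaluated at the base point $p$ and subgradient $y$, not at $x$; the key observation is that the set $L(x)$ from \eqref{dkincl} coincides with $\{i\in J(p):y_i>0\}$, which is exactly the set $L$ appearing in \eqref{2ndsubofmax} for the pair $(p,y)$, and I will denote it $L(p)$. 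I would also use that, for the max function, every $y\in\partial f(p)$ satisfies $y_i\ge 0$ on $J(p)$ and $y_i=0$ off $J(p)$, so $J(p)\setminus L(p)=\{i\in J(p):y_i=0\}$.

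With the free constant in \eqref{2ndsubofmax} chosen as $c:=c_x$, I would then verify the four conditions in turn: the equality $v_i=c$ on $L(p)$ holds by the first computation above; the support and sign conditions $w_i=0$ on $J^c(p)\cup J_<(p)$ and $w_i\ge 0$ on $J_>(p)$ hold automatically, because $w_i=0$ for every $i\notin L(p)$ while $J_>(p)\cup J_<(p)$ (strict inequalities against $c_x$) is disjoint from $L(p)$; and the balance condition $\sum_{i}w_i=0$ follows from $\sum_{i\in L(p)}w_i=-\tfrac12\big(\sum_{i\in L(p)}g_i+|L(p)|\,c_x\big)=0$, which is precisely how the normalizing constant $c_x=-|L(x)|^{-1}\sum_{i\in L(x)}g_i$ was defined. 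The main obstacle I anticipate is bookkeeping rather than analysis: one must keep the index sets attached to the correct base point $p=\text{\rm Prox}_f(x)$ and subgradient $y=x-p$, so that the generalized Hessian is taken at $(p,y)$ as required by the Moreau-envelope lemma, and confirm that the engineered values $v_i\equiv c_x$ on $L(p)$ together with $w_i\equiv 0$ off $L(p)$ render the sign constraints vacuous; the single genuinely computational step is the balance identity $\sum_i w_i=0$, where the definition of $c_x$ is used.
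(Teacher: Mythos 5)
Your proposal is correct and follows essentially the same route as the paper's proof: the second-order sum rule to peel off the quadratic term, the Moreau-envelope coderivative lemma \cite[Lemma~6.4]{BorisKhanhPhat} to pass to $\partial^2 f$ at $(\text{\rm Prox}_f(x),\,x-\text{\rm Prox}_f(x))$, and a coordinatewise verification of the conditions in \eqref{2ndsubofmax} with $c:=c_x$, where the balance identity $\sum_i w_i=0$ comes from the definition of $c_x$. Your extra care in tracking that the index sets of \eqref{2ndsubofmax} are anchored at the prox point rather than at $x$ is a point the paper leaves implicit, but the argument is the same.
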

\begin{proof}
Applying the second-order subdifferential sum rule from \cite[Proposition~1.121]{Mordukhovich06} to $\ph$ in \eqref{C11testproblem} gives us
$$
\partial^2\varphi(x)(w) = \partial^2 e_f(x)(w) + w \quad \text{for all }\; w \in \R^n.
$$
This tells us that the inclusion $-\nabla\varphi(x) \in \partial^2\varphi(x)(w)$ is equivalent to
$$
-\nabla \varphi(x) - w \in \partial^2 e_f(x)(w).  
$$
It follows from \cite[Lemma 6.4]{BorisKhanhPhat} that the latter inclusion can be equivalently rewritten as 
\begin{equation}\label{Nmdirection}
-\nabla \varphi(x) - w \in\partial^2 f\big(\text{\rm Prox}_f(x), x - \text{\rm Prox}_f (x)\big)\big(\nabla \varphi(x) + 2w\big). 
\end{equation}  
We only need to show that $w:=d$, for $d$ defined in \eqref{dkincl}, solves \eqref{Nmdirection}. Indeed, fixing any indices $i \in L(x)$ and $j \in \{1,\ldots,n\}\setminus L(x)$ gives us the equalities
\begin{equation}\label{1stconofdk}
\big(\nabla \varphi(x) + 2d\big)_i  =c_x, \quad \big(-\nabla\varphi(x) -d\big)_j = 0,\;\mbox{ and}
\end{equation}
\begin{equation}\label{2ndconofdk}
\begin{array}{ll}
\disp\sum_{i=1}^n\big(-\nabla \varphi(x) - d \big)_i & =\disp\sum_{i \in L(x)}\big(-\nabla \varphi(x) - d\big)_i + \disp\sum_{i \in \{1,\ldots,n\}\setminus L(x)}\big(-\nabla \varphi(x) - d \big)_i \\ 
&=\disp-\frac{1}{2}\sum_{i \in L(x)}\big(c_x + (\nabla\varphi(x))_i\big) =0. 
\end{array}
\end{equation}
Combining \eqref{1stconofdk} and \eqref{2ndconofdk}, we deduce from Lemma~\ref{prcalofmax} that $d$ solves \eqref{Nmdirection} and thus complete the proof. 
\end{proof}

Next we present specifications of Algorithm~\ref{LS} and Theorems~\ref{solvingQP}, \ref{solvingQP2} on its performance for the case of the testing optimization problem \eqref{C11testproblem}.

\begin{Theorem}[\bf solving the testing $\mathcal{C}^{1,1}$ optimization  problem by GDNM]  Consider the testing optimization problem \eqref{C11testproblem}. Then Algorithm~{\rm\ref{LS}}, with all its ingredients calculated in \eqref{gradientC11testpb} and \eqref{dkincl}, either stops after finitely many iterations, or produces a sequence $\{x^k\}$ such that it globally Q-superlinearly converges to $\bar{x}$, which is the unique solution to problem \eqref{C11testproblem}, while being also a tilt-stable local minimizer of 
 the function $\varphi$ from \eqref{C11testproblem}. Furthermore, the sequences $\{\varphi(x^k)\}$ and $\{\nabla\varphi(x^k)\}$ Q-superlinearly converge to $\text{\rm min}\;\varphi$ and $0$, respectively. 
\end{Theorem}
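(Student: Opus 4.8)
The plan is to recognize \eqref{C11testproblem} as a strongly convex $\mathcal{C}^{1,1}$ problem to which the machinery of Section~\ref{sec:dampedC11} applies directly, so that the global linear rate comes from Theorem~\ref{globalconver} and its upgrade to the superlinear rate comes from Theorem~\ref{superlinearLS}; the only problem-specific input is the verification that the explicit direction \eqref{dkincl} is an admissible Newton direction, which is exactly Proposition~\ref{findcodNewtond}.

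First I would record the structural properties of $\varphi$. By \cite[Proposition~12.30]{Bauschke} the Moreau envelope $e_f$ of the convex function $f$ in \eqref{maxf} is convex and of class $\mathcal{C}^{1,1}$ with a globally Lipschitzian gradient; adding the quadratic $\tfrac12\|\cdot\|^2$ therefore makes $\varphi$ of class $\mathcal{C}^{1,1}$ and strongly convex on $\R^n$ with modulus $1$. Combining strong convexity with the Lipschitz continuity of $\nabla\varphi$ and invoking \cite[Theorem~5.1]{ChieuChuongYaoYen} gives the positive-definiteness of the generalized Hessian $\partial^2\varphi(x)$ at every $x\in\R^n$, in particular on the level set $\Omega$ from \eqref{level} associated with an arbitrary starting point. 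Since a strongly convex function is coercive, $\Omega$ is compact; this secures both the existence of an accumulation point of any generated sequence $\{x^k\}$ and, by strict convexity, the existence of a unique global minimizer $\bar{x}$ of $\varphi$.

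With these facts I would invoke Theorem~\ref{globalconver}: Algorithm~\ref{LS}, whose Step~3 is well-posed thanks to Proposition~\ref{findcodNewtond}, either stops finitely or produces $\{x^k\}\subset\Omega$ that converges to an accumulation point $\bar{x}$, a tilt-stable local minimizer of $\varphi$, with $\{\varphi(x^k)\}$ at least Q-linearly convergent and $\{x^k\}$, $\{\|\nabla\varphi(x^k)\|\}$ at least R-linearly convergent; uniqueness of the minimizer identifies $\bar{x}$ as the unique solution of \eqref{C11testproblem}. To reach the superlinear conclusion I would next verify the hypotheses of Theorem~\ref{superlinearLS}. Because $f$ is convex piecewise linear, the graph of $\partial f$ is a finite union of polyhedral convex sets, so $\partial f$ is semismooth$^*$ by the property recalled after Definition~\ref{semi*}; hence the resolvent $\text{\rm Prox}_f=(I+\partial f)^{-1}$ is semismooth$^*$ by \cite[Proposition~6]{fgh} and, being piecewise affine as seen from \eqref{proxmax}, is directionally differentiable. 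Since $\nabla\varphi(x)=2x-\text{\rm Prox}_f(x)$ by \eqref{gradientC11testpb}, the gradient $\nabla\varphi$ is locally Lipschitzian, semismooth$^*$, and directionally differentiable at $\bar{x}$, so condition (i) of Theorem~\ref{superlinearLS} holds and delivers the Q-superlinear convergence of $\{x^k\}$, together with the Q-superlinear convergence of $\{\varphi(x^k)\}$ to $\min\varphi=\varphi(\bar{x})$ and of $\{\nabla\varphi(x^k)\}$ to $0$.

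The structural verifications (convexity and $\mathcal{C}^{1,1}$-smoothness of $e_f$, positive-definiteness of $\partial^2\varphi$, compactness of $\Omega$) are immediate from the cited results, and the heavy analytical work is already absorbed into the abstract Theorems~\ref{globalconver} and \ref{superlinearLS}. The one step deserving care is the passage of the semismooth$^*$ and directional-differentiability properties from the polyhedral data of $f$ through $\text{\rm Prox}_f$ to $\nabla\varphi$; this is, however, exactly the mechanism underlying Proposition~\ref{semidirect}, so I expect no essentially new obstacle beyond correctly identifying $\text{\rm Prox}_f$ as a piecewise affine resolvent with semismooth$^*$ subdifferential.
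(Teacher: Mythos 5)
Your proposal is correct and follows essentially the same route as the paper: verify strong convexity (hence positive-definiteness of $\partial^2\varphi$ everywhere and compactness of the level set), use Proposition~\ref{findcodNewtond} for well-posedness of Step~3, and then apply Theorems~\ref{globalconver} and \ref{superlinearLS}. The only cosmetic difference is in checking the hypotheses of Theorem~\ref{superlinearLS}: the paper observes directly that $\nabla\varphi=2I-\text{\rm Prox}_f$ is piecewise linear and hence semismooth by \cite[Proposition~7.4.6]{JPang}, whereas you route the same conclusion through the semismooth$^*$ property of $\partial f$ and the resolvent calculus of \cite{fgh}; both are valid.
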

\begin{proof} It is clear that the cost function $\varphi$ in \eqref{C11testproblem} is strongly convex, and thus the generalized Hessian $\partial^2\varphi(x)$ is positive-definite for all $x \in \R^n$. Furthermore,  it follows from \eqref{gradientC11testpb}  and \cite[Proposition~7.4.6]{JPang} that the mapping $\nabla \varphi$ is semismooth on $\R^n$ due to its piecewise linearity.  Applying  now Theorems~\ref{globalconver}, \ref{superlinearLS} and Proposition~\ref{findcodNewtond} to this setting, we arrive at all the conclusions of the theorem.
\end{proof}

Due to Remark~\ref{compareremark1}{\bf(i)}, the underlying difference between our approach and the conventional semismooth Newton method is  in finding the \textit{generalized Newton directions}. To be more specific, the semismooth Newton method to solve \eqref{C11testproblem} is based on using generalized
Jacobian of $\nabla\varphi$ to determine the Newton directions $d$ as solutions to the system of linear equations
\begin{equation}\label{clarkeLiEq} 
-\nabla\varphi(x) = A d, \quad \text{where }\; A \in \partial_C\nabla\ph(x).
\end{equation} 
We have therefore in the setting of \eqref{C11testproblem} that
$$
\partial_C\nabla\ph(x) = 2I -  \partial_C \text{\rm Prox}_f (x)
$$
Consider further the index sets
$$
J(x):=\big\{i\in\{1,\ldots,n\}\;\big|\;\max \{0,x_i-s\} =0\big\},\quad \text{and }\; J^c(x): =\big\{1,\ldots,n\big\}\setminus J(x), 
$$
where $s \in \R$  is such that \eqref{s-equa} holds. It follows from \cite[Example~5.4]{pb14} that an element of $\partial_C \text{\rm Prox}_f (x)$ is the matrix $P$ with the entries
$$
{P_{ij}:= \begin{cases}
1+ 1/(n-|J|) &\text{if}\quad i \ne j, i,j \in J^c(x),\\
1/(n-|J|)&\text{if}\quad i = j, i,j \in J^c(x),\\
1 &\text{if}\quad i,j \in J(x).
\end{cases}}
$$
This tells us that finding a Newton direction $d$ in SNM requires solving the following system of linear equation:
\begin{equation}\label{ClarkeEquation}
-\nabla\varphi(x) = (2I-P)d. 
\end{equation}

Now we are ready to conduct numerical experiments to solve the testing problem \eqref{C11testproblem} by using our algorithm GDNM and the semismooth Newton method SNM. All the numerical experiments are conducted on a desktop with 10th Gen Intel(R) Core(TM) i5-10400 processor (6-Core, 12M Cache, 2.9GHz to 4.3GHz) and 16GB memory. All the codes are written in MATLAB 2016a. We test with different dimensions $n$ ranging from $200$ to $2000.$ The stopping criterion $\left\|\nabla\varphi(x^k)\right\|\le 10^{-6}$ is used in all the tests. The initial points for GDNM and SNM are the same being chosen as a randomly generated vector in $\R^n$ with i.i.d. (identically and independent distributed) standard Gaussian entries. The results are shown in Figure~\ref{fig:my_label}. As we can see from the results presented in this figure, our algorithm  GDNM is highly efficient in solving problem \eqref{C11testproblem}. 
A clear explanation for this is that in the setting under consideration, we can find a precise formula for generalized Newton directions from the second-order subdifferential inclusion \eqref{newtonincl} while SNM requires solving the system of linear equation \eqref{ClarkeEquation}. Typically, although solving the inclusion \eqref{newtonincl} might be more difficult than solving the system of linear equation \eqref{clarkeLiEq}, observe that calculus rules  for the second-order subdifferential $\partial^2 \varphi$ are much more developed than in the generalized Jacobian case $\partial_C \nabla \varphi$ of the semismooth Newton method \eqref{clarkeLiEq}. 
\color{black}
\begin{figure}
\centering
\includegraphics[scale=0.8]{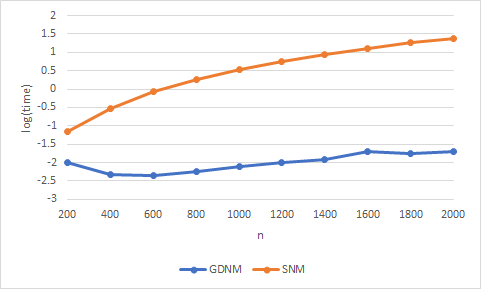}
\caption{CPU time for GDNM and SNM in the $\mathcal{C}^{1,1}$ optimization problem \eqref{C11testproblem}}
\label{fig:my_label}
\end{figure}
\color{black}\vspace*{-0.1in}

\subsection{Solving Lasso Problems}\label{l1 regular}\vspace*{-0.05in}

 {This subsection} is devoted to specifying both Algorithms~\ref{LSQP} and \ref{LSQP2} for the case of the basic Lasso problem stated below, and then to conducting numerical experiments for this problem and comparing them with the performances of some major first-order and second-order algorithms. The {\em basic Lasso problem}, known also as the $\ell^1$-{\em regularized least square optimization problem}, was introduced by Tibshirani \cite{Tibshirani}, and  {since then} it has been largely investigated and applied to various issues in statistics, machine learning, image processing, etc. This problem is formulated as follows:
\begin{eqnarray}\label{Lasso}
\text{minimize }\;\varphi(x):=\frac{1}{2}\|Ax-b\|_2^2+\mu\|x\|_1\quad\text{ subject to }\;x\in\R^n,
\end{eqnarray}
where $A$ is an $m\times n$ matrix, $\mu>0$, and $b\in\R^m$, and where $\|\cdot\|_1$ and $\|\cdot\|_2$ stand for the standard $p$-norms on $\R^n$. It is easy to see that the Lasso problem \eqref{Lasso} belongs to the class of convex composite optimization problems \eqref{QP}. Indeed, we can represent \eqref{Lasso} as minimizing the nonsmooth convex function $\varphi(x):=f(x)+g(x)$, where
\begin{equation}\label{fg}
f(x):=\frac{1}{2}\langle\Tilde{A}x,x\rangle+\langle\Tilde{b},x\rangle+\Tilde{\alpha}\quad\text{and }\;g(x):=\mu\|x\|_1
\end{equation}
with $\Tilde{A}:=A^*A$, $\Tilde{b}:=-A^*b$, and $\Tilde{\alpha}:=\frac{1}{2}\|b\|^2$, and where the matrix $\Tilde{A}=A^*A$ is symmetric and positive-semidefinite. Observe that the Lasso problem \eqref{Lasso} always admits an optimal solution \cite{Tibshirani}.\vspace*{0.05in}

We start implementing Algorithms~\ref{LSQP} and \ref{LSQP2} to solve \eqref{Lasso} with the explicit computation of their ingredients (proximal and subgradient mappings, generalized Hessian of $g$) given entirely via the problem data.

\begin{Proposition}[\bf explicit computations for the Lasso problem] \label{lasso-calc} Let $g(x)=\mu\|x\|_1$ be the regularizer in the Lasso problem \eqref{Lasso}. Then we have the calculation formulas:
\begin{equation}\label{proxofg}
\big(\text{\rm Prox}_{\gamma g}(x)\big)_i=\begin{cases}
x_i-\mu\gamma&\text{if}\quad x_i>\mu\gamma,\\
0&\text{if}\quad-\mu\gamma\le x_i\le\mu\gamma,\\
x_i+\mu\gamma&\text{if}\quad x_i<-\mu\gamma.
\end{cases}
\end{equation}
\begin{equation}\label{first-order}
\partial g(x)=\left\{v\in\R^n\;\bigg|\;
\begin{array}{@{}cc@{}}
v_i=\mu . \text{\rm sgn}(x_i),\;x_i\ne 0,\\
v_i\in[-\mu,\mu],\;x_i=0
\end{array}\right\} \quad\text{whenever }\; x\in\R^n.
\end{equation}
The generalized Hessian of $g$ is calculated by
\begin{equation}\label{second-order}
\partial^2g(x,y)(v)=\Big\{w\in\R^n\;\Big|\;\Big(\frac{1}{\mu} w_i,-v_i\Big)\in G\Big(x_i,\frac{1}{\mu} y_i\Big),\;i=1,\ldots,n\Big\}
\end{equation}
for each $(x,y)\in\text{\rm gph}\,\partial g$ and $v=(v_1,\ldots,v_n)\in\R^n$,
where the mapping $G\colon\R^2\tto\R^2$ is defined by
\begin{equation}\label{G}
G(t,p):=\begin{cases}
\{0\}\times\R&\text{\rm if}\quad t\ne 0,\;p\in\{-1,1\},\\
\R\times\{0\}&\text{\rm if}\quad t=0,\;p\in(-1,1),\\
(\R_{+}\times\R_{-})\cup(\{0\}\times\R)\cup(\R\times\{0\})&\text{\rm if}\quad t=0,\;p=-1,\\
(\R_{-}\times\R_{+})\cup(\{0\}\times\R)\cup(\R\times\{0\})&\text{\rm if}\quad t=0,\;p=1,\\
\emp&\text{\rm otherwise}.
\end{cases}
\end{equation}

\end{Proposition}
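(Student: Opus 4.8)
The plan is to establish the four formulas in the order stated, exploiting throughout the \emph{separability} of the regularizer $g(x)=\mu\|x\|_1=\sum_{i=1}^{n}\mu|x_i|$. First I would dispose of the proximal mapping \eqref{proxofg} and the subdifferential \eqref{first-order}, which both decouple coordinate-wise onto the scalar function $t\mapsto\mu|t|$. For the proximal mapping, the defining minimization \eqref{Prox} splits into $n$ one-dimensional problems $\min_{t}\{\mu|t|+\tfrac{1}{2\gamma}(t-x_i)^2\}$, each solved explicitly by the soft-thresholding rule, giving \eqref{proxofg}; this is the classical computation recorded, e.g., in \cite{Beck}. For \eqref{first-order} I would use that the convex subdifferential of a separable sum is the product of the scalar subdifferentials, together with $\partial|\cdot|(t)=\{\mathrm{sgn}\,t\}$ for $t\ne0$ and $\partial|\cdot|(0)=[-1,1]$, scaled by $\mu$.

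The core of the proposition is the second-order formula \eqref{second-order}. The key structural observation is that, under the canonical identification of $\R^n\times\R^n$ with $(\R^2)^n$ sending $(x,y)$ to $((x_1,y_1),\dots,(x_n,y_n))$, the graph of the subgradient mapping factorizes as a Cartesian product $\gph\partial g\cong\prod_{i=1}^{n}\gph\partial(\mu|\cdot|)$. Since the limiting normal cone to a Cartesian product of sets equals the product of the limiting normal cones (see \cite{Mordukhovich06}), the cone $N_{\gph\partial g}(x,y)$ factorizes accordingly, and hence by the coderivative definition \eqref{lim-cod} and \eqref{2nd} the condition $w\in\partial^2 g(x,y)(v)$ is equivalent to $(w_i,-v_i)\in N_{\gph\partial(\mu|\cdot|)}(x_i,y_i)$ for every $i$. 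A linear change of variables absorbing the factor $\mu$ (which rescales the second coordinate and, by positive homogeneity of the normal cone, does not affect membership) then reduces everything to the scalar mapping $S:=\partial|\cdot|$, and identifies the set-valued mapping $G$ in \eqref{G} with the limiting normal cone $N_{\gph S}(t,p)$ at the point $(t,p)=(x_i,\tfrac1\mu y_i)\in\gph S$.

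It then remains to compute $N_{\gph S}(t,p)$ at every point of $\gph S=\{(t,1):t>0\}\cup\{(t,-1):t<0\}\cup(\{0\}\times[-1,1])$, which yields the four nonempty cases of \eqref{G}, the fifth being vacuous since the coderivative is supported on $\gph\partial g$. On the two horizontal rays ($t\ne0$, $p=\pm1$) and in the interior of the vertical segment ($t=0$, $p\in(-1,1)$) the graph is a smooth one-dimensional manifold, so the normal cone is the orthogonal line, giving $\{0\}\times\R$ and $\R\times\{0\}$ respectively. The hard part, and the only genuinely nonsmooth computation, is at the two corner points $(0,\pm1)$ where a ray meets the segment. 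There I would first compute the contingent cone as the (nonconvex) union of the two emanating half-line directions, polarize it to obtain the regular normal cone, which is a closed quadrant ($\R_+\times\R_-$ at $(0,-1)$ and $\R_-\times\R_+$ at $(0,1)$), and then form the limiting normal cone as the outer limit \eqref{lnc} of regular normal cones along the graph. This adjoins to the corner quadrant the two limiting lines $\R\times\{0\}$ and $\{0\}\times\R$ inherited from the adjacent smooth pieces, producing exactly the third and fourth cases of \eqref{G}. Assembling the coordinate-wise conditions and undoing the $\mu$-scaling completes the proof.
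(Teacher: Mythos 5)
Your proposal is mathematically correct and complete, but it takes a different route from the paper only in the sense that the paper does not actually carry out these computations: its proof of this proposition is a two-line citation, deriving \eqref{proxofg} directly from the definition \eqref{Prox} (soft thresholding) and quoting \eqref{first-order}--\eqref{G} verbatim from Propositions~7.1 and 7.2 of the authors' earlier work \cite{BorisKhanhPhat}. What you supply is a self-contained derivation of the cited formulas, and every step checks out: the product rule for limiting normal cones is exact over Cartesian products, so the coordinate-wise reduction of $N_{\gph\partial g}$ under the identification $\R^n\times\R^n\cong(\R^2)^n$ is legitimate; the $\mu$-rescaling is absorbed correctly because the normal cone is a cone (your $(w_i,-\mu v_i)$ versus the paper's $(\tfrac1\mu w_i,-v_i)$ differ by the positive factor $1/\mu$); and your normal-cone computation at the corner points $(0,\pm1)$ of $\gph\partial|\cdot|$ -- polar of the contingent cone for the regular normal cone (a quadrant), then the outer limit \eqref{lnc} adjoining the lines $\{0\}\times\R$ and $\R\times\{0\}$ from the adjacent smooth pieces -- reproduces exactly the third and fourth branches of \eqref{G}. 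The trade-off is the usual one: the paper's citation keeps the exposition short and defers correctness to \cite{BorisKhanhPhat}, while your argument makes the proposition verifiable in place at the cost of about a page of elementary normal-cone calculus; if you were to write it out fully, the only steps deserving explicit justification are the equality (not mere inclusion) in the product formula for limiting normal cones and the identity $\widehat N_\Omega(\bar z)=T_\Omega(\bar z)^*$ used at the corners.
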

\begin{proof} The formula for the proximal mapping \eqref{proxofg} follows from definition \eqref{Prox} and the form of $g(\cdot)=\|\cdot\|_1$. The calculations of $\partial g$ and $\partial^2 g$ are taken from \cite[Propositions~7.1 and 7.2]{BorisKhanhPhat}, respectively.
\end{proof}

Let us present specifications of Algorithms~\ref{LSQP} and \ref{LSQP2} as well as Theorems~\ref{solvingQP} and \ref{solvingQP2} on their performances, respectively, for the Lasso problem \eqref{Lasso}.

\begin{Theorem}[\bf solving Lasso]\label{solveLasso} Considering the Lasso problem \eqref{Lasso}, we have the following:

{\bf(i)} Algorithm~{\rm\ref{LSQP}}, with all its ingredients calculated in Proposition~{\rm\ref{lasso-calc}}, either stops after
finitely many iterations, or produces a sequence $\{x^k\}$ such that it globally Q-superlinearly converges to $\bar{x}$, which is the unique solution to \eqref{Lasso} and a tilt-stable local minimizer of $\ph$ with modulus $\kappa:=1/\lambda_{\text{\rm min}(A^*A)}$, provided that the matrix $A^*A$ is positive-definite.

{\bf(ii)} Algorithm~{\rm\ref{LSQP2}}, with the positive-semidefinite matrix $A^*A$ and the ingredients calculated in Proposition~{\rm\ref{lasso-calc}}, either stops after finitely many steps, or produces a sequence $\{x^k\}$ such that any {accumulation point} $\ox$ of it is a solution to \eqref{Lasso}. If in addition $\partial\varphi$ is metrically regular around $(\bar{x},0)$ with modulus $\kappa>0$, then the sequence $\{x^k\}$ globally Q-superlinearly converges to $\bar{x}$, which is a tilt-stable local minimizer of $\varphi$ with the same modulus.
\end{Theorem}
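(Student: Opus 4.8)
The plan is to read Theorem~\ref{solveLasso} as a direct specialization of the general convergence results in Theorem~\ref{solvingQP} and Theorem~\ref{solvingQP2} to the particular instance \eqref{Lasso}, so that almost all of the work consists of checking that the structural hypotheses of those theorems hold for the specific regularizer $g=\mu\|\cdot\|_1$. First I would record that \eqref{Lasso} is exactly of the convex composite form \eqref{QP} under the decomposition \eqref{fg}, with the symmetric positive-semidefinite matrix $\tilde A=A^*A$; consequently Proposition~\ref{lasso-calc} supplies the closed forms of $\text{\rm Prox}_{\gamma g}$, $\partial g$, and $\partial^2 g$ needed to instantiate every ingredient appearing in Algorithms~\ref{LSQP} and \ref{LSQP2}.

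For assertion (i) I would invoke Theorem~\ref{solvingQP} with the quadratic matrix taken to be $A^*A$. When $A^*A$ is positive-definite, the smooth part $f$ in \eqref{fg} is strongly convex with modulus $\lambda_{\text{\rm min}(A^*A)}$, hence so is $\varphi=f+g$, which yields the existence and uniqueness of the minimizer $\ox$ together with its tilt stability with modulus $\kappa:=1/\lambda_{\text{\rm min}(A^*A)}$, precisely as delivered by Theorem~\ref{solvingQP}. The only remaining point is the superlinearity hypothesis. Here I would verify the semismooth$^*$ property of $\partial g$ at $(\ox,\ov)$ with $\ov:=-A^*A\ox+A^*b$: since $g=\mu\|\cdot\|_1$ is convex piecewise linear, its subgradient graph $\gph\partial g$ is a finite union of convex polyhedral sets, so $\partial g$ is semismooth$^*$ at every point by the sufficient condition recalled after Definition~\ref{semi*}. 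Moreover, being convex piecewise linear-quadratic, $g$ is twice epi-differentiable at $\ox$ for $\ov$, so condition~(ii) of Theorem~\ref{solvingQP} is met automatically and the Q-superlinear rates of $\{x^k\}$, $\{\varphi(x^k)\}$, and $\{\nabla\varphi(x^k)\}$ follow at once.

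For assertion (ii) I would run the parallel argument through Theorem~\ref{solvingQP2}, now with $A^*A$ merely positive-semidefinite. Theorem~\ref{solvingQP2}(i) directly gives that every accumulation point of $\{x^k\}$ solves \eqref{Lasso}. Adding the metric regularity of $\partial\varphi$ around $(\ox,0)$ with modulus $\kappa$, Theorem~\ref{solvingQP2}(ii) produces the global R-linear convergence to $\ox$ and the tilt stability of $\ox$ with the same modulus $\kappa$; the twice epi-differentiability of $g=\mu\|\cdot\|_1$ noted above then activates condition~(b) of Theorem~\ref{solvingQP2}(iii) and upgrades the rate to Q-superlinear, which is exactly the stated conclusion.

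Since every step is an application of an already-established theorem, I do not anticipate a genuine obstacle; the only point demanding care is the faithful transcription of the Lasso data into the hypotheses. In particular, I would double-check that $\ov=-\tilde A\ox-\tilde b=-A^*A\ox+A^*b$ is the subgradient at which the semismooth$^*$ reading of $\partial g$ must be performed, and that the tilt-stability modulus reported by Theorem~\ref{solvingQP} specializes correctly to $1/\lambda_{\text{\rm min}(A^*A)}$ under positive-definiteness of $A^*A$. These are bookkeeping verifications rather than real difficulties, because the two structural facts about the $\ell^1$-norm that drive the argument---piecewise linearity of $g$, entailing both the semismooth$^*$ property of $\partial g$ and the twice epi-differentiability of $g$---are standard and require no new estimates.
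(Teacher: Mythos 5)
Your proposal is correct and follows essentially the same route as the paper: both verify the semismooth$^*$ property of $\partial g$ from the polyhedral structure of $\gph\partial g$ given by \eqref{first-order}, obtain twice epi-differentiability of $g=\mu\|\cdot\|_1$ from its convex piecewise linearity (the paper cites \cite[Proposition~13.9]{Rockafellar98}), and then apply Theorems~\ref{solvingQP} and \ref{solvingQP2} directly. The bookkeeping points you flag, including the identification $\ov=-A^*A\ox+A^*b$, are handled implicitly in the paper in exactly the way you describe.
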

\begin{proof} Observe by \eqref{first-order} that the graph of $\partial g$ is the union of finitely many closed convex sets, and hence $\partial g$ is semismooth$^*$ on its graph; see \cite{Helmut}. Furthermore, $g$ is a proper, convex, and  {piecewise linear function} on $\R^n$. Then it follows from \cite[Proposition~13.9]{Rockafellar98} that $g$ is twice epi-differentiable on $\R^n$. Applying Theorems~\ref{solvingQP} and \ref{solvingQP2}, we arrive at all the conclusions  in (i) and (ii) of this theorem, respectively.\end{proof}

\color{black}
To run Algorithms~\ref{LSQP} and \ref{LSQP2}, we need to explicitly determine the sequences $\{v^k\}$ and $\{d^k\}$ generated by these algorithms. The expressions for $v^k$ follows directly from formula \eqref{proxofg} telling us that
$$
\left(v^k\right)_i=\begin{cases}
(u^k)_i-\mu\gamma&\text{if}\quad (u^k)_i>\mu\gamma,\\
0&\text{if}\quad-\mu\gamma\le (u^k)_i\le\mu\gamma,\\
(u^k)_i+\mu\gamma&\text{if}\quad (u^k)_i<-\mu\gamma,
\end{cases} \quad \text{where }\; u^k= x^k - \gamma (A^*Ax^k+b). 
$$
Using further the formulas in \eqref{first-order}--\eqref{G}, we express $d^k$ in Algorithm~\ref{LSQP} via the conditions
\begin{equation*}
\begin{cases}
\big(-\frac{1}{\gamma}(x^k-v^k)-A^*Ad^k \big)_i=0&\text{if}\quad\left(v^k\right)_i\ne 0,\\
\big(x^k-v^k + d^k\big)_i=0&\text{if}\quad\left(v^k\right)_i=0.
\end{cases}
\end{equation*}
Thus $d^k$ can be computed for each $k\in\N$ by solving the linear equation $X^k d = v^k-x^k$, where
\begin{equation}\label{X1}
(X^k)_i := \begin{cases}
\gamma(A^*A)_i & \text{if} \quad (v^k)_i \ne 0,\\
I_i  & \text{if}\quad (v^k)_i=0.
\end{cases}
\end{equation}

Similarly, by employing the calculations of Proposition~\ref{lasso-calc} in the framework of Algorithm~\ref{LSQP2} and by performing elementary transformations, we get the linear equation
\begin{equation*}
(BX^k+\gg\mu_k I)d^k=B(v^k-x^k)
\end{equation*}
to find the direction $d^k$, where $X^k$ is computed in \eqref{X1}, and  $B:=I-\gamma A^*A$.\vspace*{0.05in}

Now we are ready to conduct numerical experiments for solving the Lasso problem \eqref{Lasso} by using our globally convergent coderivative-based Generalized Damped Newton Method (GDNM) via Algorithm~\ref{LSQP} and globally convergent coderivative-based  {Generalized Regularized Newton Method (GRNM)}  via Algorithm~\ref{LSQP2}. The obtained calculations are compared with those obtained by implementing the following highly recognized first-order and second-order algorithms:
\begin{itemize}
\item[\bf(i)] The {\em Alternating Direction Methods of
Multipliers \footnote{\href{https://web.stanford.edu/~boyd/papers/admm/lasso/lasso.html}{https://web.stanford.edu/~boyd/papers/admm/lasso/lasso.html}}} (ADMM); see \cite{BPCPE,gabay,glomar}.
\item[\bf(ii)] The {\em Fast Iterative Shrinkage-Thresholing
Algorithm\footnote{\href{https://github.com/he9180/FISTA-lasso}{https://github.com/he9180/FISTA-lasso}}} (FISTA) with the code presented in \cite{BeckTebou}.
\item[\bf(iii)] The {\em Semismooth Newton Augmented Lagrangian
Method\footnote{\href{https://www.polyu.edu.hk/ama/profile/dfsun/}{https://www.polyu.edu.hk/ama/profile/dfsun/}}} (SSNAL) recently developed in \cite{lsk}.
\end{itemize}

 {All the numerical experiments are conducted in the same desktop and software described in Section \ref{testC11experiment}.} All the codes are written in MATLAB 2016a.  In our numerical experiment, $A$ is generated randomly with i.i.d. (identically and independent distributed) standard Gaussian entries,  where $b$ is generated randomly with values of components are from $0$ to $1$. In some particular tests, we normalize each column of $A$ so that $A^*A$ is close to singular and mark them with symbol $^*$ for identification. In summary, $A^*A$ is nonsingular in Tests 3, 4, 7, 8, and it is singular or close to singular in all the other tests. Table \ref{table 2} contains 2 tests where $n>m$ and the matrix $A^*A$ is nonsingular, 2 tests where $n>m$ and the matrix $A^*A$ is singular, 2 tests where $n=m$ and the matrix $A^*A$ is nonsingular, 2 tests where $n=m$ and the matrix $A^*A$ is singular, and 2 tests when $m>n$. To simplify the numerical implementations for solving the Lasso problem \eqref{Lasso}, we set $\mu:= 10^{-3}$ as the tuning parameter for all the tests. If an algorithm cannot start iterating in the first step, it is marked by ``Error" word; this concerns only some cases of GDNM when the matrix $A^*A$ is not positive-definite.  In our numerical experiments, $x^0:=0$ is the starting point for each algorithm, and the following \textit{relative KKT residual} $\eta_k$ in \eqref{KKT} suggested in \cite{lsk} is used to  measure the accuracy of an approximate optimal solution $x^k$ for \eqref{Lasso}:
\begin{equation}\label{KKT}
\eta_k := \frac{\|x^k - \text{\rm Prox}_{\mu\|\cdot\|_1}(x^k-A^*(Ax^k-b)) \|}{1+\|x^k\|+\|Ax^k-b\|}.
\end{equation}
We stop the algorithms when either the condition $\eta_k <10^{-6}$ is satisfied, or the maximum computation time of $10000$ seconds is reached. The results of computations for this part are displayed in Table~\ref{table 2}. There `TN' stands for the test number, `iter' indicates the number of performed iterations, and `CPU time' stands for the time needed to achieve the prescribed accuracy of approximate solutions (the smaller the better).

As we can see from the results presented in Table~\ref{table 2}, our algorithms GDNM and GRNM are highly efficient when $A^*A$ is {\em nonsingular}, where the $Q$-superlinear convergence is guaranteed by Theorem~\ref{solveLasso}. They may behave even better than the other compared algorithms when $m\ge n$, which is the setting of various practically important models; see, e.g., \cite{EHJT04} for Lasso applications to diabetes studies where $m$ is much large than $n$, and \cite{BeckTebou} for $m=n$ with applications to image processing.

When the matrix $A^*A$ is {\em singular} (or close to be singular), our theoretical results do not guarantee the fast convergence of GDNM and GRNM, while the conducted numerical experiments show that GRNM performs better that GDNM and better than FISTA and ADMM in Table~\ref{table 2}, while usually worse than SSNAL. A partial explanation for this is that SSNAL is actually a {\em hybrid} algorithm, which combines the first-order augmented Lagrangian method to solve {\em dual} subproblems, which are strongly convex and of lower dimensions, with the subsequent applications  of the second-order semismooth Newton method. Such a combination exhibits a high efficiency in solving Lasso problems in the singular case.
\begin{sidewaystable}
\begin{minipage}{\textheight}\small
\caption{Solving (\ref{Lasso}) on random instances }\label{table 2}
\newcolumntype{R}{>{\raggedright \arraybackslash} X}
\newcolumntype{S}{>{\centering \arraybackslash} X}
\newcolumntype{T}{>{\raggedleft \arraybackslash} X}
\begin{tabular}{lllllllllllllll}
\hline
\multicolumn{3}{c}{Problem size and TN} & \multicolumn{1}{c}{} & \multicolumn{5}{c}{iter}& \multicolumn{1}{c}{} & \multicolumn{5}{c}{CPU time}\\
\cline{1-3}\cline{5-9}\cline{11-15}
\multicolumn{1}{c}{TN} & \multicolumn{1}{c}{m} & \multicolumn{1}{c}{n} & \multicolumn{1}{c}{} & \multicolumn{1}{c}{SSNAL} & \multicolumn{1}{c}{FISTA} & \multicolumn{1}{c}{ADMM} & \multicolumn{1}{c}{GRNM} & \multicolumn{1}{c}{GDNM} & \multicolumn{1}{c}{} & \multicolumn{1}{c}{SSNAL} & \multicolumn{1}{c}{FISTA} & \multicolumn{1}{c}{ADMM} & \multicolumn{1}{c}{GRNM} & \multicolumn{1}{c}{GDNM}  \\\hline
1  & 400                   & 800                   &  & 25    & 37742 & 22873& 1813 & {Error}   &  & 0.45  & 145.52& 10.89& 45.62& {Error}    \\
2  & 4000                  & 8000                  &  & 153   & 19173 & 19173& 2499 &  {Error}   &  & 847.87& 10000                 & 2359.36                  & 10000                 &  {Error}    \\
3  & 2000                  & 2000                  &  & 43    & 239701& 12785& 59   & 12   &  & 78.38 & 8138.94                   & 158.12                   & 11.07& 2.24  \\
4  & 4000                  & 4000                  &  & 246   & 73374 & 5970 & 59   & 218  &  & 1253.45                   & 10000                   & 320.81                   & 48.16& 178.91\\
5* & 2000                  & 2000                  &  & 22    & 3619  & 90501& 394  & 292  &  & 18.11 & 123.38& 1141.64                  & 65.60& 58.80 \\
6* & 4000                  & 4000                  &  & 24    & 3629  & 103868                   & 520  & 555  &  & 231.40& 462.53& 5166.16                  & 369.27                   & 474.74\\
7  & 800                   & 400                   &  & 4     & 430   & 10   & 6    & 3    &  & 0.14  & 0.86  & 0.02 & 0.11 & 0.08  \\
8  & 8000                  & 4000                  &  & 13    & 487   & 11   & 7    & 3    &  & 18.80 & 117.92& 3.67 & 8.46 & 4.39  \\
9* & 800                   & 400                   &  & 11    & 245   & 426  & 31   & 7    &  & 0.18  & 0.53  & 0.12 & 0.23 & 0.11  \\
10*& 8000                  & 4000                  &  & 11    & 238   & 411  & 72   & 9    &  & 8.37  & 59.18 & 32.17& 56.37& 8.88  \\
\hline
\end{tabular}
\end{minipage}
\end{sidewaystable}
\vspace*{-0.1in}

\subsection{Box Constrained Quadratic Programming}\vspace*{-0.05in}

This subsection is devoted to specifying Algorithms~\ref{LSQP} and \ref{LSQP2} for quadratic programming problems of the form
\begin{eqnarray}\label{QPbox}
\text{minimize }&\;\varphi(x):=\dfrac{1}{2}\langle Ax,x \rangle+\langle b,x\rangle\quad\\
\text{subject to}&\;l_i\le x_i\le L_i \text{ for all }i=1,\ldots,n \nonumber 
\end{eqnarray}
where $A$ is an $n\times n$ positive-semidefinite matrix, and where $b,l,L\in\R^n$ are such that $l_i\le L_i$ as $i=1,\ldots,n$.

\begin{Proposition}\label{calc-QPbox} Considering the indicator function $\delta_\Omega:\R^n\to\overline{\R}$ of the set  
\begin{equation}\label{boxc}
\Omega:=\big\{x\in\R^n\;\big|\;\ell\le x\le L\big\},
\end{equation}
we have the precise calculation formulas
\begin{equation}\label{projOmega}
\text{\rm Prox}_{\gamma g}(x)= P_\Omega(x)=\big(\text{\rm min}\{\text{\rm max}\big\{x_i,\ell_i\big\},L_i\big\}\big)_{i=1}^n\quad\text{for }\;x\in\R^n,\;\gamma>0,
\end{equation} 
\begin{equation}\label{sudOmega} 
\partial\delta_\Omega(x)=N_\Omega(x)=F_1(x_1)\times\ldots \times  F_n(x_n) \quad\text{for }\; x\in\Omega,\;\mbox{ where}
\end{equation} 
$$
F_i(t)=\begin{cases}
(-\infty,0]&\text{if }\quad t =\ell_i,\\
[0,\infty)&\text{if }\quad t = L_i,\\
\{0\} &\text{if }\quad t\in(\ell_i,L_i).
\end{cases}
$$
The generalized Hessian of $\delta_\Omega$ is calculated by
\begin{equation}\label{secorderOmega}
\partial^2\delta_\Omega(x,y)(v)=\big\{w\in\R^n\;\big|\;\big(w_i,-v_i\big)\in G_i\big(x_i,y_i\big),\;i=1,\ldots,n\big\},
\end{equation} 
where the mappings $G_i\colon\R^2\tto\R^2$, $i=1,\ldots,n$, are defined by
\begin{equation}\label{Gi}
G_i(t,p):=\begin{cases}
\R\times \{0\}&\text{\rm if}\quad t = \ell_i,p<0,\\
(\R_{-}\times \R_{+})\cup (\R \times \{0\})\cup (\{0\}\times \R)&\text{\rm if}\quad t=\ell_i,\;p=0,\\
\{0\}\times\R &\text{\rm if}\quad t\in (\ell_i,L_i),\; p =0,\\
(\R_{+}\times\R_{-})\cup(\{0\}\times\R)\cup(\R\times\{0\})&\text{\rm if}\quad t=L_i,\;p=0,\\
\R\times\{0\}&\text{\rm if}\quad t=L_i,\;p>0,\\
\emp&\text{\rm otherwise}.
\end{cases}
\end{equation}
\end{Proposition}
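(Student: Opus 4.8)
The plan is to exploit the \emph{separable} structure of the box $\Omega=\prod_{i=1}^n[\ell_i,L_i]$, which reduces every computation to the one-dimensional factors $[\ell_i,L_i]$. For the projection/proximal formula \eqref{projOmega}, I would first recall from Section~\ref{sec:dampnon} that $\text{\rm Prox}_{\gamma g}=P_\Omega$ whenever $g=\delta_\Omega$, and then use that the metric projection onto a Cartesian product of closed convex sets is the Cartesian product of the projections onto the factors. Since the projection onto an interval $[\ell_i,L_i]$ is the clamping $t\mapsto\min\{\max\{t,\ell_i\},L_i\}$, formula \eqref{projOmega} follows coordinate by coordinate. For the normal cone \eqref{sudOmega}, I would invoke the convexity of $\Omega$ to identify the limiting subdifferential $\partial\delta_\Omega$ with the classical normal cone $N_\Omega$, together with the product rule $N_{\prod_i\Omega_i}(x)=\prod_i N_{\Omega_i}(x_i)$ for convex product sets; the elementary one-dimensional normal cone $N_{[\ell_i,L_i]}(x_i)$ is exactly $F_i(x_i)$, which gives \eqref{sudOmega}.

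The core of the proposition is the generalized Hessian \eqref{secorderOmega}. By definition \eqref{2nd} we have $\partial^2\delta_\Omega(x,y)(v)=D^*N_\Omega(x,y)(v)$, so $w\in\partial^2\delta_\Omega(x,y)(v)$ if and only if $(w,-v)\in N_{\gph N_\Omega}(x,y)$. Because $N_\Omega$ acts coordinate-wise, a permutation of the $2n$ variables identifies $\gph N_\Omega$ with the Cartesian product $\prod_{i=1}^n\gph F_i\subset\prod_{i=1}^n\R^2$. I would then apply the product rule for the limiting normal cone of a Cartesian product (see \cite{Mordukhovich06,Rockafellar98}) to obtain, after undoing the permutation, $N_{\gph N_\Omega}(x,y)=\prod_{i=1}^n N_{\gph F_i}(x_i,y_i)$. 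Consequently the membership $(w,-v)\in N_{\gph N_\Omega}(x,y)$ decouples into the $n$ planar conditions $(w_i,-v_i)\in N_{\gph F_i}(x_i,y_i)$, which is precisely the asserted form with $G_i(x_i,y_i)=N_{\gph F_i}(x_i,y_i)$.

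It remains to compute, for each $i$, the planar limiting normal cone $N_{\gph F_i}(t,p)$ to the staircase-shaped graph $\gph F_i$, consisting of the downward ray $\{\ell_i\}\times(-\infty,0]$, the horizontal segment $[\ell_i,L_i]\times\{0\}$, and the upward ray $\{L_i\}\times[0,\infty)$. At relative-interior points of a vertical piece the set is locally a vertical line, so the normal cone is $\R\times\{0\}$ (the cases $t=\ell_i,\,p<0$ and $t=L_i,\,p>0$), while at interior points of the horizontal piece it is $\{0\}\times\R$ (the case $\ell_i<t<L_i,\,p=0$); off the graph the coderivative is empty. I expect the main obstacle to be the two corner points $(\ell_i,0)$ and $(L_i,0)$, where $\gph F_i$ is nonsmooth. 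There I would compute the limiting normal cone directly from its definition \eqref{lnc} as the outer limit of regular normal cones: the regular normal cone at the corner is the polar of the tangent cone spanned by the two incident rays, yielding the quadrant $\R_{-}\times\R_{+}$ at $(\ell_i,0)$ and $\R_{+}\times\R_{-}$ at $(L_i,0)$, and taking limits of regular normals along the two adjacent smooth pieces adds the lines $\R\times\{0\}$ and $\{0\}\times\R$. The resulting unions match the second and fourth cases of \eqref{Gi} and complete the verification. As a cross-check, since $\Omega$ is polyhedral, these formulas also agree with the general second-order computations for polyhedral indicator functions in \cite{dr,hr,yy}.
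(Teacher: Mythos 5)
Your argument is correct and follows the same separable-reduction strategy as the paper: both proofs exploit $\delta_\Omega=\sum_i\delta_{[\ell_i,L_i]}$ to reduce everything to the one-dimensional factors, and the first two formulas \eqref{projOmega} and \eqref{sudOmega} are obtained in essentially the identical way (the paper simply cites \cite[Lemma~6.26]{Beck} and \cite[Exercise~8.14 and Example~6.10]{Rockafellar98} for these facts). The only divergence is in how the coordinate-wise decoupling of $\partial^2\delta_\Omega$ in \eqref{secorderOmega} is justified: the paper invokes \cite[Theorem~4.3]{BorisOutrata}, which delivers the product representation of the second-order subdifferential of a separable sum directly, and then merely \emph{asserts} the identity $N_{\gph\partial\delta_{\Omega_i}}=G_i$ without computation; you instead derive the decoupling from first principles by permuting coordinates to identify $\gph N_\Omega$ with $\prod_i\gph F_i$ and applying the product rule for limiting normal cones, and you then verify $N_{\gph F_i}=G_i$ explicitly, including the corner points $(\ell_i,0)$ and $(L_i,0)$ where the union of the quadrant with the two lines in \eqref{Gi} arises as the outer limit \eqref{lnc} of regular normals along the adjacent smooth pieces. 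Your route is therefore more self-contained (it supplies exactly the planar computation the paper leaves implicit), at the cost of a slightly longer argument; the paper's route is shorter but rests on the external second-order product theorem. Both are valid.
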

\begin{proof} The formula for the proximal mapping \eqref{projOmega} follows from \cite[Lemma~6.26]{Beck}. Note that 
$$
\delta_\Omega(x)=\delta_{\Omega_1}(x_1)+\ldots +\delta_{\Omega_n}(x_n)\quad \text{for all }\;x = (x_1,\ldots,x_n)\in\R^n,
$$
where $\Omega_i:= [\ell_i, L_i] =\{t\in \R\;|\;\ell_i\le t\le L_i\}$, $i=1,\ldots,n$.  {Using \cite[Exercise~8.14 and Example 6.10]{Rockafellar98}, we obtain \eqref{sudOmega}}. It remains to verify the second-order subdifferential formula \eqref{secorderOmega} for $\delta_\Omega$ at $(x,y)\in\gph\partial\delta_\Omega$. Observe that $
N_{{\rm\small gph}\,\partial\delta_{\Omega_i}} = G_i \quad \text{for all }\;i = 1,\ldots, n$.
This allows us to deduce from \cite[Theorem~4.3]{BorisOutrata} the representation
\begin{equation*}
\partial^2\delta_\Omega(x,y)(v) = \big\{w\in\R^n\;\big|\;\big(w_i,-v_i\big)\in N_{{\rm\small gph}\,\partial\delta_{\Omega_i}}(x_i,y_i),\;i=1,\ldots,n\big\},
\end{equation*}
which therefore justifies the fulfillment of \eqref{secorderOmega} and completes the proof of the proposition.
\end{proof}

Next we obtain specifications of Algorithms~\ref{LSQP} and \ref{LSQP2} together with Theorem~\ref{solvingQP} and \ref{solvingQP2} on their performances, respectively, for the case of box constrained quadratic programming in \eqref{QPbox}.

\begin{Theorem}[\bf solving box constrained quadratic programs]\label{solveQPcon} Considering the quadratic programming problem \eqref{QPbox}, we have the following assertions:

{\bf(i)} Assume that the matrix $A$ is positive-definite. Then  Algorithm~{\rm\ref{LSQP}}, with all its ingredients calculated in Proposition~{\rm\ref{calc-QPbox}}, either stops after
finitely many iterations, or produces a sequence $\{x^k\}$ such that it globally Q-superlinearly converges to $\bar{x}$, which is the unique solution to \eqref{QPbox} and a tilt-stable local minimizer of $\ph$ with modulus $\kappa:=1/\lambda_{\text{\rm min}(A^*A)}$.

{\bf(ii)} Assume that the matrix $A$ is positive-semidefinite. Then Algorithm~{\rm\ref{LSQP2}}, with all its ingredients calculated in
Proposition~{\rm\ref{calc-QPbox}}, either stops after finitely many iterations, or produces a sequence $\{x^k\}$ such that any accumulation point $\ox$ of it is a solution to \eqref{QPbox}. If in addition $\partial\varphi$ is metrically regular around $(\bar{x},0)$ with some modulus $\kappa>0$, then the sequence $\{x^k\}$ globally Q-superlinearly converges to $\bar{x}$, which is a tilt-stable local minimizer of $\varphi$ with the same modulus.
\end{Theorem}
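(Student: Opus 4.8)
The plan is to recognize that \eqref{QPbox} is exactly the instance of the convex composite problem \eqref{QP} obtained by taking $g:=\delta_\Omega$, the indicator of the box $\Omega$ from \eqref{boxc}, together with $\alpha=0$, so that the whole statement reduces to checking the structural hypotheses of Theorems~\ref{solvingQP} and \ref{solvingQP2} and then invoking them verbatim. This follows the same pattern already used in the proof of Theorem~\ref{solveLasso} for the Lasso regularizer, and the two ingredients to verify are identical: the semismoothness$^*$ of $\partial g$ on its graph, and the twice epi-differentiability of $g$.

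First I would establish that $\partial g=N_\Omega$ is semismooth$^*$ at every point of its graph. The key observation, read off from the separable description \eqref{sudOmega}--\eqref{Gi}, is that $N_\Omega=F_1\times\cdots\times F_n$, where each graph $\gph F_i$ is the union of the three closed convex (polyhedral) pieces $\{\ell_i\}\times(-\infty,0]$, $[\ell_i,L_i]\times\{0\}$, and $\{L_i\}\times[0,\infty)$. Since a product of finite unions of convex sets is again a finite union of products of convex sets, the graph of $\partial g$ is a union of at most $3^n$ closed convex polyhedra. Hence $\partial g$ is semismooth$^*$ on $\gph\partial g$ by the result of \cite{Helmut} on mappings whose graphs are finite unions of closed convex sets.

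Next I would verify the twice epi-differentiability of $g=\delta_\Omega$. Because $\Omega$ is polyhedral and $\delta_\Omega$ vanishes on its domain, $g$ is a proper, l.s.c., convex piecewise linear-quadratic function, so \cite[Proposition~13.9]{Rockafellar98} guarantees that it is twice epi-differentiable at every point of $\Omega$, in particular at the relevant base point $\ox$ for $\ov:=-A\ox-b$ arising from the stationarity condition $0\in\partial\varphi(\ox)$. Both properties thus hold globally, which automatically covers whichever accumulation point $\ox$ the algorithm produces.

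Having checked these hypotheses, the conclusions follow by specialization. For assertion (i), positive-definiteness of $A$ makes $\varphi$ strongly convex with modulus $\lambda_{\text{\rm min}(A)}$, so Theorem~\ref{solvingQP} applies and yields the unique minimizer, its tilt stability with the reciprocal modulus, and---via the twice epi-differentiability route, condition (ii) of that theorem---the global Q-superlinear convergence. For assertion (ii), positive-semidefiniteness of $A$ places us in the setting of Theorem~\ref{solvingQP2}: part~(i) there gives that every accumulation point of $\{x^k\}$ solves \eqref{QPbox}, and, once $\partial\varphi$ is metrically regular around $(\ox,0)$ with modulus $\kappa$, parts~(ii)--(iii) deliver the global R-linear convergence, the tilt stability of $\ox$ with modulus $\kappa$, and the Q-superlinear rate through condition (b). I expect no deep obstacle here; the only point requiring care is the product-of-unions argument showing that $\gph N_\Omega$ is a finite union of closed convex sets, since this is precisely what legitimizes the appeal to the semismooth$^*$ criterion of \cite{Helmut}.
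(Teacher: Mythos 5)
Your proposal is correct and follows essentially the same route as the paper: reduce \eqref{QPbox} to the composite form \eqref{QP} with $g=\delta_\Omega$, observe from \eqref{sudOmega} that $\gph\partial g$ is a finite union of closed convex (polyhedral) sets so that $\partial g$ is semismooth$^*$ by \cite{Helmut}, verify twice epi-differentiability of $g$, and then invoke Theorems~\ref{solvingQP} and \ref{solvingQP2}. The only (immaterial) divergence is in the twice epi-differentiability step, where you cite \cite[Proposition~13.9]{Rockafellar98} for $\delta_\Omega$ as a convex piecewise linear function---exactly as the paper does for the Lasso regularizer in Theorem~\ref{solveLasso}---whereas the paper here goes through full amenability via \cite[Example~10.24]{Rockafellar98} and \cite[Corollary~13.15]{Rockafellar98}; both citations are valid for the polyhedral indicator.
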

\begin{proof} Reduce \eqref{QPbox} to the equivalent form of convex composite optimization:
$$
\min\quad f(x) + g(x) \quad \text{subject to }\; x\in\R^n,\;\mbox{ where}
$$
$$
f(x):=\frac{1}{2}\langle Ax,x\rangle + \langle b,x\rangle, \quad g(x):= \delta_\Omega(x), \quad\Omega:=\big\{x\in\R^n\;\big|\;\ell\le x\le L\big\}. 
$$
Observe by \eqref{sudOmega} that the graph of $\partial g$ is the union of finitely many closed convex sets, and hence $\partial g$ is semismooth$^*$ on its graph by \cite{Helmut}. Since $\Omega$ is polyhedral, it follows from \cite[Example~10.24]{Rockafellar98} that $g$ is {\em fully amenable} on $\R^n$ in the sense of \cite{Rockafellar98}. Then using \cite[Corollary~13.15]{Rockafellar98} tells us that $g$ is twice epi-differentiable on $\R^n$. Applying now Theorems~\ref{solvingQP} and \ref{solvingQP2}, we verify both assertions of this theorem.
\end{proof}

To run Algorithms~\ref{LSQP} and \ref{LSQP2}, we need to explicitly determine the sequences $\{v^k\}$ and $\{d^k\}$ generated by these algorithms. The expressions for $v^k$ follows directly from \eqref{projOmega}, which tells us that
$$
v^k=\big(\text{\rm min}\big\{\text{\rm max}\{u_i,\ell_i\big\},
L_i\big\}\big)_{i=1}^n.
$$
Using further the formulas in \eqref{sudOmega}--\eqref{Gi}, we express $d^k$ in Algorithm~\ref{LSQP} via the conditions
\begin{equation*}
\begin{cases}
\big(-\frac{1}{\gamma}(x^k-v^k)-Ad^k \big)_i=0&\text{if}\quad\left(u^k-v^k\right)_i= 0,\\
\big(-x^k-v^k + d^k\big)_i=0&\text{if}\quad\left(u^k-v^k\right)_i\ne 0.
\end{cases}
\end{equation*}
Thus $d^k$ can be computed for each $k\in\N$ by solving the linear equation $X^k d = v^k-x^k$, where
\begin{equation}\label{X2}
(X^k)_i := \begin{cases}
\gamma A_i & \text{if} \quad (u^k-v^k)_i = 0,\\
I_i  & \text{if}\quad (u^k-v^k)_i\ne 0.
\end{cases}
\end{equation}
 Similarly, by employing the calculations of Proposition~\ref{calc-QPbox} in the framework of Algorithm~\ref{LSQP2} and by performing elementary transformations, we get the linear equation
\begin{equation*}
(BX^k+\gg\mu_k I)d^k=B(v^k-x^k)
\end{equation*}
to find the direction $d^k$, where $X^k$ is computed in \eqref{X2}, and  $B:=I-\gamma A$.\vspace*{0.05in}
Now we are ready to conduct numerical experiments for solving quadratic programming problems with box constraints by using our GDNM via Algorithm \ref{LSQP} and GRNM via Algorithm \ref{LSQP2}. Our methods are compared with the \textit{trust region reflective algorithm} in MATLAB's quadratic programming solver.  {All the numerical experiments are conducted in the same desktop and software described in Section \ref{testC11experiment}.}

To get the positive-semidefinite matrix $A$, we generate a random $n\times n$ matrix $C$ with i.i.d.\ standard uniform entries and then define $A:=C^*C$. In some particular tests, we put $A:=C^*C/10^7$ so that $A$ is close to be singular and mark these tests by $*$. Then the vectors $b$ and $l$ are generated randomly with i.i.d.\ standard uniform entries. To get the vector $L\in\R^n$ such that $l_i\le L_i$ for all $i=1,\ldots,n$, entries of $L$ are generated independently with uniform distribution on the interval $(1,2)$. The initial points are all ones vector for all the tests and all the algorithms. As suggested in the MATLAB built in quadratic programming solver, the stopping criterion used is the function tolerance one, i.e.,
\begin{align*}
\dfrac{|f(x^k)-f(x^{k+1})|}{1+|f(x^{k})|}\le\varepsilon.
\end{align*}
The tolerance $\varepsilon$ is chosen to be $10^{-9}$ for all the tests and all the algorithms. The results of numerical experiments in this part are shown in Table~\ref{numerical:quad}. In this table, `TR' refers to the {\em trust region reflective method} while other information is the same as in Subsection~\ref{l1 regular}.
\begin{table}[H]
\centering 
\begin{tabular}{lllccccccc} 
\hline 
\multicolumn{2}{c}{TN and size}                   &  & \multicolumn{3}{c}{Iter} & \multicolumn{1}{l}{} & \multicolumn{3}{c}{CPU time}  \\ 
\cline{1-2}\cline{4-6}\cline{8-10}
\multicolumn{1}{c}{TN} & \multicolumn{1}{c}{size} &  & TR & GDNM & GRNM         & \multicolumn{1}{l}{} & TR    & GDNM  & GRNM          \\ 
\hline
1                      & 200                      &  & 6  & 4    & 6            &                      & 0.16  & 0.07  & 0.02          \\
2                      & 500                      &  & 7  & 7    & 6            &                      & 0.08  & 0.05  & 0.04          \\
3                      & 2000                     &  & 7  & 7    & 6            &                      & 1.38  & 1.30  & 2.40          \\
4                      & 5000                     &  & 9  & 8    & 6            &                      & 8.03  & 9.06  & 18.33         \\
5*                     & 200                      &  & 7  & 2    & 5            &                      & 0.61& 0.08  & 0.02          \\
6*                     & 500                      &  & 9  & 3    & 5            &                      & 0.11  & 0.03  & 0.04          \\
7*                     & 2000                     &  & 10 & 10   & 8            &                      & 1.61  & 2.00  & 2.97          \\
8*                     & 5000                     &  & 15 & 39   & 6            &                      & 14.58 & 54.89 & 21.54         \\
\hline
\end{tabular}
\caption{Solving box constrained quadratic programming on random instances}\label{numerical:quad}
\end{table}
The obtained results show that our algorithms GDNM and GRNM are more efficient when the size of the problem is rather small; see, e.g., Tests 1, 2, 3 and 5, 6. It can also be seen that GRNM is more stable than GDNM when the size of the problems is increasing. Tests 4, 7 and 8 indicate that our methods should be further improved to solve problems in high-dimensional spaces.\color{black}

\section{Conclusions and Future Research}\label{sec:conclusion}

In this paper we propose and develop two globally convergent generalized Newton methods to solve problems of ${\cal C}^{1,1}$ optimization and of convex composite optimization with extended-real-valued regularizers, which include nonsmooth problems of constrained optimization. The developed algorithms are far-going extensions of the classical damped Newton method and of the  {regularized Newton} algorithm with the replacement of the standard Hessian by its generalized version applied to nonsmooth (of the second order) functions. {The latter construction} is coderivative generated, which coins the names of our generalized Newton methods. The obtained results demonstrate the efficiently of both algorithms, their global superlinear convergence under appropriate assumptions, and their applications to the solution of Lasso problems   and of box constrained problem of quadratic programming  with conducting numerical experiments.

Our future research includes developing hybrid generalized Newton methods, which contain subproblems that can be efficiently solved by using first-order algorithms, and then combining them with the advanced second-order Newton-type techniques. We intend to establish the global superlinear convergence of iterates under relaxed assumptions that do not involve the positive-definiteness of the generalized Hessian in ${\cal C}^{1,1}$ optimization as well as the strong convexity requirement for problems of convex composite optimization, which will go beyond those with quadratic smooth parts. The obtained results would allow us to develop new applications to Lasso problems as well as to other important classes of models in machine learning, statistic, and related disciplines.\vspace*{-0.1in}

\section*{Appendix: Some Technical Lemmas}\label{sec:Appendix}

This section contains four technical lemmas used in the text.  

\medskip 
The first lemma is a local version of \cite[Lemma~2.20]{Solo14}. The proof of this result is similar to the original one, and thus it is omitted.

\begin{Lemma} \label{estimate1} Let  $\Omega \subset \R^n$ be an open set, and let $\varphi:\Omega \to\R$ be a continuously differentiable function such that $\nabla\varphi$ is Lipschitz continuous with modulus $L >0$. Then for any $\sigma>0$,  $x \in \Omega$, and $d \in \R^n$ satisfying $\langle \nabla \varphi(x),d\rangle <0$, the following inequality 
\begin{equation}\label{est1}
\varphi(x+\tau d)\leq \varphi(x) +\sigma \tau \langle \nabla\varphi(x),d\rangle 
\end{equation}
holds whenever $\tau \in (0,\overline{\tau}]$ and $x+\tau d \in \Omega$, where
$$
\overline{\tau}:= \frac{2(\sigma -1)\langle \nabla \varphi(x),d\rangle}{L\|d\|^2}>0. 
$$
\end{Lemma}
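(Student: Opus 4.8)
The plan is to establish \eqref{est1} by the standard device of bounding the function increment along the ray $x + \tau d$ using the Lipschitz continuity of $\nabla\varphi$. First I would invoke the fundamental theorem of calculus together with the descent lemma (the quadratic upper bound for $\mathcal{C}^{1,1}$ functions): for $\tau$ small enough that the segment $[x, x+\tau d]$ lies in the open set $\Omega$, we have
$$
\varphi(x+\tau d) \le \varphi(x) + \tau\langle\nabla\varphi(x),d\rangle + \frac{L}{2}\tau^2\|d\|^2.
$$
This inequality is the local counterpart of the global descent lemma and follows by integrating $\langle\nabla\varphi(x+t d)-\nabla\varphi(x),d\rangle$ over $t\in[0,\tau]$ and applying the Lipschitz bound $\|\nabla\varphi(x+td)-\nabla\varphi(x)\|\le L\,t\|d\|$, valid because the whole segment stays in $\Omega$ by the hypothesis $x+\tau d\in\Omega$ and convexity of the segment.

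Next I would compare the right-hand side above with the target bound $\varphi(x)+\sigma\tau\langle\nabla\varphi(x),d\rangle$. Subtracting, it suffices to show
$$
\tau\langle\nabla\varphi(x),d\rangle + \frac{L}{2}\tau^2\|d\|^2 \le \sigma\tau\langle\nabla\varphi(x),d\rangle,
$$
i.e., after dividing by $\tau>0$, that $\frac{L}{2}\tau\|d\|^2 \le (\sigma-1)\langle\nabla\varphi(x),d\rangle$. Since $\langle\nabla\varphi(x),d\rangle<0$ and (for the intended application) $\sigma<1$, the right-hand side $(\sigma-1)\langle\nabla\varphi(x),d\rangle$ is positive, so this rearranges to exactly $\tau\le\overline\tau$ with
$$
\overline\tau = \frac{2(\sigma-1)\langle\nabla\varphi(x),d\rangle}{L\|d\|^2} > 0,
$$
matching the stated threshold. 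Thus \eqref{est1} holds for every $\tau\in(0,\overline\tau]$ with $x+\tau d\in\Omega$.

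I do not expect any serious obstacle here, since this is a routine one-variable calculus estimate; the only point requiring mild care is the \emph{local} character of the statement. Because $\Omega$ is merely open (not all of $\R^n$) and $\nabla\varphi$ is Lipschitz on $\Omega$, I must ensure that the entire segment joining $x$ to $x+\tau d$ remains inside $\Omega$ before applying the Lipschitz bound along it. This is guaranteed automatically for $\tau$ in the admissible range precisely by the hypothesis "$x+\tau d\in\Omega$" together with the convexity of $\Omega$ — or, if $\Omega$ is not assumed convex, by restricting to $\tau$ small enough that the segment lies in a convex neighborhood of $x$ contained in $\Omega$, which is where the genuinely local version departs slightly from \cite[Lemma~2.20]{Solo14}. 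Once the segment containment is secured, the descent-lemma integration and the elementary comparison above complete the argument, which is why the authors note that the proof mirrors the original and can be omitted.
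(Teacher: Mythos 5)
Your proof is correct and follows exactly the standard descent-lemma argument that the paper itself omits (Lemma~\ref{estimate1} is stated with the remark that its proof is similar to that of \cite[Lemma~2.20]{Solo14} and is therefore not given). Your two side observations are also the right ones: the asserted positivity of $\overline{\tau}$ implicitly forces $\sigma<1$ (consistent with the paper's use of $\sigma\in(0,1/2)$ despite the statement saying ``any $\sigma>0$''), and the integration step does require the whole segment $[x,x+\tau d]$ to lie in $\Omega$, which is the only point where the local version needs care beyond the original global lemma.
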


\color{black}
The next lemma provides conditions for the $R$-linear and $Q$-linear convergence of sequences.

\begin{Lemma}[\bf estimates for convergence rates]\label{QRlinear}  Let $\{\alpha_k\}, \{\beta_k \}$, and $\{\gamma_k \}$ be sequences of positive numbers. Assume that there exist numbers $c_i>0$, $i=1,2,3$, and $k_0\in \N$ such that for all $k \ge k_0$ we have the estimates:
\begin{itemize}
\item[\bf (i)] $\alpha_k - \alpha_{k+1} \ge c_1 \beta_k^2$.
\item[\bf (ii)] $\beta_k \geq c_2 \gamma_k$.
\item[\bf (iii)] $c_3 \gamma_k^2 \ge \alpha_k$.
\end{itemize}
Then the sequence $\{\alpha_k \}$ Q-linearly converges to zero, and the sequences $\{\beta_k \}$ and $\{\gamma_k \}$ R-linearly converge to zero as $k\to\infty$.
\end{Lemma}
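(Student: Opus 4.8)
The plan is to collapse the three one-sided estimates into a single scalar linear recursion for $\{\alpha_k\}$, from which the convergence of all three sequences follows. First I would note that (i) forces $\{\alpha_k\}$ to be nonincreasing for $k\ge k_0$, and being positive it converges to some $\alpha^*\ge 0$. To pin down the rate, I would feed (iii) and (ii) into (i): from (iii) we have $\gamma_k^2\ge\alpha_k/c_3$, and (ii) gives $\beta_k^2\ge c_2^2\gamma_k^2\ge (c_2^2/c_3)\alpha_k$; substituting into (i) yields $\alpha_k-\alpha_{k+1}\ge (c_1c_2^2/c_3)\alpha_k$, i.e. $\alpha_{k+1}\le q\alpha_k$ with $q:=1-c_1c_2^2/c_3$, valid for all $k\ge k_0$.

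Second, I would confirm $q\in(0,1)$. That $q<1$ is immediate since $c_1c_2^2/c_3>0$; the point worth recording is $q>0$, which is not assumed but forced by the data: because $\alpha_{k+1}>0$ and $\alpha_k>0$, the relation $\alpha_{k+1}\le q\alpha_k$ gives $0<\alpha_{k+1}/\alpha_k\le q$, so necessarily $q\in(0,1)$ (equivalently, the hypotheses are consistent only when $c_1c_2^2/c_3<1$). Iterating the recursion gives $\alpha_k\le q^{\,k-k_0}\alpha_{k_0}$ for $k\ge k_0$; passing to the limit in $\alpha^*\le q\alpha^*$ shows $\alpha^*=0$, and the bound $\alpha_{k+1}/\alpha_k\le q<1$ is precisely the Q-linear convergence of $\{\alpha_k\}$ to $0$ in the sense of Definition~\ref{def-rates}(ii).

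Third, for the R-linear rates I would use (i) and (ii) in the opposite direction, as upper bounds. Since $\alpha_k-\alpha_{k+1}\le\alpha_k$, estimate (i) gives $\beta_k^2\le\alpha_k/c_1$, and (ii) then gives $\gamma_k\le\beta_k/c_2\le (c_2\sqrt{c_1})^{-1}\sqrt{\alpha_k}$. Combining with $\alpha_k\le q^{\,k-k_0}\alpha_{k_0}=:Cq^k$ yields $\beta_k\le\sqrt{C/c_1}\,(\sqrt q)^k$ and $\gamma_k\le (c_2\sqrt{c_1})^{-1}\sqrt{C}\,(\sqrt q)^k$. Setting $\mu:=\sqrt q\in(0,1)$, these are exactly bounds of the form $\|x^k\|\le c\mu^k$, which is the R-linear convergence to $0$ required by Definition~\ref{def-rates}(i).

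The decisive trick, and the organizing idea of the whole argument, is to exploit estimate (i) twice with opposite orientations: as a lower bound on the decrement of $\alpha_k$ (to produce the geometric decay) and as an upper bound on $\beta_k$ (to transfer that decay to $\beta_k$ and then, through (ii), to $\gamma_k$), with (ii)--(iii) serving as relays between the three sequences. The only step requiring a moment's care — the main obstacle, such as it is — is verifying that the contraction factor $q$ genuinely lies in $(0,1)$, and in particular that it is positive; this is not an explicit hypothesis but is forced by the positivity of the sequences, and everything after it is routine bookkeeping.
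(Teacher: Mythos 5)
Your proposal is correct and follows essentially the same route as the paper's proof: chain (i)--(iii) to get $\alpha_{k+1}\le q\alpha_k$ with $q=1-c_1c_2^2/c_3$, then run (i) and (ii) backwards as upper bounds to transfer the geometric decay to $\{\beta_k\}$ and $\{\gamma_k\}$. Your explicit justification that $q>0$ is forced by the positivity of the sequences is a detail the paper silently assumes, and it is a welcome addition.
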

\begin{proof} Combining (i), (ii), and (iii) yields the inequalities
$$
\alpha_{k} - \alpha_{k+1} \ge c_1 \beta_k^2 \ge c_1 c_2^2 \gamma_k^2 \ge\frac{c_1c_2^2}{c_3}\alpha_{k} \quad \text{for all }\; k \geq k_0,
$$
which imply that $\alpha_{k+1}\le q \alpha_k$, where $q:= 1- (c_1c_2^2)/c_3\in (0,1)$. This verifies that the sequence $\{\alpha_{k}\}$ Q-linearly converges to zero. Using the latter and the assumed condition (i) ensures that
$$
\beta_k^2 \le\frac{1}{c_1}(\alpha_k - \alpha_{k+1}) \le\frac{1}{c_1}\alpha_k \leq \frac{1}{c_1}q\alpha_{k-1} \le \ldots\leq \frac{1}{c_1} q^k \alpha_0 \quad \text{for all }\; k\ge k_0,
$$
which tells us that $\beta_k \le c\mu^k$, where $c:= \sqrt{\alpha_0/c_1}$ and $\mu :=\sqrt{q}$. This justifies the $R$-linear convergence of the sequence $\{\beta_k \}$ to zero. Furthermore, it easily follows from (ii) that the sequence $\{\gamma_k \}$ $R$-linearly converge to zero, and thus we are done with the proof.
\end{proof}

Now we obtain a useful result on the semismooth$^*$ property of compositions.

\begin{Lemma}[\bf semismooth$^*$ property of composition mappings] \label{semismoothcomposite} Let $A \in \R^{n\times n}$ be a symmetric nonsingular matrix, $b \in \R^n$, $\bar{x}\in\R^n$, and $f:\R^n \to\R^n$ be continuous and semismooth$^*$ at $\bar{y}:=A\bar{x}+b$. Then the mapping $g: \R^n \to \R^n$ defined by $g(x):= f(Ax+b)$ is semismooth$^*$ at $\bar{x}$.
\end{Lemma}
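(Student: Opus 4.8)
The plan is to exploit the structure $g = f\circ L$ with $L(x):=Ax+b$ an affine \emph{bijection} (since $A$ is nonsingular), which lets me reduce the whole question to transporting the directional limiting normal cone \eqref{dir-nc} under a linear change of variables. Concretely, I would introduce the block-linear bijection $M:=\begin{pmatrix}A&0\\0&I\end{pmatrix}$ on $\R^n\times\R^n$ and record the exact graph identity
\[
\gph f = M\,\gph g + (b,0),
\]
which holds because $x\mapsto (Ax+b,\,f(Ax+b))$ traces out $\gph f$ as $x$ ranges over $\R^n$. Since $\oy=A\ox+b$ and $g(\ox)=f(\oy)$, the base point $(\ox,g(\ox))\in\gph g$ is mapped to $(\oy,f(\oy))\in\gph f$. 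The continuity of $f$ is used only to guarantee that $g$ is a genuine single-valued continuous map, so that the semismooth$^*$ framework of Definition~\ref{semi*} applies verbatim.

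The key technical ingredient is a change-of-variables rule for the directional limiting normal cone under a linear bijection: for any linear bijection $M$, closed set $\Omega$, point $\bar z\in\Omega$, and direction $d$,
\[
N_{M\Omega}(M\bar z;\,Md) = (M^*)^{-1}\,N_\Omega(\bar z;\,d).
\]
I would derive this straight from definition \eqref{dir-nc}: the regular normal cone obeys $\widehat N_{M\Omega}(Mz)=(M^*)^{-1}\widehat N_\Omega(z)$ for every $z$, and because $M$ is a homeomorphism the defining sequences $t_k\downarrow 0$, $d_k\to d$, $v_k\to v$ transform consistently, with translation invariance absorbing the shift by $(b,0)$. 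Specializing to our $M$, whose adjoint inverse is $(M^*)^{-1}=\begin{pmatrix}A^{-1}&0\\0&I\end{pmatrix}$ by the symmetry of $A$, yields
\[
N_{\gph f}\big((\oy,f(\oy));(Au,v)\big)=(M^*)^{-1}\,N_{\gph g}\big((\ox,g(\ox));(u,v)\big)
\]
for every direction $(u,v)\in\R^n\times\R^n$.

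With this rule the verification of Definition~\ref{semi*} for $g$ is short. Fix $(u,v)$ and take any $(u^*,-v^*)\in N_{\gph g}((\ox,g(\ox));(u,v))$, i.e.\ $u^*\in D^*g((\ox,g(\ox));(u,v))(v^*)$ by \eqref{dir-cod}. The displayed transformation gives $(A^{-1}u^*,-v^*)\in N_{\gph f}((\oy,f(\oy));(Au,v))$, so $A^{-1}u^*\in D^*f((\oy,f(\oy));(Au,v))(v^*)$. Applying the semismoothness$^*$ of $f$ at $\oy$ in the direction $(Au,v)$ produces $\langle A^{-1}u^*,Au\rangle=\langle v^*,v\rangle$, and here the symmetry of $A$ enters decisively: $\langle A^{-1}u^*,Au\rangle=\langle u^*,(A^*)^{-1}Au\rangle=\langle u^*,u\rangle$ precisely because $(A^*)^{-1}=A^{-1}$. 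Hence $\langle u^*,u\rangle=\langle v^*,v\rangle$, which is exactly the semismooth$^*$ identity for $g$ at $\ox$.

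The main obstacle I anticipate is not the closing algebra but the rigorous justification of the directional normal-cone transformation rule, since the cone \eqref{dir-nc} is built from a nested limiting process and one must check that the linear bijection commutes with it cleanly, with the correct appearance of $(M^*)^{-1}$ rather than $M^*$. Once that rule is secured, the symmetry of $A$ is exactly what converts the intermediate identity $\langle A^{-1}u^*,Au\rangle=\langle v^*,v\rangle$ into $\langle u^*,u\rangle=\langle v^*,v\rangle$, which also makes transparent why the hypothesis that $A$ be symmetric cannot be dropped.
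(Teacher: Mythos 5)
Your proof is correct, but it takes a genuinely different route from the paper's. The paper proves the lemma analytically: it first invokes the coderivative chain rule $D^*g(x)(w)=A^*D^*f(Ax+b)(w)$ and then works with the quantitative $\epsilon$--$\delta$ characterization of semismoothness$^*$ for continuous single-valued maps (the inequality $|\langle x^*,y-\oy\rangle-\langle y^*,f(y)-f(\oy)\rangle|\le\frac{\epsilon}{\mu}\|(y-\oy,f(y)-f(\oy))\|\cdot\|(x^*,y^*)\|$ over all $y$ near $\oy$ and all $(x^*,y^*)\in\gph D^*f(y)$), transferring this estimate from $f$ to $g$ with a constant $\mu$ built from $\|A\|$ and $\|A^{-1}\|$; this is where the continuity of $f$ is genuinely used, since that characterization is stated for continuous maps. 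You instead verify Definition~\ref{semi*} directly: the graph identity $\gph f=M\,\gph g+(b,0)$ with $M=\mathrm{diag}(A,I)$, the change-of-variables rule $N_{M\Omega}(M\bar z;Md)=(M^*)^{-1}N_\Omega(\bar z;d)$ for directional limiting normal cones under a linear bijection (which indeed follows cleanly from \eqref{dir-nc} since $\widehat N_{M\Omega}(Mz)=(M^*)^{-1}\widehat N_\Omega(z)$ and $M$ is a homeomorphism), and then the one-line algebra closing the semismooth$^*$ identity. Your route is purely qualitative, avoids both the chain rule and the $\epsilon$--$\delta$ characterization, and makes the geometric content transparent; the paper's route avoids handling the nested directional limit and adapts more readily to situations where the inner map is not invertible (at the price of a chain rule with qualification conditions). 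One small correction to your closing remark: symmetry of $A$ is \emph{not} essential. Without it you get $(M^*)^{-1}=\mathrm{diag}((A^*)^{-1},I)$, the semismooth$^*$ identity for $f$ in the direction $(Au,v)$ reads $\langle(A^*)^{-1}u^*,Au\rangle=\langle v^*,v\rangle$, and $\langle(A^*)^{-1}u^*,Au\rangle=\langle u^*,A^{-1}Au\rangle=\langle u^*,u\rangle$ for any nonsingular $A$; the hypothesis is stated with $A$ symmetric only because that is what the application to $B=I-\gamma A$ requires.
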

\begin{proof} Using the coderivative chain rule from \cite[Theorem~1.66]{Mordukhovich06}, we get
\begin{equation}\label{chainrule}
D^*g(x)(w)= A^* D^*f(Ax+b)(w) \quad \text{for all }\; w \in \R^n.
\end{equation}
Denote $\mu:= \sqrt{\text{\rm max}\{1,\|A\|^2 \}\cdot\text{\rm max}\{1,\|A^{-1}\|^2\}}>0$. Picking any $\epsilon>0$ and employing the semismooth$^*$ property of $f$ at $\bar{y}$, we find $\delta>0$ such that
\begin{equation}\label{semismF}
|\langle x^*, y-\bar{y}\rangle - \langle y^*, f(y) - f(\bar{y})\rangle |\le\frac{\epsilon}{\mu} \big\| (y-\bar{y},f(y)-f(\bar{y}))\big\|\cdot\|(x^*,y^*)\|
\end{equation}
for all $y \in \mathbb{B}_\delta(\bar{y})$ and all $(x^*,y^*) \in\gph D^*f(y)$. Denoting $r:= \delta/ \|A\|>0$ gives us $y:=Ax+b \in \mathbb{B}_\delta(\bar{y})$ whenever $x \in \mathbb{B}_r(\bar{x})$. Picking now $x \in \mathbb{B}_r(\bar{x})$ and $(z^*,w^*) \in\gph D^*g(x)$, we get $(A^{-1}z^*,w^*)\in\gph D^*f(Ax+b)$ due to \eqref{chainrule}. It follows from \eqref{semismF} that
\begin{eqnarray*}
|\langle z^*, x-\bar{x}\rangle - \langle w^*,g(x) - g(\bar{x})\rangle | &=&|\langle A^{-1}z^*, y-\bar{y}\rangle - \langle w^*, f(y) - f(\bar{y})\rangle | \\
&\le& \frac{\epsilon}{\mu}\big\| (y-\bar{y},f(y)-f(\bar{y}))\|\cdot\big\|(A^{-1}z^*,w^*)\big\|\\
&=& \frac{\epsilon}{\mu}\big\|(Ax-A\bar{x},g(x)-g(\bar{x}))\big\|\cdot\big\|(A^{-1}z^*,w^*)\big\|\\
& \le & \epsilon \| (x- \bar{x},g(x)-g(\bar{x}))\|\cdot\|(z^*,w^*)\|,
\end{eqnarray*}
which verifies the semismooth$^*$ property of $g$ at $\bar{x}$.
\end{proof}

The final lemma establishes tilt stability of strongly convex functions at stationary points.

\begin{Lemma}[\bf strong convexity and tilt-stability]\label{strongtilt} Let $\varphi:\R^n\to\overline{\R}$ be an l.s.c. and strongly convex function with modulus $\kappa>0$, and let $\bar{x}\in\dom\varphi$ such that $0\in \partial\varphi(\bar{x})$. Then $\bar{x}$ is a tilt-stable local minimizer of $\varphi$ with modulus $\kappa^{-1}$.
\end{Lemma}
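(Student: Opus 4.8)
The plan is to reduce everything to the single-valued Lipschitzian behavior of the inverse subgradient mapping $(\partial\varphi)^{-1}$, which strong convexity forces. First I would record that for any tilt vector $v\in\R^n$ the tilted objective $\varphi_v:=\varphi-\langle v,\cdot\rangle$ is again l.s.c.\ and strongly convex with the same modulus $\kappa$, since the linear perturbation does not affect the convexity of $\varphi-(\kappa/2)\|\cdot\|^2$. Consequently $\varphi_v$ is coercive and strictly convex, so it possesses a unique global minimizer, which I denote $x_v$; by the convex subdifferential Fermat rule this point is characterized by $v\in\partial\varphi(x_v)$, i.e.\ $x_v=(\partial\varphi)^{-1}(v)$, and the hypothesis $0\in\partial\varphi(\ox)$ gives $x_0=\ox$.

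The key quantitative step is to show that $(\partial\varphi)^{-1}$ is single-valued and globally Lipschitz with constant $\kappa^{-1}$. Writing $\psi:=\varphi-(\kappa/2)\|\cdot\|^2$, the convex sum rule yields $\partial\varphi(x)=\partial\psi(x)+\kappa x$, so for $v_i\in\partial\varphi(x_i)$ with $i=1,2$ the monotonicity of $\partial\psi$ gives $\langle v_1-v_2,x_1-x_2\rangle\ge\kappa\|x_1-x_2\|^2$; combined with the Cauchy--Schwarz inequality this produces $\|x_1-x_2\|\le\kappa^{-1}\|v_1-v_2\|$. In particular the map $v\mapsto x_v$ is well-defined on all of $\R^n$ and $\kappa^{-1}$-Lipschitz.

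It then remains to identify the constrained localized minimizer $M_\gamma(v)$ appearing in Definition~\ref{def:tilt} with the global minimizer $x_v$ for small $v$. Fix any $\gamma>0$. Since $x_0=\ox$ and $v\mapsto x_v$ is continuous, I would choose a neighborhood $V$ of the origin so small that $x_v\in\inte\B_\gamma(\ox)$ for all $v\in V$. For such $v$, the point $x_v$ is the unique global minimizer of the convex function $\varphi_v$ and lies in the interior of the closed convex set $\B_\gamma(\ox)$; hence it is also the unique minimizer of $\varphi_v$ over $\B_\gamma(\ox)$, that is $M_\gamma(v)=\{x_v\}$. Thus on $V$ the mapping $M_\gamma$ is single-valued, equals the restriction of $(\partial\varphi)^{-1}$, satisfies $M_\gamma(0)=\{\ox\}$, and is Lipschitz with constant $\kappa^{-1}$, which is precisely the assertion that $\ox$ is a tilt-stable local minimizer of $\varphi$ with modulus $\kappa^{-1}$.

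I expect the main obstacle to be this last identification step: one must argue carefully that restricting the minimization to $\B_\gamma(\ox)$ does not change the minimizer once $x_v$ is interior, which relies on convexity (any constrained minimizer of a convex function whose unconstrained minimizer is feasible must coincide with it) rather than on any local second-order information. The strong-monotonicity estimate itself is routine, but some care is needed to justify that $(\partial\varphi)^{-1}$ is defined on a full neighborhood of $0$, which I would obtain from the coercivity of the strongly convex tilted functionals $\varphi_v$.
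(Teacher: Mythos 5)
Your proposal is correct, but it proves the lemma by a genuinely different route than the paper. The paper's proof is a two-line appeal to cited characterization theorems: strong convexity with modulus $\kappa$ yields, via the second-order characterization of strong convexity in \cite[Theorem~5.1]{ChieuChuongYaoYen}, the uniform positive-definiteness estimate $\langle z,w\rangle\ge\kappa\|w\|^2$ for all $z\in\partial^2\varphi(x,y)(w)$ on $\gph\partial\varphi$, and then the second-order characterization of tilt stability from \cite[Theorem~3.5]{MorduNghia} converts this directly into tilt stability with modulus $\kappa^{-1}$. You instead verify Definition~\ref{def:tilt} from first principles: the tilted functions $\varphi-\langle v,\cdot\rangle$ are strongly convex and coercive, hence have unique global minimizers $x_v=(\partial\varphi)^{-1}(v)$; the strong monotonicity inequality $\langle v_1-v_2,x_1-x_2\rangle\ge\kappa\|x_1-x_2\|^2$ obtained from the decomposition $\partial\varphi=\partial\psi+\kappa I$ makes $v\mapsto x_v$ single-valued and globally $\kappa^{-1}$-Lipschitz; and the identification $M_\gamma(v)=\{x_v\}$ for $v$ near $0$ follows since a feasible global minimizer of a convex function is its unique constrained minimizer (your worry about this last step is unfounded --- once $x_v\in\B_\gamma(\ox)$, any $y\in M_\gamma(v)$ satisfies $\varphi_v(y)\le\varphi_v(x_v)\le\varphi_v(y)$, so $y=x_v$ by strict convexity, and interiority is not even needed). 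What each approach buys: the paper's argument is shorter and fits its general second-order framework, but rests on two nontrivial external results; yours is elementary, self-contained, and actually yields the stronger conclusion that $(\partial\varphi)^{-1}$ is single-valued and globally Lipschitz with constant $\kappa^{-1}$ on all of $\R^n$, making the source of the modulus transparent.
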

\begin{proof} By the second-order characterization of strongly convex functions \cite[Theorem~5.1]{ChieuChuongYaoYen}, we have
$$
\langle z, w\rangle \ge\kappa \|w\|^2 \quad \text{for all }\; z \in \partial^2\varphi(x,y)(w),\; (x,y) \in\gph\partial\varphi,\;\mbox{and }\;w \in \R^n.
$$
This implies in turn that $\bar{x}$ is a tilt-stable local minimizer with modulus $\kappa^{-1}$ by second-order characterization of tilt stability taken from \cite[Theorem 3.5]{MorduNghia}.
\end{proof}

\section*{Funding and Conflicts of Interests} 

Research of Pham Duy Khanh is funded by the Ministry of Education and Training Research Funding under grant B2023-SPS-02. Research of Boris S. Mordukhovich was partly supported by the US National Science Foundation under grants DMS-1808978 and DMS-2204519, by the US Air Force Office of Scientific Research under grant \#15RT0462, and by the Australian Research Council under Discovery Project DP-190100555. Research of Vo Thanh Phat was partly supported by the US National Science Foundation under grants DMS-1808978 and DMS-2204519, and by the US Air Force Office of Scientific Research under grant \#15RT0462. Research of Dat Ba Tran was partly supported by the US National Science Foundation under grant DMS-1808978 and DMS-2204519.
The authors declare that the presented results are new, and there is no any conflict of interest.

\color{black}

\section*{Acknowledgements} The authors are very grateful to three anonymous referees for their helpful remarks and suggestions, which allowed us to significantly improve the original presentation.\vspace*{-0.1in}

\small

\end{document}